\begin{document}

\def\COMMENT#1{}
\let\COMMENT=\footnote

\newcommand{\case}[1]{\medskip\noindent{\bf Case #1} }
\newcommand{\step}[1]{\medskip\noindent{\bf Step #1} }

\newtheorem{theorem}{Theorem}
\newtheorem{problem}[theorem]{Problem}
\newtheorem{lemma}[theorem]{Lemma}
\newtheorem{proposition}[theorem]{Proposition}
\newtheorem{corollary}[theorem]{Corollary}
\newtheorem{conjecture}[theorem]{Conjecture}
\newtheorem{claim}[theorem]{Claim}
\newtheorem{definition}[theorem]{Definition}
\newtheorem*{definition*}{Definition}
\newtheorem{fact}[theorem]{Fact}
\newtheorem{observation}[theorem]{Observation}
\newtheorem{question}[theorem]{Question}
\newtheorem{remark}[theorem]{Remark}

\numberwithin{equation}{section}
\numberwithin{theorem}{section}

\def\eps{{\varepsilon}}
\renewcommand{\epsilon}{\varepsilon}
\newcommand{\cP}{\mathcal{P}}
\newcommand{\cG}{\mathcal{G}}
\newcommand{\cT}{\mathcal{T}}
\newcommand{\cL}{\mathcal{L}}
\newcommand\ex{\ensuremath{\mathrm{ex}}}
\newcommand{\eul}{e}
\newcommand{\pr}{\mathbb{P}}
\newcommand{\expect}{\mathbb{E}}
\newcommand{\phiind}{\phi^{\rm ind}}

\title[On the rational Tur\'an exponents conjecture]{On the rational Tur\'an exponents conjecture}

\author{Dong Yeap Kang, Jaehoon Kim, and Hong Liu}
\address[Dong Yeap Kang]{Department of Mathematical Sciences, KAIST, South Korea}
\address[Jaehoon Kim]{Mathematics Institute, University of Warwick, United Kingdom}
\address[Hong Liu]{Mathematics Institute, University of Warwick, United Kingdom}
\email{dyk90@kaist.ac.kr}
\email{jaehoon.kim.1@warwick.ac.uk, mutualteon@gmail.com}
\email{h.liu.9@warwick.ac.uk}

\thanks{The first author was supported by the National Research Foundation of Korea (NRF) grant funded by the Korea government (MSIT) (No. NRF-2017R1A2B4005020) and also by TJ Park Science Fellowship of POSCO TJ Park Foundation. (D.~Kang). 
The second author  was supported  by the Leverhulme Trust Early Career Fellowship ECF-2018-538 (J.~Kim). 
The third author was supported  by the Leverhulme Trust Early Career Fellowship ECF-2016-52 (H.~Liu). }

\begin{abstract}
The extremal number $\ex(n,F)$ of a graph $F$ is the maximum number of edges in an $n$-vertex graph not containing $F$ as a subgraph. A real number $r \in [1,2]$ is realisable if there exists a graph $F$ with $\ex(n , F) = \Theta(n^r)$. Several decades ago, Erd\H{o}s and Simonovits conjectured that every rational number in $[1,2]$ is realisable. Despite decades of effort, the only known realisable numbers are $0,1, \frac{7}{5}, 2$, and the numbers of the form $1+\frac{1}{m}$, $2-\frac{1}{m}$, $2-\frac{2}{m}$ for integers $m \geq 1$. In particular, it is not even known whether the set of all realisable numbers contains a single limit point other than two numbers $1$ and $2$. 

In this paper, we make progress on the conjecture of Erd\H{o}s and Simonovits. First, we show that $2 - \frac{a}{b}$ is realisable for any integers $a,b \geq 1$ with $b>a$ and $b \equiv \pm 1 ~({\rm mod}\:a)$. This includes all previously known ones, and gives infinitely many limit points $2-\frac{1}{m}$ in the set of all realisable numbers as a consequence.

Secondly, we propose a conjecture on subdivisions of bipartite graphs. Apart from being interesting on its own, we show that, somewhat surprisingly, this subdivision conjecture in fact implies that every rational number between 1 and 2 is realisable.
\end{abstract}

\date{\today}
\maketitle

\section{Introduction}

\subsection{History and previous results}
 For a family of graphs $\mathcal{F}$, \emph{the extremal number} $\ex(n,\mathcal{F})$ is the maximum number of edges in an $n$-vertex graph which does not contain any subgraph isomorphic to a graph in $\mathcal{F}$. If $\mathcal{F}=\{F\}$, then we write $\ex(n,F)$ instead of $\ex(n,\mathcal{F})$. 

Since Mantel~\cite{mantel1907} determined the extremal number of a triangle in 1907, the study on the extremal number has been always at the core of extremal graph theory.
The classical Erd\H{o}s-Stone-Simonovits theorem~\cite{erdos-simonovits1966, erdos1946} showed that any $k$-chromatic graph $F$ satisfies
$$\ex(n,F) = \left (1-\frac{1}{k-1} + o(1) \right ) \binom{n}{2}.$$

While this provides good estimates for the extremal numbers of non-bipartite graphs, it only shows $\ex(n , F) = o(n^2)$ for any bipartite graph $F$.
Although there have been numerous attempts on finding better bounds of $\ex(n , F)$ for various bipartite graphs $F$, we know very little on the topic.  One of the fundamental conjectures on the subject is the following conjecture proposed by Erd\H{o}s and Simonovits.

\begin{conjecture}[Erd\H{o}s and Simonovits~\cite{erdos1981}]\label{conj:es'}
For every rational number $r \in [1,2]$, there exists a finite family $\mathcal{F}$ of graphs with $\ex(n , \mathcal{F}) = (c_{\mathcal{F}} + o(1)) n^r$ for some real number $c_{\mathcal{F}} > 0$.
\end{conjecture}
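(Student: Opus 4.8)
The plan is to upgrade the known realisability of every rational exponent --- i.e.\ that each rational $r\in[1,2]$ satisfies $\ex(n,\mathcal{F})=\Theta(n^{r})$ for some finite family $\mathcal{F}$, which follows from the subdivision conjecture proposed in this paper and is in any case known unconditionally by a theorem of Bukh and Conlon --- to the sharp form $(c_{\mathcal{F}}+o(1))n^{r}$ demanded by Conjecture~\ref{conj:es'}. Fix $r=1+\tfrac{a}{b}\in[1,2]$ with $0\le a\le b$ and $\gcd(a,b)=1$. The first step is to choose the right finite family $\mathcal{F}=\mathcal{F}_r$: I would take it to consist of balanced subdivisions of a carefully chosen bipartite graph, following the subdivision conjecture proposed here. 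Subdividing shifts the Tur\'an exponent in a controlled, predictable way, and using a finite \emph{family} rather than a single graph buys enough forbidden configurations that, up to lower-order terms, $\mathcal{F}_r$-freeness becomes equivalent to a rigid ``partial-design'' constraint on codegrees (and higher multiplicities) --- rigid enough that a convexity argument will be asymptotically tight --- while still allowing an explicit algebraic construction to be certified $\mathcal{F}_r$-free. The normalisation can be arranged so that the target constant is $c_{\mathcal{F}_r}=\tfrac12$ (or, more generally, an explicit $\tfrac12\lambda^{1/s}$ for a design-density parameter $\lambda$ and exponent $s$ read off from the subdivision).

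For the lower bound I would use an algebraic construction over a finite field $\mathbb{F}_q$ with $n=(1+o(1))q^{b}$ vertices in which almost every vertex has degree $(1+o(1))q^{a}$; then $e(G)=(\tfrac12+o(1))nq^{a}=(\tfrac12+o(1))n^{1+a/b}$, which fixes the candidate value of $c_{\mathcal{F}_r}$. Either one takes an explicit incidence structure --- the incidence graph of a partial Steiner system, or a norm-graph-type construction --- whose regularity delivers the $(1+o(1))$ directly, or one derandomises a random polynomial construction so that its edge count concentrates tightly around its mean; $\mathcal{F}_r$-freeness is then certified by a Lang--Weil/B\'ezout-type estimate showing the relevant polynomial systems have only the trivial number of solutions over $\mathbb{F}_q$.

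For the matching upper bound I would run a convexity (Jensen) argument on the degree sequence of an $\mathcal{F}_r$-free graph $G$ with $m$ edges, counting a suitable family $N(G)$ of rooted configurations --- the skeletons of the subdivided graphs in $\mathcal{F}_r$. For explicit integers $t,t'$ depending only on $\mathcal{F}_r$, forbidding $\mathcal{F}_r$ caps $N(G)\le(1+o(1))n^{t}$, while convexity forces $N(G)\ge(1-o(1))(2m/n)^{t'}n$ with $2m/n$ the average degree; balancing the two yields $m\le(c+o(1))n^{r}$. For the constant $c$ obtained here to coincide with the one from the construction, $\mathcal{F}_r$ must be chosen so that the forbidding inequality is tight at the construction; since naive counting generally over-counts, this has to be repaired by a stability step --- any $\mathcal{F}_r$-free graph with $(c-o(1))n^{r}$ edges should, after deleting $o(n^{r})$ edges, carry the incidence structure of the lower-bound construction --- which is where a supersaturation or hypergraph-container argument enters and pins down the constant on the forbidding side.

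The main obstacle is exactly this constant-matching. In every confirmed instance of Conjecture~\ref{conj:es'} the sharp construction is something exceptionally symmetric --- a projective plane, a generalised quadrangle or hexagon, a norm graph --- and such objects exist only for a handful of parameters, which is precisely why the precise form of the conjecture has so far resisted. Producing a general subdivision-based construction that is ``as regular as a generalised polygon'' for \emph{every} denominator $b$, or else proving that a random construction concentrates sharply enough to recover a tight leading constant while a container bound meets it from above, is the crux; I expect the stability/uniqueness statement for $\mathcal{F}_r$-free graphs to be the single hardest ingredient, since it must hold uniformly over all rationals $r$ at once.
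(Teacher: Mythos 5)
The statement you are trying to prove is not a theorem of this paper at all: it is Conjecture~\ref{conj:es'}, the original Erd\H{o}s--Simonovits conjecture, which the paper records as an \emph{open problem} and uses only as motivation. The paper proves nothing in its direction beyond citing the Bukh--Conlon result (Theorem~\ref{thm:BC}), which establishes only the order of magnitude $\ex(n,\mathcal{F})=\Theta(n^r)$ for a finite family, not the asymptotically sharp form $(c_{\mathcal{F}}+o(1))n^r$ with a genuine limiting constant. So there is no paper proof to compare against, and your write-up cannot be judged ``correct'': it is a research programme for an open conjecture, not a proof.

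Your own text concedes the decisive gaps. The passage from $\Theta(n^r)$ to $(c_{\mathcal{F}}+o(1))n^r$ requires (i) a construction whose edge count concentrates at the level of the leading constant for \emph{every} rational exponent, and (ii) an upper-bound argument whose constant provably matches it, which you correctly identify as needing a stability/uniqueness statement for near-extremal $\mathcal{F}_r$-free graphs. Neither ingredient is supplied: the Bukh random-algebraic construction is only known to give the right order of magnitude (its edge count is not known to concentrate to a constant times $n^r$ after the deletion step that removes copies of the forbidden graphs), the highly regular incidence geometries you invoke exist only for a handful of parameters, and no supersaturation or container argument pinning down a tight leading constant is known for general subdivided families. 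Phrases such as ``it is plausible that'', ``can be arranged'', and ``I expect the stability statement to be the single hardest ingredient'' mark exactly where a proof would have to begin. As it stands, the proposal establishes nothing beyond what Theorem~\ref{thm:BC} already gives.
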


Many authors (see~\cite[Conjecture 5.1]{frankl86} and~\cite[Conjecture 2.37]{furedi2013}) stated a weaker version of Conjecture~\ref{conj:es'} that for every rational number $r \in [1,2]$, there exists a finite family $\mathcal{F}$ of graphs with $\ex(n , \mathcal{F}) = \Theta(n^r)$. In a recent breakthrough, this has been verified by Bukh and Conlon~\cite{bukh2018} using the random algebraic construction introduced by Bukh~\cite{bukh2015}.

\begin{theorem}[Bukh and Conlon \cite{bukh2018}]\label{thm:BC}
For every rational number $r \in [1,2]$, there exists a finite collection $\mathcal{F}$ of graphs with $\ex(n,\mathcal{F}) = \Theta(n^{r})$.
\end{theorem}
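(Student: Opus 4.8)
The plan is to follow the random polynomial method of Bukh. One first reduces to the case $r = 2 - a/b$ for coprime integers $1 \le a < b$; the endpoint $r = 2$ is realised by $\mathcal{F} = \{K_3\}$ and $r = 1$ by $\mathcal{F} = \{P_3\}$, and every $r \in (1,2)$ admits such a representation.

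For the \textbf{lower bound} $\ex(n,\mathcal{F}) = \Omega(n^{r})$, build a bipartite graph $G$ as follows. Let $q$ be a large prime power and set $d := b$ and $s := a$. Take $V_1 = V_2 = \mathbb{F}_q^{d}$, choose polynomials $P_1,\dots,P_s \in \mathbb{F}_q[x_1,\dots,x_d,y_1,\dots,y_d]$ independently and uniformly at random among those of degree at most a large constant $D = D(a,b)$, and join $x \in V_1$ to $y \in V_2$ precisely when $P_1(x,y) = \dots = P_s(x,y) = 0$. Then $n := |V_1| + |V_2| = 2q^{d}$, and a routine first- and second-moment computation shows that with probability bounded away from $0$ one has $e(G) = \Theta(q^{2d-s}) = \Theta(n^{2-s/d}) = \Theta(n^{r})$. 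The key structural input is Bukh's \emph{codegree dichotomy}: for a suitable $D$, with high probability every vertex set $U$ with $|U| \le k_0 = k_0(a,b)$ has $|N_G(U)|$ either at most a constant $C = C(a,b)$, or at least $(1-o(1))\,q^{\,d-s|U|}$, with the second alternative available only when $s|U| < d$. This holds because $N_G(U)$ is the set of $\mathbb{F}_q$-points of the variety cut out by $s|U|$ polynomial equations of bounded degree, which by a Lang--Weil estimate has roughly the generic count $q^{\,d-s|U|}$ unless it is lower-dimensional, and the randomness of the $P_i$ rules out the intermediate regime.

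The combinatorial heart is the choice of the finite family $\mathcal{F} = \mathcal{F}(a,b)$. One builds a \emph{balanced rooted tree} $\cT$: a tree with a distinguished independent ``root'' set $R$ chosen so that the density $e(\cT)/(v(\cT)-|R|)$ equals $b/a$ while no subtree of $\cT$ containing $R$ has strictly larger density (for instance, when $a = b-1$ and $|R| = 2$ one may take $\cT$ to be a path of length $b$ with its two ends as roots, and the corresponding bundle below is a generalised theta graph). The family $\mathcal{F}$ consists of a $\cT$-\emph{bundle} $B$ --- a bounded number $\kappa = \kappa(a,b)$ of copies of $\cT$ glued along $R$ --- together with finitely many explicit ``degenerate'' variants of $B$ obtained by identifying some non-root vertices. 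Two things are then checked. (i) \emph{Upper bound} $\ex(n,\mathcal{F}) = O(n^{r})$: a supersaturation/embedding argument shows that, after passing to an almost-regular subgraph, a graph with $\ge C' n^{r}$ edges contains a ``controlled'' homomorphic image of $B$, hence a member of $\mathcal{F}$ --- the exponent $2 - a/b$ being exactly the threshold forced by the density $b/a$ of $\cT$. (ii) \emph{$G$ is $\mathcal{F}$-free}: one embeds $\cT$ outward from its roots, at each stage tracking the set $N_G(U)$ onto which the current frontier maps; balancedness forces every such set into the generic branch of the codegree dichotomy, and $e(\cT)/(v(\cT)-|R|) = b/a$ makes the product of these sizes along $\cT$ collapse to $O(1)$, so for $\kappa$ large enough a bundle of $\kappa$ copies (and its degenerate variants) cannot fit, while any attempt to evade this forces a common neighbourhood of size strictly between $C$ and $q^{\,d-s|U|}$, contradicting the dichotomy.

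The main obstacle is the interface between the algebra and the combinatorics: establishing the codegree dichotomy with explicit control of the constant $C$ and of the range $k_0$ (Bukh's lemma, via Lang--Weil together with a genericity argument for random bounded-degree polynomials), and then engineering the balanced tree $\cT$, the allowed collapses, and the bundle size $\kappa$ so that the embedding obstruction in (ii) triggers at precisely the exponent matching the supersaturation bound in (i). The remaining ingredients --- the moment estimates for $e(G)$, the endpoint cases, and the bipartite bookkeeping --- are routine.
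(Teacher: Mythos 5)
Theorem~\ref{thm:BC} is quoted in the paper without proof: it is cited verbatim from Bukh and Conlon~\cite{bukh2018}, so there is no ``paper's own proof'' to compare against. Your sketch is a reasonable high-level outline of the argument in~\cite{bukh2018}: reduce to $r=2-a/b$, use the random algebraic bipartite graph on $\mathbb{F}_q^b\times\mathbb{F}_q^b$ cut out by $a$ random bounded-degree polynomials for the lower bound, prove a codegree dichotomy via Lang--Weil, take a balanced rooted tree $T_{a,b}$ of density $b/a$, form the bundle $T_{a,b}^{\kappa}$ together with its degenerate homomorphic images as the family $\mathcal{F}$, and close the loop with a supersaturation upper bound. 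This matches the paper's usage of the result (see Lemma~\ref{thm:lowerbound} and the tree $T_{a,b}$ defined in Section~\ref{subsec:balanced}).

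Two small imprecisions worth flagging, though neither is fatal at sketch level. First, the codegree dichotomy is not quite ``$|N_G(U)|\le C$ or $|N_G(U)|\ge(1-o(1))\,q^{\,d-s|U|}$''; the large branch is driven by the dichotomy between the common-neighbourhood variety having dimension $0$ (hence $O(1)$ points, with the constant controlled by the degree) versus dimension $\ge 1$ (hence $\gtrsim q$ points by Lang--Weil), and one does not get the full generic codimension for free. Second, in the step showing $G$ is $\mathcal{F}$-free, the argument is not that each frontier lands in the generic branch; rather, balancedness together with the dichotomy shows that for any fixed tuple of root images, the number of rooted extensions to a copy of $T_{a,b}$ is uniformly bounded by some $M=M(a,b,D)$, so a bundle of $\kappa>M$ copies glued on the same roots (or any allowed degeneration of it) cannot embed. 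You would also need to address that the random graph may contain a few copies of the forbidden configurations, which are then destroyed by deleting $o(n^r)$ edges. These are exactly the points where the real work in~\cite{bukh2018} lies, so if you were to write this out in full you should consult that paper for the precise statements.
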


Recently, Fitch~\cite{2016fitch} showed that for any integer $k \geq 2$ and rational number $1 \leq r \leq k$, there exists a finite family $\mathcal{F}_k$ of $k$-uniform hypergraphs with $\ex(n , \mathcal{F}_k) = \Theta(n^r)$, extending Theorem~\ref{thm:BC} to uniform hypergraphs.





As a strengthening of Conjecture~\ref{conj:es'}, Erd\H{o}s and Simonovits (see~\cite[Section 8]{erdos1988}) also conjectured that for every rational number $r \in [1,2]$, there exists a graph $F$ with $\ex(n , F) = (c_F + o(1)) n^r$ for some real number $c_F > 0$. We state a slightly weaker version of their conjecture, which is our main interest.

\begin{conjecture}[Rational exponents conjecture~\cite{erdos1988}]\label{conj:es}
For every rational number $r \in [1,2]$, there exists a graph $F$ with $\ex(n , F) = \Theta(n^r)$.
\end{conjecture}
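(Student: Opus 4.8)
Conjecture~\ref{conj:es} is a long-standing open problem, so the realistic plan is not to settle it but to (i) enlarge the set of rationals that are provably realisable, and (ii) isolate a clean combinatorial statement that would imply the whole conjecture. For (i) I would try to realise every $r = 2 - \tfrac ab$ with $b > a \geq 1$ and $b \equiv \pm 1 \pmod{a}$ by an explicit graph $F = F(a,b)$ with $\ex(n,F) = \Theta(n^r)$; for (ii) I would formulate a conjecture on Tur\'an exponents of subdivisions of bipartite graphs and deduce Conjecture~\ref{conj:es} from it.

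For the explicit realisations I would work inside the rooted-graph framework behind Theorem~\ref{thm:BC}. Gluing $t$ copies of a rooted graph $\Gamma$ with root set $R$ along $R$ yields a graph whose Tur\'an exponent, when $\Gamma$ is \emph{balanced} and $t$ is large, is expected to be $2 - 1/\rho(\Gamma)$ with $\rho(\Gamma) = e(\Gamma)/(v(\Gamma)-|R|)$. Both the theta graph $\Theta_{k,t}$ (a path of length $k$, so $\rho = \tfrac{k}{k-1}$, giving $1+\tfrac1k$) and $K_{s,t}$ (a star, so $\rho = s$, giving $2-\tfrac1s$) arise this way, so the task is to produce, for the given $a,b$, a balanced rooted graph with $\rho(\Gamma) = \tfrac ba$ for which the glued graph admits a matching upper bound. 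The lower bound $\ex(n,F) = \Omega(n^r)$ should come from Bukh's random algebraic construction once balancedness and the relevant non-degeneracy are checked; the upper bound $\ex(n,F) = O(n^r)$ is the substantive part, to be proved by a dependent-random-choice / path-counting argument that uses the structure of $\Gamma$ to embed $F$ in any host graph with $\gg n^r$ edges. The role of the congruence $b \equiv \pm 1 \pmod{a}$ is, I expect, exactly to make such a $\Gamma$ available: it lets one realise the density $\tfrac ba$ by a tree-like gadget built from a suitable subdivision of a complete bipartite graph, with the ``$\pm1$'' absorbed by adding or deleting one short pendant path.

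For the reduction in (ii) I would conjecture that a sufficiently ``thickened'' subdivision of a bipartite graph $H$ — many internally disjoint subdivided copies glued along a common part, in the spirit of how the theta graphs $\Theta_{k,t}$ improve on $C_{2k}$ — has Tur\'an exponent equal to the value predicted by the natural density count on $H$ and the subdivision lengths. Granting this, for an arbitrary rational $r \in [1,2]$ I would choose the host bipartite graph $H$ and the subdivision parameters so that the predicted exponent is exactly $r$, and the conjecture would then hand back a single graph $F$ with $\ex(n,F) = \Theta(n^r)$. The point is that subdivision lengths provide a flexible, almost continuous knob on the density while keeping $F$ inside a class for which both the random algebraic lower bound and the combinatorial upper bound are, in principle, within reach.

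The main obstacle, both for the unconditional result and for the subdivision conjecture, is the upper bound: lower bounds via the random algebraic method are comparatively systematic once the right non-degeneracy condition is verified, whereas $\ex(n,F) = O(n^r)$ must be established by hand for each construction, and it is precisely there that current techniques give out — which is why the unconditional theorem stops at $b \equiv \pm 1 \pmod{a}$. A secondary difficulty is the passage from finite families (for which Theorem~\ref{thm:BC} already delivers $\Theta(n^r)$) to a single graph: amalgamating the members of a family into one graph without spoiling either bound forces extra constraints on the gadget $\Gamma$, and these constraints are part of what restricts the admissible exponents.
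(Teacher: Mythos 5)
The statement you are asked to prove is the paper's motivating \emph{open} conjecture, and the paper does not prove it; it only (a) enlarges the set of known realisable exponents (Theorem~\ref{thm:main}) and (b) reduces the full conjecture to a cleaner statement about subdivisions (Theorem~\ref{thm:subd-rational}, conditional on Conjecture~\ref{conj:subdiv}). You correctly recognise this, and your two-pronged plan matches the paper's structure at a high level: blow-ups of balanced rooted bipartite graphs, random-algebraic lower bounds via Lemma~\ref{thm:lowerbound}, dependent-random-choice upper bounds, and a subdivision-based conditional reduction.

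The details diverge in ways worth flagging. For prong (i), the gadget realising $b \equiv -1 \pmod a$ is not a subdivided complete bipartite graph but the tree $D_{t,s}$, obtained from a $t$-star by attaching $s$ pendant root-leaves to each of its $t+1$ vertices; Theorem~\ref{thm:mult_main} gives $\ex(n,D_{t-1,s-1}^{\ell}) = \Theta(n^{2-\frac{t}{st-1}})$, and $st-1 \equiv -1 \pmod t$. The case $b \equiv +1 \pmod a$ is \emph{not} obtained from the same gadget by adding or deleting a pendant path; it comes from an entirely separate mechanism, the Erd\H{o}s--Simonovits densification $F \mapsto F(1)$ of Theorem~\ref{thm:es_reduction} applied iteratively to theta graphs via Lemma~\ref{lem: densify}. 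For prong (ii), the paper's Subdivision conjecture concerns only the $1$-subdivision (if $\ex(n,F)=O(n^{1+\alpha})$ then $\ex(n,{\rm sub}(F))=O(n^{1+\alpha/2})$) rather than ``thickened'' longer subdivisions as a continuous knob, and the deduction of Conjecture~\ref{conj:es} from it (Theorem~\ref{thm:e-s}) is a finite induction on $a+b$ over the class $\mathcal{F}_0$, alternating densification $F \mapsto F_*(1)$ when $b>2a$ with sparsification $F \mapsto {\rm sub}(F)$ when $a<b<2a$, starting from the base case $a=1$ (stars, i.e.~unbalanced complete bipartite graphs). Your sketch of ``choose $H$ and the subdivision lengths so the predicted exponent is exactly $r$'' misses this recursion, which is precisely what makes a single local operation (the $1$-subdivision) sufficient to hit every rational exponent.
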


Let $r \in \mathbb{R}$ be \emph{realisable} (by $F$) if there exists a graph $F$ with $\ex(n,F) =\Theta(n^{r})$.
In contrast to the satisfying answer provided by Theorem~\ref{thm:BC}, Conjecture~\ref{conj:es} remains elusive.  Until now, the only known realisable numbers are $0, 1, \frac{7}{5}, 2$, the numbers of the form $1 + \frac{1}{m}$, $2- \frac{1}{m}$, $2-\frac{2}{m}$ with $m\in\mathbb{N}$. Faudree and Simonovits~\cite{faudree1983}, and Conlon~\cite{conlon2014} proved that the numbers $1+ \frac{1}{m}$ are realisable by Theta graphs $\theta_{m,\ell}$ for large $\ell$, which are graphs consisting of $\ell$ internally vertex-disjoint paths of length $m$ between two vertices (see \cite{verstraete2018, 2018Bukhtait} for recent progress on the extremal number of Theta graphs). 
K\"ovari, S\'os and Tur\'an~\cite{kovari1954}, and Alon, R\'onyai and Szab\'o~\cite{alon1999} proved that the numbers $2-\frac{1}{m}$ are realisable by unbalanced complete bipartite graphs (see also \cite{kollar1996, bukh2015}). 
Recently, Jiang, Ma and Yepremyan~\cite{2018jiang} proved that $2-\frac{2}{2m+1}$ is realisable by generalised cubes and $\frac{7}{5}$ is realisable by a so-called $3$-comb-pasting graph. Note that it is not even known whether there is a single limit point on the set of realisable numbers in the interval $(1,2)$.

\subsection{Our results}
One main contribution of this paper is the following theorem that provides infinitely many more realisable numbers, including all previously known realisable numbers.
\begin{theorem}\label{thm:main}
For each $a,b\in \mathbb{N}$ with $a< b$ and $b\equiv \pm 1 ~({\rm mod}\:a)$, the number $2- \frac{a}{b}$ is realisable.
\end{theorem}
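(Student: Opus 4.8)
The plan is to realise $2-\tfrac{a}{b}$ as the extremal number of a \emph{single} bipartite graph. Write $b = ka+\eps$ with $k\ge 1$ an integer and $\eps\in\{+1,-1\}$; this is possible precisely because $a<b$ and $b\equiv\pm 1\pmod a$. I would build a bipartite graph $F=F_{a,b}$ out of (unbalanced) complete bipartite graphs $K_{s,\ell}$ — with $s$ a small integer (roughly $b/a$) and $\ell=\ell(a,b)$ a large constant — by subdividing edges and pasting several copies together, in the spirit of the theta graphs $\theta_{m,\ell}$ and the comb‑pasting graphs of Jiang, Ma and Yepremyan. The integers $k$ and $\eps$ should govern the number of pasted copies and the lengths of the subdivided segments, and the congruence hypothesis is exactly what lets these parameters be positive integers while the resulting graph has extremal number controlled by the single quantity $\tfrac ba$. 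One then aims to prove
\[
\ex(n,F_{a,b}) = \Theta\!\left(n^{2-a/b}\right),
\]
with the upper and lower bounds handled separately. As sanity checks, $F_{a,b}$ should degenerate to $\theta_{a+1,\ell}$ when $b=a+1$ (the case $k=1$, $\eps=+1$, giving exponent $1+\tfrac1{a+1}=2-\tfrac{a}{a+1}$) and to $K_{b,\ell}$ when $a=1$ (exponent $2-\tfrac1b$), which is why the theorem recovers all previously known realisable numbers.

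For the upper bound $\ex(n,F_{a,b}) = O(n^{2-a/b})$ I would argue by counting in the tradition of K\H{o}v\'ari--S\'os--Tur\'an and Faudree--Simonovits, ideally isolating a clean ``subdivision lemma'': in a host graph $G$ with $\gg n^{2-a/b}$ edges, first locate many copies of the dense core of $F_{a,b}$ (a complete bipartite piece $K_{s,\ell}$, which by itself forces density about $n^{2-1/s}$), and then, via a breadth‑first‑search argument and convexity, bound the number of ways the subdivided and pasted parts of $F_{a,b}$ can be threaded in. Subdivisions only help in this direction, since they make $F_{a,b}$ sparser and hence harder to avoid; the real content is the bookkeeping showing that the (partial) subdivisions trim the exponent down from $2-\tfrac1s$ to exactly $2-\tfrac ab$, and this is where the arithmetic $b=ka\pm1$ first gets used.

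The lower bound $\ex(n,F_{a,b}) = \Omega(n^{2-a/b})$ is where I expect the principal obstacle. It should come from Bukh's random algebraic method — the engine behind Theorem~\ref{thm:BC} — applied now to a single forbidden graph: one takes a random low‑degree algebraic bipartite graph over a finite field $\mathbb{F}_q$, with the dimension of the ambient space tuned to $a$, having $\Theta(n)$ vertices and edge‑density $\Theta(n^{1-a/b})$, and must show that with positive probability it contains no copy of $F_{a,b}$. As usual this reduces to a purely combinatorial requirement: for every subgraph $F'\subseteq F_{a,b}$ the edge count $e(F')$ must be sufficiently small relative to $v(F')$, weighted according to the target exponent $2-\tfrac ab$. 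Verifying this for all subgraphs simultaneously is delicate, because $F_{a,b}$ must at the same time be dense enough for the upper bound to hold — the two demands (globally dense, but sparse on every subgraph in the weighted sense) pull against each other, and the assertion of the theorem is that they can be reconciled precisely when $b\equiv\pm1\pmod a$; for other pairs $(a,b)$ the natural construction either contains $F_{a,b}$ or fails to match the upper bound. So the crux, and the step I would budget most effort for, is this joint design‑and‑verification: choosing the number of pasted copies and the segment lengths so that every weighted subgraph inequality is simultaneously valid yet tight enough, which is exactly the Diophantine content of $b=ka\pm1$. This is presumably also the point at which the general rational case resists the same attack and remains only conjectural, via the subdivision conjecture announced in the abstract.
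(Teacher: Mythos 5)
Your plan has the right overall shape (find a single forbidden graph, prove upper and lower bounds separately, use Bukh's random algebraic engine for the lower bound), but it is too vague to assess, and more importantly it diverges from what actually works in a way that would likely lead you into a dead end. Here are the concrete issues.

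First, the paper does not handle the two residue classes $b\equiv+1$ and $b\equiv-1\pmod a$ with a single construction, and it uses no subdivisions at all in the proof of this theorem; subdivisions only enter the (conditional) Theorem~\ref{thm:subd-rational}. For $b\equiv-1\pmod a$, the forbidden graph is a rooted blow-up $D_{t-1,s-1}^{\ell}$ of a small \emph{tree} $D_{t-1,s-1}$ (a star with $s-1$ pendant leaves at each vertex, rooted at the leaves), with $t=a$ and $s=(b+1)/a$; this is not a subdivided complete bipartite graph, and the upper bound $\ex(n,D_{t-1,s-1}^{\ell})=O(n^{2-t/(st-1)})$ is proved by a carefully iterated dependent-random-choice argument plus a bespoke embedding lemma (Lemmas~\ref{lem:drc} and~\ref{lem:embed}), not the BFS/convexity bookkeeping you sketch. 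For $b\equiv+1\pmod a$, the paper does not construct a new graph from scratch at all: it starts from the known realisability of $2-\tfrac{a}{a+1}$ by theta graphs and then repeatedly applies a \emph{densification} step $F\mapsto F_*(1)$ (Lemma~\ref{lem: densify}), combining the Erd\H{o}s--Simonovits reduction theorem (Theorem~\ref{thm:es_reduction}) for the upper bound with the Bukh--Conlon lower bound (Lemma~\ref{thm:lowerbound}) for the lower bound. Note that densification moves the exponent \emph{up}, while subdividing moves it \emph{down}; to climb from $2-\tfrac{a}{a+1}$ to $2-\tfrac{a}{a+1+ka}$ you must densify, so your instinct to subdivide points in the wrong direction for this half of the theorem.

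Second, your plan for the lower bound proposes to rerun the random algebraic verification by hand for your (unspecified) $F_{a,b}$. The paper instead works entirely inside the framework of \emph{balanced rooted bipartite graphs}: the Bukh--Conlon lemma already gives $\ex(n,F^{\ell})=\Omega(n^{2-1/\rho(F)})$ for any balanced rooted bipartite $F$ and large $\ell$, so the lower bound reduces to the purely combinatorial verification that the relevant graphs (the trees $D_{t,s}$, and the densified graphs $F_*(1)$) are balanced, which is done in Proposition~\ref{prop:balanced} and Proposition~\ref{prop:dense_balanced}. The subgraph-density inequalities you gesture at are essentially the balance condition, so you have the right intuition, but the proposal never names the notion of balance, never verifies it for a concrete graph, and never explains how the congruence hypothesis enters; in the paper it enters simply because the parameters $(t,s)$ and the number of densification steps must be integers. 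Without a specific $F_{a,b}$ to write down and without the balance machinery, the ``joint design-and-verification'' step you flag as the crux remains entirely open in your proposal.
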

As a consequence, this theorem provides infinitely many limit points on the set of realisable numbers.

\begin{corollary}\label{cor:limit}
For each $m\in \mathbb{N}$, the number $2 - \frac{1}{m}$ is a limit point in the set of realisable real numbers.
\end{corollary}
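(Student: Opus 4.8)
The plan is to deduce the corollary from Theorem 1.5 by producing, for each fixed $m \in \mathbb{N}$, a sequence of realisable numbers converging to $2 - \frac{1}{m}$. First I would fix $m$ and look for a family of pairs $(a,b)$, with $a < b$ and $b \equiv \pm 1 \pmod a$, for which $\frac{a}{b} \to \frac{1}{m}$. The natural choice is to take $a = k$ and $b = mk + 1$ for $k \in \mathbb{N}$: then $b \equiv 1 \pmod a$, and $a < b$ holds trivially, so Theorem 1.5 applies and $2 - \frac{k}{mk+1}$ is realisable for every $k$. Since $\frac{k}{mk+1} \to \frac{1}{m}$ as $k \to \infty$, the numbers $2 - \frac{k}{mk+1}$ form a sequence of realisable numbers tending to $2 - \frac{1}{m}$.

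Next I would rule out the degenerate possibility that this sequence is eventually constant, so that $2 - \frac1m$ is genuinely a limit point and not merely a member of the set. This is immediate: $\frac{k}{mk+1}$ is strictly increasing in $k$ (its reciprocal $m + \frac1k$ is strictly decreasing), hence the realisable numbers $2 - \frac{k}{mk+1}$ are pairwise distinct and none of them equals $2 - \frac1m$ (as $\frac{k}{mk+1} < \frac1m$ for all $k$). Therefore every neighbourhood of $2 - \frac1m$ contains infinitely many distinct realisable numbers, which is exactly the assertion that $2 - \frac1m$ is a limit point of the set of realisable reals.

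There is essentially no obstacle here: the only point requiring the tiniest care is checking that the congruence condition $b \equiv \pm 1 \pmod a$ in Theorem 1.5 is satisfied by the chosen family, and the choice $b = mk+1$ makes this transparent. (One could equally well use $a = k$, $b = mk - 1$ to get a sequence approaching from the other side, giving a two-sided accumulation, but a one-sided sequence already suffices for the statement.) Thus the corollary follows in a couple of lines once Theorem 1.5 is in hand.
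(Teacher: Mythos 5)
Your proposal is correct and is essentially the argument the paper has in mind: the paper states the corollary as an immediate consequence of Theorem~\ref{thm:main} without spelling it out, and the family $(a,b) = (k, mk+1)$ with $b \equiv 1 \pmod a$, giving realisable numbers $2 - \frac{k}{mk+1} \nearrow 2 - \frac{1}{m}$, is the natural instantiation. Your extra care in noting that these values are pairwise distinct and never equal $2 - \frac1m$ is exactly what is needed to pass from "consequence" to "limit point."
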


Secondly, we propose an approach to tackle Conjecture~\ref{conj:es} via the following conjecture on subdivision of graphs. For a graph $F$, let ${\rm sub}(F)$ be the \emph{$1$-subdivision} of $F$, obtained from $F$ by replacing all edges of $F$ with pairwise internally disjoint paths of length two.

\begin{conjecture}[Subdivision conjecture]\label{conj:subdiv}
Let $F$ be a bipartite graph. If $\ex(n , F) = O(n^{1+\alpha})$ for some $\alpha > 0$, then 
$$\ex(n , {\rm sub}(F)) = O(n^{1 + \frac{\alpha}{2}}).$$
\end{conjecture}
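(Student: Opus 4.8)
The plan is to reduce the problem, via standard host regularisation, to finding a copy of $F$ in an auxiliary ``codegree graph'' and then lifting it. Suppose $G$ is an $N$-vertex graph with $e(G)\ge CN^{1+\alpha/2}$ for a large constant $C=C(F)$; we want to locate $\mathrm{sub}(F)$ in $G$. First I would apply the Erd\H{o}s--Simonovits regularisation and pass to an almost $D$-regular \emph{bipartite} subgraph $G'$, with parts $X$ and $Y$, on $m$ vertices with $m$ in a convenient range and $D=\Omega(m^{\alpha/2})$ (the max-cut bipartition halves the minimum degree and does not increase the maximum degree, so near-regularity survives). The idea is to put the branch vertices of $\mathrm{sub}(F)$ inside $X$ and the subdivision vertices inside $Y$. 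Let $\Gamma$ be the graph on $X$ in which $xx'$ is an edge precisely when $x$ and $x'$ have at least $e(F)$ common neighbours in $Y$. If $\Gamma$ contains a copy of $F$, then processing the edges of this copy one at a time and assigning to each a common neighbour in $Y$ not yet used produces a copy of $\mathrm{sub}(F)$ inside $G'$: the $e(F)$ subdivision vertices can be chosen distinct because each edge of $\Gamma$ offers at least $e(F)$ choices, and they automatically avoid the branch vertices, which lie in $X$. So it suffices to show $e(\Gamma)\ge C'm^{1+\alpha}$, where $C'$ is the constant implicit in $\ex(N,F)=O(N^{1+\alpha})$; then $F\subseteq\Gamma$ and we are done.

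A reassuring sanity check is that $\alpha/2$ is the right exponent. Modelling $G'$ (which has $\Omega(m^{1+\alpha/2})$ edges) as a random graph of edge-density $m^{\alpha/2-1}$, the expected number of labelled copies of $\mathrm{sub}(F)$ is of order $m^{v(F)+e(F)}(m^{\alpha/2-1})^{2e(F)}=m^{v(F)+(\alpha-1)e(F)}$, which is exactly the expected number of labelled copies of $F$ at edge-density $m^{\alpha-1}$ --- the density threshold encoded in $\ex(N,F)=O(N^{1+\alpha})$. In the ``locally dense'' regime this is provable by convexity: if $\ge C'm^{1+\alpha}$ pairs in $X$ have codegree $\ge e(F)$, then $e(\Gamma)\ge C'm^{1+\alpha}$ and the plan above goes through verbatim.

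The main obstacle is the complementary ``locally sparse'' regime. The extreme case is when $G'$ is $C_4$-free --- and since every bipartite $F$ has $\ex(N,F)=O(N^{2-1/s})$ for some $s$ by K\H{o}v\'ari--S\'os--Tur\'an, we may take $\alpha<1$, so there are $C_4$-free graphs with $\Theta(m^{3/2})$ edges, far more than the $Cm^{1+\alpha/2}$ we start with. In such a host every pair has at most one common neighbour, so $\Gamma$ is empty; and dropping the threshold to $1$ does not help, because a copy of $F$ in the resulting ``has-a-common-neighbour'' graph could use a single common vertex for all its edges and hence fail to lift. Since $\mathrm{sub}(F)$ is itself bipartite and $C_4$-free, local sparsity cannot exclude it, so one is forced to embed $\mathrm{sub}(F)$ directly in $G'$ while carefully tracking how the local density of $G'$ is distributed over its vertices and pairs. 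I would attempt this through a weighted refinement of the above plan: weight each $y\in Y$ by $\binom{d_{G'}(y)}{2}$, embed the branch vertices of $F$ into $X$ greedily using dependent random choice so as to keep the total weight of surviving common-neighbour pairs large, and reveal the subdivision vertices along the way. The delicate point --- and where I expect a genuinely new ingredient to be needed, perhaps the random algebraic constructions of Bukh and Conlon or a Bondy--Simonovits-style walk-counting identity adapted to subdivisions --- is to make this survive the locally sparse range, where the usual bound on the number of small-codegree pairs inside a random neighbourhood is weaker than the number of pairs one must spend, so no constant-factor loss can be absorbed. Correctly interpolating between the $C_4$-free extreme and the quasi-clique extreme is, I believe, the crux of the conjecture.
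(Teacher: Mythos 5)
The statement you were given is labelled a \emph{conjecture} in the paper, and it is open: the paper does not prove Conjecture~\ref{conj:subdiv}. What the paper does is show that this conjecture, if true, implies the rational exponents conjecture (Theorem~\ref{thm:subd-rational}), record that the only known case is when $F$ is a Theta graph, and prove the weaker bound $\ex(n,{\rm sub}(K_{s,t}))=O(n^{3/2-1/(4s-2)})$ for complete bipartite graphs (Proposition~\ref{prop: subd complete bipartite}). There is therefore no proof in the paper against which to check your argument, and you should not expect your attempt to close.

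That said, your write-up is an accurate and slightly more detailed rendering of the heuristic the paper itself offers in Section~4.2: pass to a near-regular bipartite host, form the auxiliary codegree graph (the paper's $G^*$, where $uv$ is an edge if $u$ and $v$ share a common neighbour), argue via dependent random choice that this graph has $\Omega(n^{1+\alpha})$ edges and hence contains $F$, and try to lift the copy of $F$ to a copy of ${\rm sub}(F)$. You have also correctly isolated the same obstruction the paper flags: the copy of $F$ found in the codegree graph may be degenerate, because that graph contains the cliques $N_G(w)$ of size $\Omega(n^{\alpha/2})$, so the lifted subdivision vertices may coincide. Your analysis of the locally sparse regime --- where the host is nearly $C_4$-free, the codegree graph is essentially empty, and lowering the codegree threshold to $1$ does not guarantee a liftable copy --- is a sharp way of explaining why no constant-factor loss can be absorbed and why a genuinely new idea is needed. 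To be explicit: what you have written is not a proof, does not claim to be one, and in fact gives a correct diagnosis of the gap, in agreement with the paper's own view that the conjecture is open.
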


Apart from being interesting on its own, somewhat surprisingly, we show that this seemingly unrelated conjecture implies Conjecture~\ref{conj:es}.

\begin{theorem}\label{thm:subd-rational}
	If Conjecture~\ref{conj:subdiv} holds, then for every rational number $r \in [1,2]$, there exists a graph $F$ with $\ex(n , F) = \Theta(n^r)$.
\end{theorem}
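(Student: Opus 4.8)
The plan is to realise every rational of the form $2-\tfrac ab$ with $\gcd(a,b)=1$ and $1\le a<b$; since these exhaust $(1,2)\cap\mathbb Q$ and $r=1,2$ are classical, this proves the theorem. The key feature of Conjecture~\ref{conj:subdiv} for this purpose is that it is designed to be iterated: for \emph{any} graph $G$ the $1$-subdivision $\mathrm{sub}(G)$ is bipartite (its subdivision vertices form one part), so if a bipartite $G$ satisfies $\ex(n,G)=O(n^{1+\alpha})$ the conjecture may be applied to $G$, then to $\mathrm{sub}(G)$, then to $\mathrm{sub}(\mathrm{sub}(G))$, and so on. Writing $\ex(n,G)=\Theta(n^{2-x})$ and calling $x\in(0,1)$ the \emph{defect}, a $1$-subdivision has the effect $x\mapsto\tfrac{1+x}{2}$ on the defect, while a $t$-fold blow-up has the unconditional effect $x\mapsto x/t$ (exactly so for complete bipartite graphs, as $K_{s,m}[t]=K_{st,mt}$). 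Iterating these two moves from the base defects $1/s$ furnished by the complete bipartite graphs $K_{s,m}$ — together, if convenient, with the defects realised in Theorem~\ref{thm:main} — one reaches, as we will argue, the defect $\tfrac ab$ for every coprime $1\le a<b$.

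Concretely, the approach is to revisit the construction behind Theorem~\ref{thm:main}. For each coprime $a<b$ it produces (or, suitably extended, should produce) a bipartite graph $F_{a,b}$ assembled from a complete bipartite graph by a bounded sequence of operations including $1$-subdivisions and blow-ups, with the property that the lower bound $\ex(n,F_{a,b})=\Omega(n^{2-a/b})$ follows from the construction used there and does not invoke the congruence condition $b\equiv\pm1\pmod a$. For the matching upper bound one peels off the last operation and induces. A blow-up $F_{a,b}=F'[t]$ is handled by the standard estimate that $\ex(n,F')=O(n^{2-x})$ implies $\ex(n,F'[t])=O(n^{2-x/t})$ (obtained by passing to the graph on $t$-subsets of the host and using the Sidorenko property of $K_{t,t}$); a $1$-subdivision $F_{a,b}=\mathrm{sub}(F')$ is handled by Conjecture~\ref{conj:subdiv} applied to the bipartite graph $F'$, giving $\ex(n,F_{a,b})=O(n^{1+\alpha/2})$ from $\ex(n,F')=O(n^{1+\alpha})$. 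In the unconditional proof of Theorem~\ref{thm:main} this induction runs only when $b\equiv\pm1\pmod a$, precisely because only then do all the $1$-subdivisions met along the way land on graphs — essentially complete bipartite graphs — whose subdivision Tur\'an numbers are already bounded unconditionally; Conjecture~\ref{conj:subdiv} supplies this bound for an arbitrary bipartite graph at every step and thus lifts the restriction to all coprime $a<b$.

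The main obstacle lies in the book-keeping, not in a new idea. One must (i) re-examine the construction of Theorem~\ref{thm:main} and confirm that for every coprime $a<b$ it yields a bipartite $F_{a,b}$ whose lower-bound analysis is insensitive to the congruence condition; (ii) check that, as $F_{a,b}$ is peeled apart, every $1$-subdivision encountered is genuinely the $1$-subdivision of a single bipartite graph carrying the claimed polynomial Tur\'an bound, so that Conjecture~\ref{conj:subdiv} is legitimately applicable there with the correct intermediate exponent; and (iii) verify that the defects telescope — each blow-up dividing the defect by its factor, each $1$-subdivision sending $x$ to $\tfrac{1+x}{2}$ — so that, starting from the defects $1/s$ (and those of Theorem~\ref{thm:main}), one lands exactly on $\tfrac ab$ for every target. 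This last step is a finite Euclidean-algorithm-type computation, complicated only by the way the two moves interact with reduction of fractions to lowest terms, and it is here that the congruence condition in Theorem~\ref{thm:main} is revealed to be an artefact of what was provable unconditionally rather than of the construction itself. Once these verifications are carried out, every rational in $[1,2]$ is realised.
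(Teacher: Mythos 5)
Your high-level idea — iterate the subdivision move $x\mapsto\frac{1+x}{2}$ together with an unconditional second move and run a Euclidean-type bookkeeping to hit every rational defect — is indeed the shape of the paper's argument. But the second move you propose is the wrong one, and with it the reachability argument provably fails, so the proposal has a genuine gap rather than being a variant route.

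The paper's second move is \emph{densification} via the Erd\H{o}s--Simonovits reduction (Theorem~\ref{thm:es_reduction}): adding a universal vertex to each side sends $\alpha=\frac ab$ to $\frac{\alpha}{1+\alpha}=\frac{a}{a+b}$, i.e.\ it adds $a$ to the denominator. You instead use the $t$-fold blow-up $x\mapsto x/t$, i.e.\ multiply the denominator by $t$. Two problems. First, the claimed unconditional bound $\ex(n,F[t])=O(n^{2-x/t})$ is not a theorem you can point to; the Sidorenko-for-$K_{t,t}$ counting you sketch matches in order of magnitude but you would still have to handle degenerate homomorphisms and produce pairwise disjoint $t$-tuples to get a genuine copy of $F[t]$, which is a real (and in general open) difficulty — contrast with the Erd\H{o}s--Simonovits result, which is cited and used verbatim in the paper. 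Second, and more decisively, even if the blow-up bound were granted, the pair of moves $\{a/b\mapsto a/(tb),\ a/b\mapsto (a+b)/(2b)\}$ starting from the defects $1/s$ (and even throwing in the defects of Theorem~\ref{thm:main}) does \emph{not} generate all rationals in $(0,1)$. Take $5/13$: since $13\equiv 3\pmod 5$ it is not supplied by Theorem~\ref{thm:main}; subdivision images are always $\ge 1/2$, so $5/13<1/2$ cannot be a subdivision image; and the only divisor of $13$ exceeding $5$ is $13$ itself, so the blow-up move gives no nontrivial predecessor either. By contrast the paper's densify move has reverse $a/b\leftarrow a/(b-a)$, which together with the subdivision reverse $a/b\leftarrow (2a-b)/b$ gives precisely the $a+b$-induction of Theorem~\ref{thm:e-s}: from $5/13$ go to $5/8$ (densify reverse), then to $1/4$ (subdivide reverse), done.

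Two further issues. You assert the lower bound ``follows from the construction used there,'' but you never engage with what actually supplies it: Lemma~\ref{thm:lowerbound} (Bukh--Conlon) applies only to blow-ups $F^\ell$ of \emph{balanced rooted} bipartite graphs, so the paper must carefully define the class $\mathcal F_0$, verify balancedness is preserved under subdivision and densification (Propositions~\ref{prop:dense_balanced} and~\ref{prop:sub balanced}), and check that ${\rm sub}(F^\ell)={\rm sub}(F)^\ell$ when the root set is independent; none of this appears in your sketch. Finally, your plan to ``re-examine the construction behind Theorem~\ref{thm:main}'' and peel off $1$-subdivisions is not consistent with that theorem: the graphs there are $D_{t,s}^\ell$ and densified blow-ups, and contain no subdivision step to peel. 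The right fix is to replace your blow-up move with the Erd\H{o}s--Simonovits densification move and add the balancedness/rooted-blow-up bookkeeping, which is exactly Theorem~\ref{thm:e-s} in the paper.
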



It is worth noticing that if one considers instead 1-subdivision of non-bipartite $F$ in the Subdivision conjecture, then a stronger conclusion holds, as shown very recently by Conlon and Lee~\cite{2018conlonlee}. They proved that $\ex (n,{\rm sub}(K_t))=O(n^{3/2-\delta})$ for some $\delta=\delta(t)>0$. Nonetheless, the only known case for Conjecture~\ref{conj:subdiv} is when $F$ is a Theta graph. Indeed, for any $m\ge 2$,
$$\ex (n,\theta_{m,\ell})=O(n^{1+\frac{1}{m}})\quad \mbox{ and } \quad \ex (n,{\rm sub}(\theta_{m,\ell}))=\ex (n,\theta_{2m,\ell})=O(n^{1+\frac{1}{2m}}).$$

We do not know whether Conjecture~\ref{conj:subdiv} is true for complete bipartite graphs. Conlon and Lee~\cite{2018conlonlee} proved that the extremal number of the $1$-subdivision of $K_{s,t}$ is $O(n^{\frac{3}{2} - \frac{1}{12t}})$ when $t\geq s$. If Conjecture~\ref{conj:subdiv} is true, then this ought to be $O(n^{\frac{3}{2}-\frac{1}{2s}})$ for large $t$, where the exponent $\frac{3}{2} - \frac{1}{2s}$ only depends on the smaller number $s$ rather than $t$. To suggest the conjecture is plausible, we provide a proof that $\ex (n,{\rm sub}(K_{s,t}))=O(n^{\frac{3}{2} - \frac{1}{4s-2}})$, see Theorem~\ref{prop: subd complete bipartite} in the concluding remark section. Independent of our work, Janzer~\cite{Janzer2018} also proved the same bound for the 1-subdivision of complete bipartite graphs, and improved the upper bound of Conlon and Lee~\cite{2018conlonlee} for 1-subdivision of complete graphs.

\subsection{Organisation of the paper}
The paper is organised as follows. In Section~\ref{sec:prelim}, we will define several graphs, discuss the concept of balanced rooted graphs and collect several lemmas. In Section~\ref{sec:Dts}, we will prove part of Theorem~\ref{thm:main} that $2-\frac{a}{b}$ is realisable by a certain graph 
when $b \equiv -1~ ({\rm mod}\:a)$. In Section~\ref{sec:reduc}, we will prove Theorem~\ref{thm:subd-rational}, and finish the proof of Theorem~\ref{thm:main}, i.e.~$2-\frac{a}{b}$ is realisable when $b \equiv 1~ ({\rm mod}\:a)$ by using a combination of the reduction theorem of Erd\H{o}s and Simonovits \cite{erdos1970} and the theorem of Bukh and Conlon \cite{bukh2018}. Some concluding remarks are given in Section~\ref{sec-remark}.


\section{Preliminaries}
\label{sec:prelim}
\subsection{Basic terminology and lemmas}

Let $\mathbb{N}$ be the set of natural numbers. For any $n \in \mathbb{N}$, denote $[n]:=\{1,\dots, n\}$. We only consider finite simple graphs in this paper.
 For a graph $G$ and vertices $u,v \in V(G)$, we write ${\rm dist}(u,v)$ for the \emph{distance} between $u$ and $v$ in $G$, i.e.~the minimum number of edges in a path between $u$ and $v$. For a set $A\subseteq V(G)$ and $i\in \mathbb{N}\cup\{0\}$, let
$${\rm dist}(u,A) := \min_{v \in A} \{ {\rm dist}(u,v)\}\quad  \enspace \text{and} \enspace  \quad \Gamma_G^i (A) := \left \{u \in V(G) \: : \: \text{${\rm dist}(u,A) = i$} \right \}.$$
We denote \emph{the external neighbourhood} of $A$ to be $\Gamma_G(A) := \Gamma_G^1 (A)$, \emph{the common neighbourhood} of $A$ to be $N_G(A) := \bigcap_{a \in A}\Gamma_G(a)$, and \emph{the common degree} of $A$ to be $d_G (A) := |N_G(A)|$. For vertex sets $A,B \subseteq V(G)$, we let $\Gamma_G(A,B) := \Gamma_G(A) \cap B$, $N_G(A,B) := N_G(A) \cap B$, $d_G(A,B) := |N_G(A,B)|$, $E(A,B) := \left \{ \left \{a,b \right \} \: : \: a \in A, b \in B, ab \in E(G) \right \}$ and $e(A,B) := |E(A,B)|$. We also denote $E(A) := E(A,A)$ and $e(A) := e(A,A)$. For a set $A\subseteq V(G)$ and $s\in\mathbb{N}$, denote by ${A\choose s}$ all $s$-sets in $A$. We will omit the subscript $G$ if it is clear from the context.

We claim a result holds for $x\gg y$ if there exists an increasing function $f : [1,\infty) \to [1,\infty)$ such that the claimed result holds for all $x,y \geq 1$ with $x \geq f(y)$. We will not explicitly compute this function.
For convenience, we often omit the ceilings and floors and treats large number as integers if this does not affect the argument. We denote a star with $k$ edges a \emph{$k$-star} and the vertex of degree $k$ its \emph{centre}. If $k=1$, then we choose any of two vertices to be the centre. The following lemma is an easy consequence of Hall's theorem.
\begin{lemma}\label{lem:star}
Let $k \geq 1$ be an integer and $G$ be a bipartite graph with a bipartition $(A,B)$. If $k|S| \leq |\Gamma(S)|$ for any $S \subseteq A$, then $G$ contains vertex-disjoint $k$-stars whose centres cover $A$.
\end{lemma}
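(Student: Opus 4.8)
The plan is to reduce the statement to an ordinary bipartite matching problem and then apply Hall's theorem. First I would build an auxiliary bipartite graph $G'$ with parts $A'$ and $B$, where $A'$ is obtained from $A$ by replacing each vertex $a \in A$ with $k$ \emph{clones} $a_1, \dots, a_k$, and where each clone $a_i$ is joined in $G'$ to every vertex of $\Gamma_G(a) \subseteq B$ (so cloning neither adds edges inside $A'$ nor alters $B$, and $G'$ stays bipartite with the stated parts). The point of this construction is that a matching in $G'$ saturating $A'$ is exactly the same data as a family of vertex-disjoint $k$-stars in $G$ whose centres are the vertices of $A$: the $k$ clones of $a$ get matched to $k$ distinct vertices of $B$, all of them neighbours of $a$, and distinct centres use disjoint leaf sets since the edges of a matching are pairwise disjoint (and centres lie in $A$, leaves in $B$).

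Next I would verify Hall's condition for $G'$. Take any $T \subseteq A'$ and let $S := \{ a \in A : a_i \in T \text{ for some } i \}$ be the set of vertices of $A$ having a clone in $T$. Since every clone of a vertex $a$ has the same neighbourhood $\Gamma_G(a)$ in $G'$, we have $\Gamma_{G'}(T) = \bigcup_{a \in S} \Gamma_G(a) = \Gamma_G(S)$, and hence, using the hypothesis $k|S| \le |\Gamma(S)|$,
\[
|\Gamma_{G'}(T)| \;=\; |\Gamma_G(S)| \;\geq\; k|S| \;\geq\; |T|,
\]
where the last inequality holds because $T$ contains at most $k$ clones for each of the $|S|$ vertices of $S$. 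Thus Hall's condition is satisfied, so $G'$ contains a matching saturating $A'$; translating this matching back as described above yields vertex-disjoint $k$-stars whose centres are precisely the vertices of $A$, as required. (When $k = 1$ a ``$1$-star'' is a single edge and the assertion is literally Hall's theorem, with either endpoint permitted as centre, which causes no difficulty.)

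I do not anticipate any real obstacle: the only points requiring a little care are that the cloning construction keeps $G'$ bipartite with parts $A'$ and $B$, and that the neighbourhood identity $\Gamma_{G'}(T) = \Gamma_G(S)$ is invoked in the correct direction so that the hypothesis can be applied to $S$. Everything else is routine bookkeeping.
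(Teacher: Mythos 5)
Your proof is correct, and since the paper simply states that the lemma is an easy consequence of Hall's theorem without giving details, your vertex-cloning reduction is exactly the standard argument the authors have in mind. All the verifications — that $G'$ is bipartite, that $\Gamma_{G'}(T)=\Gamma_G(S)$, and that a matching saturating $A'$ translates to the desired vertex-disjoint $k$-stars — are carried out correctly.
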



\subsection{Rooted blow-up of balanced bipartite graphs}\label{subsec:balanced}
Bukh and Conlon~\cite{bukh2018} introduced the following concepts of rooted blow-ups and balanced rooted trees. Here, we slightly extend their definitions. Consider a tuple $(F,R)$ of a graph $F$ and a proper subset $R\subsetneq V(F)$ of vertices. We say that the tuple $(F,R)$ is \emph{rooted} on $R$ and call $R$ the set of \emph{roots}. We simply write $F$ instead of $(F,R)$ if the roots are clear. For each non-empty set $S\subseteq V(F)$, let $\rho_F (S) := \frac{e_S}{|S|}$, where $e_S$ is the number of edges in $F$ incident with a vertex in $S$. Let $\rho(F):= \rho_F(V(F)\setminus R)$. Again, we omit the subscripts if it is clear. Note that $\rho(F)$ is well-defined as $R$ is a proper subset of $V(F)$.

We say that $(F,R)$ (or $F$ if $R$ is clear) is \emph{balanced} if $\rho_F (S) \geq \rho(F)$ holds for any non-empty subset $S\subseteq V(F)\setminus R$. For $\ell\in \mathbb{N}$ and a bipartite graph $F$ rooted on $R$, we let $F^{\ell}_R$ be the graph we obtain by taking disjoint union of copies of $F$ and identifying the vertices corresponding to a vertex $v$ into one vertex for each $v\in R$, see Figure~\ref{fig-blowup}. We omit the subscript $R$ if it is clear from the context.
If a graph $F$ is rooted on some set $R$, we will treat its $1$-subdivision ${\rm sub}(F)$ also as a rooted graph with the same set of roots $R$, see Figure~\ref{fig-blowup}.


\begin{figure}[h]
\centering
\begin{tikzpicture}[scale=0.7]

\draw
(1,2)--(-2,0)
(1,2)--(0,0)
(1,2)--(2,0)
(1,2)--(4,0)

(9+6,2)--(8+6,0)
(9+6,2)--(10+6,0)
(9+6,2)--(12+6,0)
(9+6,2)--(14+6,0)
(11+6,2)--(8+6,0)
(11+6,2)--(10+6,0)
(11+6,2)--(12+6,0)
(11+6,2)--(14+6,0)
(13+6,2)--(8+6,0)
(13+6,2)--(10+6,0)
(13+6,2)--(12+6,0)
(13+6,2)--(14+6,0)
;

\filldraw[fill=white] (1, 2) circle (2pt);
\filldraw[fill=black] (-2, 0) circle (2pt);
\filldraw[fill=black] (0, 0) circle (2pt);
\filldraw[fill=black] (2, 0) circle (2pt);
\filldraw[fill=black] (4, 0) circle (2pt);

\filldraw[fill=white] (9+6, 2) circle (2pt);
\filldraw[fill=white] (11+6, 2) circle (2pt);
\filldraw[fill=white] (13+6, 2) circle (2pt);
\filldraw[fill=black] (8+6, 0) circle (2pt);
\filldraw[fill=black] (10+6, 0) circle (2pt);
\filldraw[fill=black] (12+6, 0) circle (2pt);
\filldraw[fill=black] (14+6, 0) circle (2pt);

\node at (-2,-0.5) {$u_1$};
\node at (0,-0.5) {$u_2$};
\node at (2,-0.5) {$u_3$};
\node at (4,-0.5) {$u_4$};

\node at (8+6,-0.5) {$u_1$};
\node at (10+6,-0.5) {$u_2$};
\node at (12+6,-0.5) {$u_3$};
\node at (14+6,-0.5) {$u_4$};

\draw
(11-2,2)--(9.5-2,1)
(11-2,2)--(10.5-2,1)
(11-2,2)--(11.5-2,1)
(11-2,2)--(12.5-2,1)
(9.5-2,1)--(8-2,0)
(10.5-2,1)--(10-2,0)
(11.5-2,1)--(12-2,0)
(12.5-2,1)--(14-2,0)
;

\filldraw[fill=white] (11-2, 2) circle (2pt);
\filldraw[fill=white] (9.5-2, 1) circle (2pt);
\filldraw[fill=white] (10.5-2, 1) circle (2pt);
\filldraw[fill=white] (11.5-2, 1) circle (2pt);
\filldraw[fill=white] (12.5-2, 1) circle (2pt);
\filldraw[fill=black] (8-2, 0) circle (2pt);
\filldraw[fill=black] (10-2, 0) circle (2pt);
\filldraw[fill=black] (12-2, 0) circle (2pt);
\filldraw[fill=black] (14-2, 0) circle (2pt);

\node at (8-2,-0.5) {$u_1$};
\node at (10-2,-0.5) {$u_2$};
\node at (12-2,-0.5) {$u_3$};
\node at (14-2,-0.5) {$u_4$};

\node [rectangle, fill=none, draw=none, text width = 1cm] at (1.5,-1.5) {$S_4$};

\node [rectangle, fill=none, draw=none, text width = 1cm] at (9,-1.5) {${\rm sub}(S_4)$};

\node [rectangle, fill=none, draw=none, text width = 1cm] at (17.5,-1.5) {$S_4^3$};

\end{tikzpicture}
\caption{A 4-star $S_4$ rooted on its leaves, its $1$-subdivision and its blow-up.} \label{fig-blowup}
\end{figure}
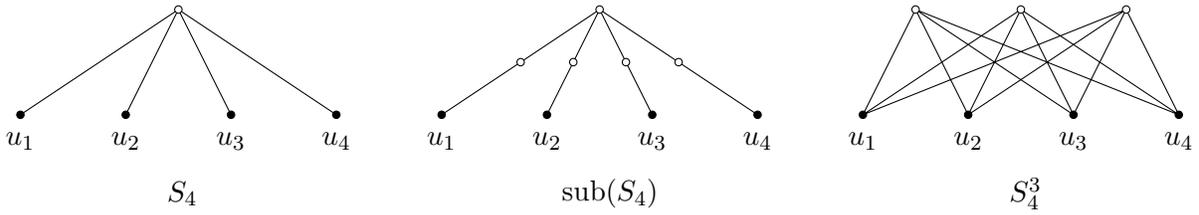

The following is a simple observation regarding balanced graphs. We omit its proof.
\begin{observation}\label{obs:easy}
Let $(F,R)$ be a balanced graph $F$ rooted on a non-empty 
set $R$. Then $(F^{\ell}_R,R)$ is balanced for all $\ell\in \mathbb{N}$. Moreover, if $F - E(R)$ is connected, then for any non-empty set $S\subseteq V(F)\setminus R$, we have $\rho_F(S)\geq 1$.
\end{observation}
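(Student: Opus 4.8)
The plan is to prove the two assertions separately; both reduce to elementary edge counting, the only point requiring care being that no edge is lost or double‑counted when passing between $F^{\ell}_R$ and its constituent copies of $F$.

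For the first assertion I would begin by describing the structure of $F^{\ell}_R$. Its non‑root vertex set $V(F^{\ell}_R)\setminus R$ is the disjoint union of $\ell$ ``blocks'' $B_1,\dots,B_\ell$, where $B_i$ is the copy of $V(F)\setminus R$ contributed by the $i$‑th copy of $F$; moreover, since distinct copies share only root vertices and there is no edge between $B_i$ and $B_j$ for $i\neq j$, every edge of $F^{\ell}_R$ incident with a non‑root vertex lies in a unique copy. Consequently $e_{V(F^{\ell}_R)\setminus R}=\ell\cdot e_{V(F)\setminus R}$ and $|V(F^{\ell}_R)\setminus R|=\ell\cdot|V(F)\setminus R|$ (this is non‑empty as $R$ is a proper subset), so $\rho(F^{\ell}_R)=\rho(F)$. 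Now, given a non‑empty $S\subseteq V(F^{\ell}_R)\setminus R$, set $S_i:=S\cap B_i$; the same disjointness yields $e_S=\sum_{i}e_{S_i}$ with no edge counted twice (each $e_{S_i}$ being computed inside the $i$‑th copy, i.e.\ in $F$ under the obvious identification) and $|S|=\sum_i|S_i|$. Applying the balancedness of $F$ to each non‑empty $S_i$ gives $e_{S_i}\ge\rho(F)|S_i|$, and summing over all $i$ (empty blocks contributing nothing) gives $e_S\ge\rho(F)|S|=\rho(F^{\ell}_R)|S|$, i.e.\ $\rho_{F^{\ell}_R}(S)\ge\rho(F^{\ell}_R)$. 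Hence $F^{\ell}_R$ is balanced.

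For the second assertion I would first note that deleting the edges inside $R$ changes neither $e_S$ nor $|S|$ when $S\subseteq V(F)\setminus R$, since no edge of $E(R)$ is incident with such an $S$; so it suffices to show that in the connected graph $H:=F-E(R)$ every non‑empty $S\subseteq V(H)\setminus R$ is incident with at least $|S|$ edges. To see this, fix a spanning tree $T$ of $H$, pick any $\rho\in R$ (possible since $R\neq\emptyset$), and root $T$ at $\rho$. Because $\rho\notin S$, every $v\in S$ has a parent in $T$, hence a well‑defined parent edge $p(v)\in E(T)\subseteq E(F)$. The map $v\mapsto p(v)$ is injective (the parent edge, read from child towards root, determines its child) and each $p(v)$ is incident with $v\in S$, so at least $|S|$ distinct edges of $F$ are incident with a vertex of $S$; therefore $\rho_F(S)=e_S/|S|\ge 1$.

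I do not expect a genuine obstacle here — this is precisely the sort of structural bookkeeping that merits being stated as an observation — but the subtlest point is the identity $e_S=\sum_i e_{S_i}$ for the blow‑up, which is why I would set up the block decomposition of $V(F^{\ell}_R)\setminus R$ explicitly before starting the count.
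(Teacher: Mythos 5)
Your proof is correct. The paper explicitly omits the proof of this observation (``We omit its proof.''), so there is no paper argument to compare against, but your approach is the natural one and both parts check out.

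For the first assertion, the crucial structural facts you isolate are exactly right: the non‑root vertices of $F^{\ell}_R$ split into $\ell$ pairwise‑disjoint blocks with no edges between distinct blocks, so every edge incident with a non‑root vertex belongs to a unique copy of $F$; this gives $e_S=\sum_i e_{S_i}$ and $\rho(F^{\ell}_R)=\rho(F)$, after which balancedness follows by summing the per‑block inequalities. For the second assertion, rooting a spanning tree of $F-E(R)$ at a vertex of $R$ and mapping each $v\in S$ to its parent edge is a clean injective assignment of edges incident with $S$, and passing from $F$ to $F-E(R)$ is harmless precisely because $S$ avoids $R$, as you note. One could phrase the second part slightly differently (e.g.\ by observing that no component of $F[S]$ can be a tree disconnected from $V(F)\setminus S$, so each component of $F[S]$ with $k$ vertices is incident with at least $k$ edges), but your spanning‑tree argument is equivalent and arguably more direct.
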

%


For $a, b \in \mathbb{N}$ with $a-1\le b\le  2a-2$, consider an $a$-vertex path with non-root vertices labelled $1,\dots, a$ in order. Add $b-a+1$ root leaves, each adjacent to the following vertices on the path, respectively:
$$1, \left \lfloor 1+\frac{a}{b-a} \right \rfloor , \dots , \left \lfloor 1 + (b-a-1)\frac{a}{b-a} \right \rfloor, a.$$
Denote the resulting rooted tree by $T_{a,b}$ and define recursively $T_{a,b}$ for $b\ge 2a-1$ by adding one root leaf to each of the non-root vertices of $T_{a,b-a}$. It is proved in \cite{bukh2018} that $T_{a,b}$ is a balanced tree with $a$ non-root vertices and $b$ edges.




Bukh and Conlon~\cite{bukh2018} proved the following result that provides the lower bound of the extremal number of balanced bipartite rooted graphs.
\begin{lemma}[Bukh and Conlon~\cite{bukh2018}]\label{thm:lowerbound}
For every balanced bipartite rooted graph $F$ with $\rho(F)>0$, there exists a positive integer $\ell_0 = \ell_0(F)$ such that 
for all $\ell> \ell_0$, we have $\ex(n , F^\ell) = \Omega(n^{2 - \frac{1}{\rho(F)}})$.
\end{lemma}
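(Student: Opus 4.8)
The plan is to follow Bukh's \emph{random algebraic method}~\cite{bukh2015}: build a single random graph $G$ of density $\Theta(n^{2-1/\rho(F)})$ and show that it is $F^\ell$-free with positive probability. Write $S:=V(F)\setminus R$, $s:=|S|$ and $e:=e_S$, so $\rho(F)=e/s$ and the target exponent is $2-s/e$. Fix a large constant $D=D(F)$; for a prime power $q$, let $f_1,\dots,f_s$ be independent uniformly random polynomials over $\mathbb{F}_q$ of degree at most $D$ on $\mathbb{F}_q^{e}\times\mathbb{F}_q^{e}$, each symmetric in its two $e$-blocks, and let $G=G(q)$ have vertex set $\mathbb{F}_q^{e}$ with $xy\in E(G)$ iff $f_i(x,y)=0$ for every $i\in[s]$. (If one prefers $G$ bipartite, take two copies of $\mathbb{F}_q^{e}$ joined across; this is irrelevant below, as $F^\ell$ is connected and bipartite.) Then $n:=|V(G)|=q^{e}$; since every interval $[N,2N]$ contains a prime, for each large integer there is a choice of $q$ with $q^{e}$ within a constant factor of it, and padding with isolated vertices then yields the bound for all large $n$. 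Each distinct pair $x,y$ is an edge with probability exactly $q^{-s}$ (the constant term of each $f_i$ makes $f_i(x,y)$ uniform), so $\mathbb{E}\,e(G)=\binom{q^{e}}{2}q^{-s}=\Theta(q^{2e-s})=\Theta(n^{2-1/\rho(F)})$, and a routine second-moment estimate gives $e(G)\ge\tfrac14\mathbb{E}\,e(G)$ with probability at least $\tfrac34$.

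The crux is a structural property extracted from Bukh's polynomial lemma~\cite{bukh2015}: there is $C=C(F,D)$ such that, with probability bounded away from $0$, the graph $G$ is \emph{regular}, meaning that for every $W\subseteq V(G)$ with $|W|\le v(F)$, either $|N_G(W)|\le C$, or $|W|s<e$ and $|N_G(W)|\ge q^{\,e-|W|s}/C$ --- a common neighbourhood is the zero set of $|W|s$ polynomials in $\mathbb{F}_q^{e}$, and for random $\mathbf f$ it either collapses to bounded size or attains essentially its expected size $q^{e-|W|s}$. Fix $\mathbf f$ so that $G$ is regular and $e(G)\ge\tfrac14\mathbb{E}\,e(G)$; it remains to show that such a $G$ contains no $F^\ell$ once $\ell\ge\ell_0(F)$. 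Suppose it did, with roots placed at $R'\subseteq V(G)$ and vertex-disjoint embeddings $\phi_1,\dots,\phi_\ell$ of $S$. Order $S=\{v_1,\dots,v_s\}$ so that $d_i:=|N_F(v_i)\cap(R\cup\{v_1,\dots,v_{i-1}\})|$ satisfies $\sum_i d_i=e$; building a single copy greedily in this order, $\phi_j(v_i)$ is chosen from a common neighbourhood of $d_i$ already-fixed vertices, which by regularity has size $\le C$ or $\ge q^{e-d_is}/C$. Here \emph{balancedness} --- i.e.\ $s\cdot e_{S'}\ge e\cdot|S'|$ for all $\emptyset\ne S'\subseteq S$ --- enters: it implies that the set of embeddings of $S$ with roots fixed at $R'$ cannot contain a positive-dimensional family in which \emph{every} vertex of $S$ moves, since restricting such a family to the coordinates of its ``moving'' subset $S'$ gives a set of size $\ge q/C$ on which all $s\,e_{S'}$ edge-equations hold in $\mathbb{F}_q^{e|S'|}$, whereas balancedness makes this subsystem at least critically determined --- hence generically $0$-dimensional --- and regularity excludes the degenerate alternative. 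Consequently any positive-dimensional family of copies of $S$ over a fixed $R'$ has a pinned vertex common to all its members, so the number of pairwise vertex-disjoint copies of $S$ attached to any $R'$ is at most some $C'=C'(F)$; taking $\ell_0:=C'+1$ contradicts the existence of $\phi_1,\dots,\phi_\ell$, whence $\ex(n,F^\ell)\ge e(G)=\Omega(n^{2-1/\rho(F)})$ for all $\ell>\ell_0$.

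The main obstacle is this last step --- in particular, making the dimension count rigorous over $\mathbb{F}_q$ rather than $\overline{\mathbb{F}_q}$, quantifying ``generic'' and locating the pinned vertex, handling partial embeddings whose images collide or lie in exceptional algebraic position, and choosing $D$ large enough for Bukh's lemma to apply at every required scale. It is worth noting that the first-moment method alone does \emph{not} suffice: precisely because $F$ is balanced, each additional copy of $S$ in $F^\ell$ contributes $e\cdot s$ coordinates of freedom and $e\cdot s$ edge-equations, so the $\ell$-dependence cancels in $\mathbb{E}[\#\{\text{copies of }F^\ell\}]$, leaving a power of $q$ that for $|R|\ge 2$ typically exceeds $\mathbb{E}\,e(G)$; thus one cannot merely delete one edge per copy, and the structural input of Bukh's lemma is genuinely needed.
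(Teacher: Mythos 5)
This lemma is not proved in the paper: it is imported verbatim from Bukh and Conlon~\cite{bukh2018}, and the only ``argument'' the paper supplies is the short remark immediately after the statement --- namely that Bukh and Conlon stated the result only for balanced rooted \emph{trees} with an \emph{independent} root set, that their proof never uses the tree hypothesis, and that non-independent roots are handled by passing to the subgraph $F' := F - E(R)$, which satisfies $\rho(F')=\rho(F)$ and $F'^{\ell}\subseteq F^{\ell}$. Your proposal, by contrast, attempts to reconstruct the full Bukh--Conlon proof from scratch via the random algebraic method. So you are not following the paper's (non-)proof; you are re-deriving the cited result. Your sketch is aimed at the right target and the skeleton is correct: the vertex set $\mathbb{F}_q^{e}$ with $s$ random bounded-degree polynomials, edge probability $q^{-s}$, expected density $\Theta(n^{2-1/\rho(F)})$, Bukh's ``bounded or full size'' dichotomy for low-degree varieties, and the observation that $s\cdot e_{S'}\ge e\cdot|S'|$ makes the system of edge-equations at least critically determined. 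These are indeed the load-bearing ingredients in~\cite{bukh2018}. You also correctly observe, and this is worth saying, that a bare first-moment deletion argument cannot work here because balancedness makes the expected copy count $\ell$-independent.

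That said, the proposal is a sketch with genuine gaps, and you flag them yourself: the claim that any positive-dimensional family of $S$-embeddings over a fixed $R'$ ``has a pinned vertex common to all its members'' is exactly the hard algebro-geometric lemma in Bukh--Conlon, and it does not follow from the crude ``regularity'' dichotomy you state for common neighbourhoods of sets $W$ of size at most $v(F)$; the common-neighbourhood statement controls a single projection, whereas what is needed is a statement about the variety $M_{R'}$ of full $S$-embeddings and all of its sub-projections simultaneously, made uniform over all $R'$ and over the finitely many ``moving'' subsets $S'$. Making ``generically $0$-dimensional'' and ``regularity excludes the degenerate alternative'' precise over $\mathbb{F}_q$ (as opposed to $\overline{\mathbb{F}_q}$), bounding the number of irreducible components, and handling degenerate embeddings where images collide, is most of the real work of the Bukh--Conlon proof and is not supplied here. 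Additionally, a small point: your argument fixes $R'$ without imposing the edges of $E(R)$, so it actually shows $G$ contains no $(F-E(R))^{\ell}$, which is strictly stronger than what is claimed; this is fine (and mirrors the paper's remark), but it should be stated so that the reduction to an independent root set is explicit rather than incidental. In short: right method, right conjectured mechanism, but the crucial boundedness lemma is asserted rather than proved, so as written this does not constitute a proof.
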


Indeed, they stated Lemma~\ref{thm:lowerbound} only for balanced rooted trees $F$, but they did not use any assumption that $F$ is a tree. They also use the assumption that the set of root vertices to be independent. Nevertheless, we can consider a subgraph $F'$ of $F$ by removing edges between root vertices in $F$, which results in balanced bipartite rooted graphs with $\rho(F) = \rho(F')$, and apply the lemma to $F'$ to obtain the lower bound on $\ex(n , F^\ell)$.

Bukh and Conlon~\cite{bukh2018} also conjectured that for any balanced rooted tree $T$, there exists $\ell_0 = \ell_0(T)$ such that $\ex(n , T^\ell) = \Theta(n^{2 - \frac{1}{\rho(T)}})$ for all $\ell \geq \ell_0$. As Lemma~\ref{thm:lowerbound} shows that $\ex(n, T_{a,b}^{\ell}) = \Omega(n^{2 - \frac{a}{b}})$ for large $\ell$, their conjecture gives an approach to prove Conjecture~\ref{conj:es}. We remark that their conjecture cannot be generalised to balanced rooted bipartite graphs. Indeed, consider $F$ obtained from $T_{3,5}$ by identifying two root vertices attached on a first non-root vertex and a third non-root vertex on the path. The resulting graph $F$ contains $C_4$ as a subgraph, but $\ex(n , C_4) = \Theta(n^{3/2})$ while $\rho(F) = 5/3$.

For $s,t \in \mathbb{N}$, consider a $t$-star and attach $s$ leaves to each one of $t+1$ vertices of the $t$-stars. Let $D_{t,s}$ be the resulting tree rooted on its leaves. 
Note that $D_{1,s}$ and $D_{2,s}$ are isomorphic to
$T_{2,2s+1}$ and $T_{3,3s+2}$, respectively.  We call the centre of the original $t$-star  the \emph{centre} of $D_{t,s}$.
The graph $D_{t,s}$ is a tree with $(t+1)$ non-root vertices and $(s+1)(t+1)-1$ edges, and $\rho(D_{t,s}) = \frac{(t+1)(s+1)-1}{t+1}$.  Moreover, it is a balanced tree. Consider a blow-up $D_{t,s}^{\ell}$ of $D_{t,s}$. We call a vertex in $D_{t,s}^{\ell}$ a \emph{centre vertex} if it is a centre of a copy of $D_{t,s}$ in $D_{t,s}^{\ell}$ and we call a vertex in $D_{t,s}^{\ell}$ a \emph{core vertex} if it is a root vertex adjacent to all centre vertices.
We call a set of $s$ root vertices \emph{a cluster} if they are all adjacent to a same vertex in $D_{t,s}^{\ell}$. Note that the root vertices of $D_{t,s}^{\ell}$ partition into $t+1$ clusters, one of which contains all core vertices.  
\begin{figure}[h]
\centering
\begin{tikzpicture}[scale=0.8]

\draw
(0.5,3)--(0.6,2)
(1,3)--(0.6,2)
(1.5,3)--(0.6,2)
(0.5,3)--(1.4,2)
(1,3)--(1.4,2)
(1.5,3)--(1.4,2)

(0.6,2)--(-2.4,1)
(0.6,2)--(-0.4,1)
(0.6,2)--(1.6,1)
(0.6,2)--(3.6,1)
(1.4,2)--(-1.6,1)
(1.4,2)--(0.4,1)
(1.4,2)--(2.4,1)
(1.4,2)--(4.4,1)

(-2.4,1)--(-2.5,0)
(-2.4,1)--(-2,0)
(-2.4,1)--(-1.5,0)
(-1.6,1)--(-2.5,0)
(-1.6,1)--(-2,0)
(-1.6,1)--(-1.5,0)

(-0.4,1)--(-0.5,0)
(-0.4,1)--(0,0)
(-0.4,1)--(0.5,0)
(0.4,1)--(-0.5,0)
(0.4,1)--(0,0)
(0.4,1)--(0.5,0)

(1.6,1)--(1.5,0)
(1.6,1)--(2,0)
(1.6,1)--(2.5,0)
(2.4,1)--(1.5,0)
(2.4,1)--(2,0)
(2.4,1)--(2.5,0)

(3.6,1)--(3.5,0)
(3.6,1)--(4,0)
(3.6,1)--(4.5,0)
(4.4,1)--(3.5,0)
(4.4,1)--(4,0)
(4.4,1)--(4.5,0)
;
\filldraw[fill=black] (0.5, 3) circle (2pt);
\filldraw[fill=black] (1, 3) circle (2pt);
\filldraw[fill=black] (1.5, 3) circle (2pt);

\filldraw[fill=white] (0.6, 2) circle (2pt);
\filldraw[fill=white] (1.4, 2) circle (2pt);

\filldraw[fill=white] (-2.4, 1) circle (2pt);
\filldraw[fill=white] (-1.6, 1) circle (2pt);
\filldraw[fill=white] (-0.4, 1) circle (2pt);
\filldraw[fill=white] (0.4, 1) circle (2pt);
\filldraw[fill=white] (1.6, 1) circle (2pt);
\filldraw[fill=white] (2.4, 1) circle (2pt);
\filldraw[fill=white] (3.6, 1) circle (2pt);
\filldraw[fill=white] (4.4, 1) circle (2pt);

\filldraw[fill=black] (-2.5, 0) circle (2pt);
\filldraw[fill=black] (-2, 0) circle (2pt);
\filldraw[fill=black] (-1.5, 0) circle (2pt);

\filldraw[fill=black] (-0.5, 0) circle (2pt);
\filldraw[fill=black] (0, 0) circle (2pt);
\filldraw[fill=black] (0.5, 0) circle (2pt);

\filldraw[fill=black] (1.5, 0) circle (2pt);
\filldraw[fill=black] (2, 0) circle (2pt);
\filldraw[fill=black] (2.5, 0) circle (2pt);

\filldraw[fill=black] (3.5, 0) circle (2pt);
\filldraw[fill=black] (4, 0) circle (2pt);
\filldraw[fill=black] (4.5, 0) circle (2pt);

\node at (1,3.5) {$S_0$};
\node at (-2,-0.5) {$S_1$};
\node at (0,-0.5) {$S_2$};
\node at (2,-0.5) {$S_3$};
\node at (4,-0.5) {$S_4$};
\end{tikzpicture}
\caption{$D_{4,3}^2$ with core vertices $S_0$ and five clusters $S_0 , \dots , S_4$.}
\end{figure}
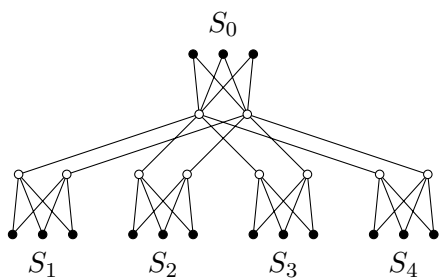

\begin{proposition}\label{prop:balanced}
For any $s,t\in \mathbb{N}$, the rooted tree $D_{t,s}$ is balanced.
\end{proposition}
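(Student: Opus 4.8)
The plan is to verify directly the defining inequality $\rho_{D_{t,s}}(S)\ge \rho(D_{t,s})$ for every non-empty $S\subseteq V(D_{t,s})\setminus R$, where $R$ is the set of leaves (roots). Recall that the non-root vertices of $D_{t,s}$ are the $t+1$ vertices of the original $t$-star: write $c$ for its centre and $v_1,\dots,v_t$ for the remaining non-root vertices. Each non-root vertex carries $s$ pendant root leaves, and in addition $c$ is joined to each $v_i$. Thus $\deg(c)=s+t$ and $\deg(v_i)=s+1$ for each $i$, so the total number of non-root vertices is $t+1$ and the number of edges incident to a non-root vertex is $(s+1)(t+1)-1$, giving $\rho(D_{t,s})=\frac{(s+1)(t+1)-1}{t+1}=s+1-\frac{1}{t+1}$, consistent with the statement.

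The key step is to compute $e_S$, the number of edges incident with a vertex of $S$, for an arbitrary non-empty $S\subseteq\{c,v_1,\dots,v_t\}$. Write $S$ according to whether it contains $c$. If $c\notin S$, say $|S|=k$ with $S\subseteq\{v_1,\dots,v_t\}$; then each $v_i\in S$ contributes its $s$ pendant edges and its edge to $c$, and these edge-sets are disjoint across distinct $v_i$, so $e_S=(s+1)k$ and $\rho_S=s+1>\rho(D_{t,s})$. If $c\in S$, say $S=\{c\}\cup S'$ with $S'\subseteq\{v_1,\dots,v_t\}$ and $|S'|=k$ (so $|S|=k+1$); then the edges incident to $S$ are: the $s$ pendant edges at $c$, all $t$ edges from $c$ to $v_1,\dots,v_t$ (already counting the $c$–$v_i$ edges for $v_i\in S'$), and the $s$ pendant edges at each $v_i\in S'$. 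Hence $e_S=s+t+sk$ and
$$\rho_S=\frac{s+t+sk}{k+1}.$$
It remains to check $\frac{s+t+sk}{k+1}\ge \frac{(s+1)(t+1)-1}{t+1}=\frac{st+s+t}{t+1}$ for all $0\le k\le t$. Cross-multiplying (both denominators positive), this is equivalent to $(t+1)(s+t+sk)\ge (k+1)(st+s+t)$, i.e.\ after expanding and cancelling, to $(t+1)(s+t)+sk(t+1)\ge (st+s+t)+k(st+s+t)$, which simplifies to $t^2+skt\ge kst+kt$, i.e.\ $t^2\ge kt$, i.e.\ $k\le t$. Since $k=|S'|\le t$ always holds, the inequality is valid, with equality exactly when $k=t$, i.e.\ when $S=V(D_{t,s})\setminus R$ (as it must be). This proves $D_{t,s}$ is balanced.

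I do not expect any serious obstacle here: the whole argument is a short case analysis followed by one elementary inequality, and the structure of $D_{t,s}$ is simple enough that $e_S$ has a clean closed form. The only point requiring a little care is the bookkeeping of edges incident to $c$ in the case $c\in S$ — one must not double-count the $c$–$v_i$ edges for $v_i\in S'$ — but once that is handled correctly the reduction to $k\le t$ is immediate.
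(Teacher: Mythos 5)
Your proposal is correct and follows essentially the same route as the paper: split on whether the centre $c$ lies in $S$, compute $e_S$ in closed form in each case, and verify $\rho_S\ge\rho(D_{t,s})$. The paper's proof handles the inequality in the $c\in S$ case slightly more slickly by noting $\rho(S)=s+\frac{t}{|S|}$ is decreasing in $|S|$, but this is a cosmetic difference from your cross-multiplication.
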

\begin{proof}
Let $R$ be the set of leaves of $D_{t,s}$ which is precisely the root set of $D_{t,s}$.
Consider a non-empty set $S \subseteq V(D_{t,s}) \setminus R$. We have $1\leq |S|\leq t+1$.
If $S$ contains the centre of $D_{t,s}$, then 
$$\rho(S) = \frac{s|S|+t}{|S|} \geq \frac{s(t+1)+t}{t+1}=\rho(D_{t,s}).$$
If $S$ does not contain the centre, then 
$$\rho(S)  = \frac{(s+1)|S|}{|S|} = s + 1 \geq \rho(D_{t,s}).$$ Therefore, $D_{t,s}$ is balanced.
\end{proof}

\subsection{Dependent random choice and embedding  $D_{t,s}^{\ell}$}

The following variation of dependent random choice (Lemma~\ref{lem:drc}) together with the embedding lemma (Lemma~\ref{lem:embed}) will be useful for estimating $\ex(n,D_{t,s}^{\ell})$. For an excellent survey of dependent random choice, see~\cite{fox2011}.

\begin{lemma}\label{lem:drc}
Let $d,t\in \mathbb{N}$ and $G$ be a bipartite graph with
a vertex partition $(A,B)$.
If each vertex in $A$ has degree at least $d \geq \frac{2t |A|^{s-1}}{s!}$, then there exist a vertex $u\in B$ and a subset $A'\subseteq \Gamma_{G}(u,A)$ of size at least $\frac{d|A|}{2|B|}$ satisfying $d_{G}(S)\geq t$ for every $S\in \binom{A'}{s}$.
\end{lemma}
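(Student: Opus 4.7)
The plan is to apply a standard dependent random choice argument: pick $u \in B$ uniformly at random, analyse its neighbourhood in $A$, and then delete a small number of vertices to destroy all the $s$-subsets with small common degree.

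More precisely, let $u \in B$ be chosen uniformly at random and set $T := \Gamma_G(u,A)$. Since each vertex $a \in A$ has degree at least $d$,
\[
\expect[|T|] \;=\; \sum_{a \in A} \frac{d_G(a)}{|B|} \;\geq\; \frac{d|A|}{|B|}.
\]
Call an $s$-subset $S \in \binom{A}{s}$ \emph{bad} if $d_G(S) < t$. For such $S$, the probability $S \subseteq T$ equals $d_G(S)/|B| < t/|B|$. Letting $Y$ be the number of bad $s$-subsets contained in $T$, we obtain
\[
\expect[Y] \;\leq\; \binom{|A|}{s}\cdot\frac{t}{|B|} \;\leq\; \frac{t\,|A|^{s}}{s!\,|B|}.
\]
The hypothesis $d \geq \frac{2t|A|^{s-1}}{s!}$ is exactly what is needed to guarantee $\expect[Y] \leq \frac{d|A|}{2|B|}$, and therefore
\[
\expect\!\left[|T| - Y\right] \;\geq\; \frac{d|A|}{|B|} - \frac{d|A|}{2|B|} \;=\; \frac{d|A|}{2|B|}.
\]
Fix some $u \in B$ attaining $|T|-Y \geq \frac{d|A|}{2|B|}$, and form $A'$ from $T$ by deleting one vertex from each bad $s$-subset of $T$. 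Then $|A'| \geq \frac{d|A|}{2|B|}$, and by construction no bad $s$-subset survives, so every $S \in \binom{A'}{s}$ satisfies $d_G(S) \geq t$.

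There is no real obstacle here; the only thing to be careful about is the counting step where the bound $\binom{|A|}{s} \leq |A|^{s}/s!$ is used in tandem with the hypothesis on $d$ to make the two terms in the expectation balance, yielding the claimed factor of $1/2$.
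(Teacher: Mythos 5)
Your proof is correct and takes essentially the same dependent random choice approach as the paper: pick $u\in B$ at random, lower-bound $\expect[|\Gamma_G(u,A)|]$, upper-bound the expected number of bad $s$-subsets in the neighbourhood using the hypothesis $d\geq 2t|A|^{s-1}/s!$, and delete one vertex per bad set.
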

\begin{proof}
Choose a vertex $u \in B$ uniformly at random, and consider a set $X := \Gamma_G(u)\subseteq A$. For each $v \in A$, the probability that $v\in X$ is $\mathbb{P}(v \in X) = \frac{d_G(v)}{|B|} \geq \frac{d}{|B|}$. Hence we obtain
$\mathbb{E}[|X|] \geq \frac{d|A|}{|B|}.$

We say a set $S\in \binom{A}{s}$ of size $s$ is \emph{bad} if 
$d_{G}(S) < t$. Let $Y$ be the random variable indicating the number of bad sets in $\binom{X}{S}$. As $\mathbb{P}(S\subseteq X) = \frac{d_{G}(S) }{|B|}< \frac{t}{|B|}$, we have 
$$\mathbb{E}[Y] \leq \frac{t}{|B|}\cdot\left|\binom{X}{s} \right| \leq \frac{t |A|^{s}}{s!|B|}.$$ Let $X'$ be the set obtained from $X$ by deleting one element from each bad set $S\in \binom{X}{s}$, then $|X'| \geq |X|-Y$, and
$$\mathbb{E}[|X'|] \geq \mathbb{E}[|X|] - \mathbb{E}[Y] \geq \frac{d|A|}{|B|} - \frac{t |A|^{s}}{s! |B|} \geq \frac{d|A|}{2|B|}.$$
This implies that there exists a choice $A' \subseteq \Gamma_G(u)$ with $|A'| \geq \frac{d|A|}{2|B|}$ and $d_{G}(S)\geq t$ for each $S\in \binom{A'}{s}$, as desired.
\end{proof}

\begin{lemma}\label{lem:embed}
Let $s,t,\ell \in \mathbb{N}$ and $G$ be a graph. 
Let $W,A\subseteq V(G)$ be sets satisfying $|W|=s$ and $A=N_{G}(W)$. 
If $|A|\geq st+\ell$ and each $S\in \binom{A}{s+1}$ satisfies $|N_{G}(S)\setminus (A \cup W)| \geq \ell t$, then $G$ contains $D_{t,s}^{\ell}$ as a subgraph.
\end{lemma}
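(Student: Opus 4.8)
The plan is to read off, from the structure of $D_{t,s}^{\ell}$, which role each of $W$ and $A$ should play, and then build a copy of $D_{t,s}^{\ell}$ greedily. Recall that $D_{t,s}^{\ell}$ consists of a core cluster $S_0$ of $s$ roots, $\ell$ centre vertices each adjacent to all of $S_0$, and $t$ middle vertices attached to each centre vertex; for each $i\in[t]$ the $i$-th middle vertices of all $\ell$ copies share one common cluster $S_i$ of $s$ roots. Since every centre vertex is adjacent to all of $S_0$, I would set $S_0:=W$ and look for the centre vertices inside $A=N_G(W)$. The key observation is that one is free to place the non-core clusters $S_1,\dots,S_t$ inside $A$ as well (in $D_{t,s}^{\ell}$ there are no edges between clusters, so this only adds edges to the copy): then a candidate middle vertex joined to a cluster $S_i$ and to a centre vertex $c$ must lie in $N_G(S_i\cup\{c\})$, where $S_i\cup\{c\}$ is an $(s+1)$-subset of $A$ — which is exactly what the hypothesis of the lemma controls.

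Concretely, I would proceed in three steps. First, using $|A|\ge st+\ell$, pick $\ell$ distinct vertices $c_1,\dots,c_\ell\in A$ to serve as the centre vertices, and from the at least $st$ remaining vertices of $A$ choose $t$ pairwise disjoint $s$-sets $S_1,\dots,S_t$ to serve as the non-core clusters. Second, for each pair $(i,k)\in[t]\times[\ell]$ the set $S_i\cup\{c_k\}$ belongs to $\binom{A}{s+1}$, so by hypothesis $|N_G(S_i\cup\{c_k\})\setminus(A\cup W)|\ge \ell t$; since exactly $\ell t$ middle vertices need to be chosen and all of them lie outside $A\cup W$, I would select them one at a time, at each step avoiding the fewer than $\ell t$ middle vertices already chosen, obtaining pairwise distinct $m_{i,k}\in N_G(S_i\cup\{c_k\})\setminus(A\cup W)$. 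Third, I would verify this is a valid embedding: $A\cap W=\emptyset$ because $G$ is simple (a vertex of $W$ would need a loop to lie in $N_G(W)$); the sets $W=S_0,S_1,\dots,S_t$ are pairwise disjoint and disjoint from $\{c_1,\dots,c_\ell\}$ by construction; the $m_{i,k}$ avoid $A\cup W$ and are pairwise distinct; and finally $c_k$ is adjacent to all of $W=S_0$ (as $c_k\in A$) and to $m_{1,k},\dots,m_{t,k}$, while each $m_{i,k}$ is adjacent to all of $S_i$. These are precisely the adjacencies of $D_{t,s}^{\ell}$, so $G$ contains $D_{t,s}^{\ell}$.

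The only real content lies in the encoding described in the first paragraph: recognising that the clusters $S_i$ should be placed inside $A$ so that $S_i\cup\{c_k\}$ becomes an $(s+1)$-set to which the hypothesis applies. Once this is set up there is no analytic difficulty — choosing the middle vertices is a one-line greedy argument, and the numerical inequalities $|A|\ge st+\ell$ and $|N_G(S_i\cup\{c_k\})\setminus(A\cup W)|\ge \ell t$ are used with no slack. The main thing requiring care is simply keeping the disjointness bookkeeping straight among $W$, the centres $c_k$, the clusters $S_i$, and the middle vertices $m_{i,k}$.
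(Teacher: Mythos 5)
Your proof is correct and takes essentially the same route as the paper's: place the core cluster at $W$, place the $\ell$ centres and the $t$ non‑core clusters inside $A$, and then for each missing middle vertex observe that its $s+1$ already‑placed neighbours form an $(s+1)$‑set in $A$, so the hypothesis supplies at least $\ell t$ candidates outside $A\cup W$, allowing a greedy injective choice. The only cosmetic difference is that you name the images ($c_k$, $S_i$, $m_{i,k}$) explicitly, whereas the paper phrases the same greedy step in terms of a partial embedding $\psi$ extended one vertex at a time.
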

\begin{proof}
Recall that $D_{t,s}^{\ell}$ is obtained from the disjoint unions of $D_{t,s}$ by identifying corresponding leaves which are root vertices.
Map all $s$ core vertices into $W$. Further, we injectively map the remaining $st$ non-core root vertices and the $\ell$ centre vertices into $A$. This is possible as we have $|W|\geq s$ and $|A|\geq st+\ell$ with $W\cap A=\emptyset$.
Let $\psi$ be the injective function we have defined, which embeds all but $\ell t$ vertices of $D^{\ell}_{t,s}$ into $W\cup A$. 

Each vertex $v\in D^{\ell}_{t,s}$, with $\psi(v)$ not yet defined, is adjacent to $s$ root vertices and one centre vertex in $D^{\ell}_{t,s}$. As these $s+1$ neighbours of $v$ are injectively embedded in $A$, the set $S_v$ of their $\psi$-images is in $\binom{A}{s+1}$. Hence we have $|S'_v| \geq \ell t$ where $S'_v:= N_{G}(S_v)\setminus (A \cup W)$.
As there are $\ell t$ vertices $v$ for which $\psi(v)$ is not yet defined 
and $|S'_v|\geq \ell t$ holds for all such vertices $v$, we can choose $\psi(v)\in S'_v$ for all these vertices so that $\psi$ is still injective.
By the construction of $\psi$, it is easy to see that $\psi(D_{t,s}^{\ell})\subseteq G$. Hence $G$ contains $D_{t,s}^{\ell}$ as a subgraph.
\end{proof}

\section{The extremal number of $D_{t,s}^\ell$.}
\label{sec:Dts}
In this section, we prove the following theorem. Here, we write $D_{t-1,s-1}^{\ell}$ instead of $D_{t,s}^{\ell}$ only to make the formulas simpler.

\begin{theorem}\label{thm:mult_main}
Let $s,t\in \mathbb{N}\setminus \{1\}$. Then there exists $\ell_0$ such that for all $\ell \geq\ell_0$, we have $\ex(n, D_{t-1,s-1}^{\ell}) = \Theta(n^{2-\frac{t}{st-1}})$. 
\end{theorem}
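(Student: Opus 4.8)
The plan is this. The lower bound will come for free from the machinery already set up: $D_{t-1,s-1}$ is a tree, hence bipartite, it is balanced by Proposition~\ref{prop:balanced}, and $\rho(D_{t-1,s-1})=\frac{st-1}{t}>0$, so Lemma~\ref{thm:lowerbound} applies and gives $\ex(n,D_{t-1,s-1}^{\ell})=\Omega(n^{2-1/\rho(D_{t-1,s-1})})=\Omega(n^{2-t/(st-1)})$ for every $\ell$ above the threshold $\ell_0(D_{t-1,s-1})$ it provides. Everything else is the matching upper bound: I would fix $\ell$ large, let $C=C(s,t,\ell)$ be a large constant, take an $n$-vertex graph $G$ with $e(G)\ge Cn^{2-t/(st-1)}$, and aim to find $D_{t-1,s-1}^{\ell}\subseteq G$.

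The first step is to reduce the embedding task to a single combinatorial pattern by means of Lemma~\ref{lem:embed}, applied with its parameters $(s,t)$ replaced by $(s-1,t-1)$: that lemma delivers $D_{t-1,s-1}^{\ell}$ as soon as we have a set $W$ with $|W|=s-1$ and a set $A=N_G(W)$ with $|A|\ge(s-1)(t-1)+\ell$ such that $|N_G(S)\setminus(A\cup W)|\ge\ell(t-1)$ for every $S\in\binom{A}{s}$. Deleting from $W$ all edges to the vertices of its common neighbourhood that we do not want to keep only passes to a subgraph, so we may in fact prescribe $A$ to be any bounded-size subset of the common neighbourhood; and since $|A\cup W|$ is then a constant, it is enough to secure $|A|=m:=(s-1)(t-1)+\ell$ together with the stronger property that every $S\in\binom{A}{s}$ has common degree at least some constant $C_0=C_0(s,t,\ell)$ in $G$ itself. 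In other words, the upper bound reduces to finding, in any graph with $\Omega(n^{2-t/(st-1)})$ edges, an $(s-1)$-set $W$ and $m$ vertices in $N_G(W)$ every $s$ of which have at least $C_0$ common neighbours.

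To produce such a pattern I would first clean $G$ down to minimum degree at least $d:=cn^{1-t/(st-1)}$ for a suitable $c=c(C)$, and then combine two ingredients. Since $t\ge2$ we have $2-\frac1{s-1}<2-\frac t{st-1}$, so by the K\"ov\'ari--S\'os--Tur\'an theorem $G$ already contains $K_{s-1,p}$ for some $p=\Theta(c^{s-1}n^{(t-1)/(st-1)})$, which supplies a candidate $W$ and a polynomially large set $N_0\subseteq N_G(W)$. The second, and decisive, ingredient is dependent random choice in the form of Lemma~\ref{lem:drc}, applied inside an appropriately chosen bipartite subgraph --- and iterated, if needed, so as to fix $W$ vertex by vertex while refining the common neighbourhood at each stage --- in order to pass to a subset in which \emph{every} $s$-subset has common degree at least $C_0$. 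Here the balancedness of $D_{t-1,s-1}$ is exactly what makes the exponents match: the surviving set stays polynomially large (of order $n^{(t-1)/(st-1)}$) while the ``poor'' $s$-subsets one has to throw away form a lower-order family, so that a uniformly random $m$-subset of the final set is free of poor $s$-subsets in expectation and hence yields the configuration demanded by Lemma~\ref{lem:embed}.

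The hard part is this last step. The obstruction is that a graph with only $\Theta(n^{2-t/(st-1)})$ edges has the property that an average $s$-subset of $V(G)$ has common degree of order $n^{-1/(st-1)}=o(1)$, so richness cannot be harvested from any crude count and must instead be coaxed out by running Lemma~\ref{lem:drc} between two vertex classes of deliberately unbalanced sizes --- while keeping its hypothesis $d\ge\frac{2t|A|^{s-1}}{s!}$ in force and its output $A'$, of size $\gtrsim d|A|/(2|B|)$, large enough to continue the iteration. Verifying that all of these size constraints can be met simultaneously --- i.e.\ that the balancedness of $D_{t-1,s-1}$ leaves exactly enough room --- is the technical core of the argument, and where I expect the bulk of the effort to go.
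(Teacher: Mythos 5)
Your lower bound is exactly the paper's: $D_{t-1,s-1}$ is a balanced rooted tree by Proposition~\ref{prop:balanced} and Observation~\ref{obs:easy}, with $\rho(D_{t-1,s-1})=\frac{st-1}{t}$, so Lemma~\ref{thm:lowerbound} gives $\ex(n,D_{t-1,s-1}^{\ell})=\Omega(n^{2-t/(st-1)})$ for large $\ell$. Your reduction of the upper bound to Lemma~\ref{lem:embed} (after passing to bounded minimum degree and a bipartite subgraph) is also how the paper starts, and your observation that one may trim $A$ to any constant-size subset of $N_G(W)$ by deleting edges is fine.

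The gap is in the ``decisive ingredient.'' The configuration you aim for --- an $(s-1)$-set $W$ and a constant-size $A\subseteq N_G(W)$ in which \emph{every} $s$-subset has common degree at least a constant $C_0$ --- is a strictly stronger target than what the paper actually produces, and one application of Lemma~\ref{lem:drc} does not reach it. After fixing $W$ with $|N_G(W)|\approx n^{(t-1)/(st-1)}$ (whether by KST or the paper's averaging), applying Lemma~\ref{lem:drc} to $A^*=N_G(W)$ against $B^*=\Gamma_G(A^*)\setminus W$ returns a set of size $\gtrsim d|A^*|/|B^*|$, and when $|B^*|=\Theta(n)$ (which you cannot rule out) this is $\approx n^{-1/(st-1)}<1$: the output is empty. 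The alternative you sketch, fixing $W$ ``vertex by vertex'' via DRC, fails the hypothesis $d\ge\frac{2t|A|^{s-1}}{s!}$ already at the first step, since $|A_0|=\Theta(n)$ while $d=n^{1-t/(st-1)}$; and even seeding with $A_0=N_G(W_0)$ for $|W_0|=s-2$ the hypothesis $|A_0|^{s-1}\lesssim d$ fails for all $s\ge 3$. So your claim that ``the surviving set stays polynomially large'' after DRC is not correct in the regime that matters. The idea that is missing is the dichotomy-plus-iteration structure of the paper's Stage~2: having fixed $W=L_0$ by averaging and chosen $L_1\subseteq N_G(L_0)$ of size $\ell n^{(t-1)/(st-1)}$, one either has expansion ($|\Gamma_G(A)\setminus L_0|\ge 2d|A|$ for every $A\subseteq L_1$), or DRC + Lemma~\ref{lem:embed} yield the desired subgraph immediately \emph{as a contradiction}, not as the terminal embedding step. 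Expansion then feeds Claim~\ref{claim:repetition}, which embeds the $t-1$ non-core clusters one at a time while shrinking the pool of candidate centre vertices by exactly a factor $n^{-1/(st-1)}$ per cluster, so that after $t-1$ rounds $\ell$ candidates survive. The final copy of $D_{t-1,s-1}^{\ell}$ is assembled directly from the matchings $f_i$ and the sets $C_i$; the relevant $s$-subsets $C_i\cup\{a\}$ need only \emph{one} designated common neighbour $f_i(a)$, with injectivity handled by disjointness of the $B_i$ --- so Lemma~\ref{lem:embed}'s strong common-degree hypothesis is never in fact satisfied, and there is no reason to expect it can be. In short, the cluster-by-cluster iteration with its $n^{-1/(st-1)}$ loss rate, and the role of DRC + Lemma~\ref{lem:embed} as a contradiction tool inside an expansion argument, are the ideas your proposal is missing and where it would stall.
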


As $D_{t-1,s-1}$ is balanced rooted graphs due to Observation~\ref{obs:easy} and Proposition~\ref{prop:balanced},  the following Lemma~\ref{lem:mult_main} together with Lemma~\ref{thm:lowerbound} implies Theorem~\ref{thm:mult_main}.

\begin{lemma}\label{lem:mult_main}
For all $\ell,s,t \in \mathbb{N}\setminus\{1\}$, we have $\ex(n , D_{t-1,s-1}^{\ell}) = O(n^{2 - \frac{t}{st-1}})$. 
\end{lemma}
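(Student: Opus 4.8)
The plan is to prove the upper bound $\ex(n,D_{t-1,s-1}^\ell)=O(n^{2-\frac{t}{st-1}})$ by a standard ``suppose not, then find the forbidden graph'' argument driven by the dependent random choice lemma (Lemma~\ref{lem:drc}) and the embedding lemma (Lemma~\ref{lem:embed}). Let $G$ be an $n$-vertex graph with $e(G)\ge C n^{2-\frac{t}{st-1}}$ for a large constant $C=C(\ell,s,t)$; I want to locate inside $G$ the structure required by Lemma~\ref{lem:embed}, namely an $s$-set $W$, its common neighbourhood $A=N_G(W)$ with $|A|\ge s t+\ell$, such that every $(s+1)$-subset $S\subseteq A$ has $|N_G(S)\setminus(A\cup W)|\ge \ell t$ (note the exponent here comes from writing $D_{t-1,s-1}$, so in the notation of the lemmas the relevant parameters are ``$s-1$'' core vertices and ``$t-1$'' spokes — I will carry the shift carefully, but will describe the idea with $s,t$ for readability).

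First I would clean up $G$: pass to a subgraph of minimum degree at least $\tfrac12 e(G)/n \ge \tfrac{C}{2} n^{1-\frac{t}{st-1}}$ (deleting low-degree vertices loses at most half the edges), and call this $G'$ with vertex set of size $\le n$ and $d:=\delta(G')\ge c\, n^{1-\frac{t}{st-1}}$. Second, I apply Lemma~\ref{lem:drc} with $A=B=V(G')$ and the target common degree parameter chosen to be something like $t':=\ell t + |A \cup W|$ absorbed into a constant times $n^{1-\frac{t}{st-1}}$ order — more precisely I want: after cleaning, there is a vertex $u$ and a set $A_0\subseteq\Gamma(u)$ of size $\ge \tfrac{d|V(G')|}{2n}\ge c' n^{1-\frac{t}{st-1}}$ such that every $s$-subset of $A_0$ has common degree $\ge t_0$ in $G'$, where $t_0$ is a polynomial-in-$n$ quantity of order $n^{1 - \frac{(s-1)t}{st-1}}$ or so. The point of the exponent $2-\frac{t}{st-1}$ is exactly that it is the threshold at which this iterated DRC closes up: applying the common-degree guarantee a bounded number of times should yield, for every $s$-subset $S$ of the final large set, a common neighbourhood of size at least $\ell t$ outside any fixed bounded set. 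I would iterate: having $A_0$ with all $s$-subsets of common degree $\ge t_0$, I look at common neighbourhoods of $s$-subsets; since $t_0$ itself is still polynomially large, I can again prune via a union bound over the (polynomially many) bad $(s+1)$-subsets so that the surviving set $A_1\subseteq A_0$ still has size $\ge c'' n^{1-\frac{t}{st-1}}\ge st+\ell$, while every $(s+1)$-subset of $A_1$ has $\ge \ell t$ common neighbours — and then setting $W=\Gamma(u)$-side $s$-set appropriately (or keeping $u$ together with $s-1$ further vertices, matching the ``$s-1$ core vertices'' in $D_{t-1,s-1}$), I hand this to Lemma~\ref{lem:embed}.

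The bookkeeping I expect to be the genuine content is getting the arithmetic of the exponents to line up: one must check that with $d\asymp n^{1-\frac{t}{st-1}}$ the DRC hypothesis $d\ge \frac{2t|A|^{s-1}}{s!}$ is satisfied with $|A|\asymp n^{1-\frac{t}{st-1}}$ for the right value of the common-degree threshold, i.e.\ that $n^{1-\frac{t}{st-1}} \gg \bigl(n^{1-\frac{t}{st-1}}\bigr)^{s-1}\cdot(\text{target common degree})/n$, and that after one (or the necessary number of) applications the resulting common-degree bound is still $\ge \ell t$ \emph{and} the resulting vertex set is still $\ge st+\ell$; the identity $\frac{t}{st-1}$ is precisely what makes $(s-1)\cdot\frac{t}{st-1} + \frac{1}{st-1}\cdot(\cdots)$ balance. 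Concretely, if after cleaning $|A|\ge c n^{1-\frac{t}{st-1}}$ and every $s$-set has common degree $\ge c n^{1 - s\cdot\frac{t}{st-1} + \frac{t-1}{st-1}}$ — one should verify this exponent is $1-\frac{(s-1)t - (t-1)}{st-1}\cdot$something $\ge 0$ — then a single further pruning over $O(|A|^{s+1})$ bad $(s+1)$-sets, each destroying at most the vertices whose removal would be needed, keeps a positive proportion; I would then simply feed $W\cup A_1$ into Lemma~\ref{lem:embed}. The main obstacle, then, is not any deep idea but making sure the chain of inequalities in the iteration is monotone in the right direction with room to spare for the additive constants $st+\ell$ and $\ell t$; once the exponent accounting is pinned down the embedding is immediate from Lemma~\ref{lem:embed}, and combined with Lemma~\ref{thm:lowerbound} (applicable since $D_{t-1,s-1}$ is balanced by Observation~\ref{obs:easy} and Proposition~\ref{prop:balanced}) this yields Theorem~\ref{thm:mult_main}.
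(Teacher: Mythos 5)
Your central step does not survive the arithmetic it defers to, and the gap is not bookkeeping but the absence of the main idea. With $A=B=V(G')$ and $d\asymp n^{1-\frac{t}{st-1}}$, the hypothesis of Lemma~\ref{lem:drc} reads $d\ge \frac{2t_0|A|^{s-1}}{s!}$, which caps the achievable common-degree parameter at $t_0\le \frac{s!}{2}\,d/n^{s-1}=\frac{s!}{2}\,n^{2-s-\frac{t}{st-1}}$; already for $s=2$ this is at most $n^{-\frac{t}{2t-1}}<1$, and for $s>2$ it is far smaller. So the claimed ``polynomial-in-$n$'' bound $t_0\asymp n^{\frac{t-1}{st-1}}$ is not obtainable from that application of DRC (the quantity $n^{\frac{t-1}{st-1}}$ does appear at this exponent threshold, but as the \emph{size} of the common neighbourhood of a well-chosen $(s-1)$-set, not as a common degree that DRC hands you). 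Independently, the proposed iteration has no mechanism: large common degree for all $s$-subsets of $A_0$ does not transfer to $(s+1)$-subsets, and ``prune one vertex from each bad $(s+1)$-set'' is a union bound over $\Omega(|A_0|^{s+1})$ sets, which wipes out $A_0$ unless the bad proportion is already tiny — which is exactly the statement one would need to prove. In effect you are trying to manufacture, in one or two sweeps, the full hypothesis of Lemma~\ref{lem:embed} for a single fixed $W$; at the threshold exponent $2-\frac{t}{st-1}$ this is simply not available by direct dependent random choice.

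The paper's proof is organised quite differently, and that reorganisation is the content. After passing to a \emph{bipartite} subgraph of minimum degree $\Omega(qd)$, it builds a four-layer structure $L_0,L_1,L_2,L_3$: an $(s-1)$-set $L_0$ with $|N(L_0)|=\Omega(q\,n^{\frac{t-1}{st-1}})$ found by averaging over $\binom{V}{s-1}$, a candidate-centre set $L_1\subseteq N(L_0)$ of size $\ell\,n^{\frac{t-1}{st-1}}$, pairwise disjoint sets $\Gamma_u\subseteq N(u)$ of size $2d$ for $u\in L_1$ obtained via Hall's theorem (Claim~\ref{cl: Gamma}), and $L_2,L_3$ the next two layers. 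The engine is Claim~\ref{claim:repetition}, applied $t-1$ times, each time producing a new non-core cluster $C_i\subseteq L_3$ of size $s-1$ together with a matching $f_i$ into $L_2$, at the cost of shrinking the candidate-centre set $A_i$ by only a factor $n^{-\frac{1}{st-1}}$ per step; after $t-1$ steps $|A_{t-1}|\ge \ell$ survives, and the copy of $D_{t-1,s-1}^\ell$ is assembled directly from $L_0$, the chosen centres, the $f_i(A)$'s and the $C_i$'s — \emph{not} by a final appeal to Lemma~\ref{lem:embed}. Lemmas~\ref{lem:drc} and~\ref{lem:embed} appear only inside the contradiction arguments of Claims~\ref{cl: Gamma} and~\ref{claim:repetition}: if a Hall or expansion condition failed, DRC would feed Lemma~\ref{lem:embed} and produce the forbidden graph. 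Your proposal collapses this into one DRC application followed by pruning, which loses precisely the cluster-by-cluster recursion that makes the exponent $2-\frac{t}{st-1}$ reachable.
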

\begin{proof}

As proved in  \cite{erdos1970}, any $m$-vertex graph with average degree $m^{\alpha}$ contains an $\widetilde{m}$-vertex graph with minimum degree at least $ \frac{1}{10\cdot 2^{\alpha^{-2}}} \widetilde{m}^{\alpha}$ with $\widetilde{m}\geq m^{\frac{\alpha(1-\alpha)}{1+\alpha}}$. Also any graph contains a spanning bipartite subgraph with the minimum degree at least the half of the original graph. Hence, it suffices to prove that for given $\ell,t$ and $s$ there exist $Q, n_0\in \mathbb{N}$ such that for any $n\geq n_0$, any $n$-vertex bipartite graph with minimum degree at least $Q n^{1- \frac{t}{st-1}}$ contains $D_{t-1,s-1}^{\ell}$ as a subgraph. 

Choose $n, q \in \mathbb{N}$ with $n\gg  q \gg \ell,s,t$ and let $d := n^{1- \frac{t}{st-1}}$. Then we have 
\begin{align}\label{eq:dn}
\frac{d^{s-1}}{n^{s-2}} = n^{\frac{t-1}{st-1}} \enspace \text{and} \enspace 
\frac{d^{s}}{n^{s-1}} = n^{-\frac{1}{st-1}}.
\end{align}
To derive a contradiction, assume that $G$ is an $n$-vertex bipartite graph satisfying $\delta(G) \geq 4q d$ that does not contain $D_{t-1,s-1}^{\ell}$ as a subgraph. Let $V:=V(G)$.

Recall that $D_{t-1,s-1}^{\ell}$ consists of $s-1$ core vertices, $\ell$ centre vertices, $(s-1)(t-1)$ non-core root vertices, and remaining $\ell(t-1)$ vertices that are neither roots nor centre. Also recall that the root vertices of $D_{t-1,s-1}^{\ell}$ partition into $t$ clusters each of which contains $s-1$ vertices.  \newline

\noindent{\bf Proof strategy.}
We first choose pairwise disjoint vertex sets $L_0, L_1, L_2$ and $L_3$ with $L_{i+1}\subseteq \Gamma_{G}(L_i)$ and $|L_{i+1}|$ is sufficiently larger than $|L_i|$.
We aim to embed the core vertices into $L_0$, the centre vertices into $L_1$, non-root neighbours of centre vertices into $L_2$ and the non-core root vertices to $L_3$. We let $S_1,\dots, S_{t-1}$ be the non-core clusters.  

\begin{figure}[h]
\centering
\begin{tikzpicture}[scale=0.8]
\draw
(0.5,3)--(0.6,2)
(1,3)--(0.6,2)
(1.5,3)--(0.6,2)
(0.5,3)--(1.4,2)
(1,3)--(1.4,2)
(1.5,3)--(1.4,2)

(0.6,2)--(-2.4,1)
(0.6,2)--(-0.4,1)
(0.6,2)--(1.6,1)
(0.6,2)--(3.6,1)
(1.4,2)--(-1.6,1)
(1.4,2)--(0.4,1)
(1.4,2)--(2.4,1)
(1.4,2)--(4.4,1)

(-2.4,1)--(-2.5,0)
(-2.4,1)--(-2,0)
(-2.4,1)--(-1.5,0)
(-1.6,1)--(-2.5,0)
(-1.6,1)--(-2,0)
(-1.6,1)--(-1.5,0)

(-0.4,1)--(-0.5,0)
(-0.4,1)--(0,0)
(-0.4,1)--(0.5,0)
(0.4,1)--(-0.5,0)
(0.4,1)--(0,0)
(0.4,1)--(0.5,0)

(1.6,1)--(1.5,0)
(1.6,1)--(2,0)
(1.6,1)--(2.5,0)
(2.4,1)--(1.5,0)
(2.4,1)--(2,0)
(2.4,1)--(2.5,0)

(3.6,1)--(3.5,0)
(3.6,1)--(4,0)
(3.6,1)--(4.5,0)
(4.4,1)--(3.5,0)
(4.4,1)--(4,0)
(4.4,1)--(4.5,0)
;
\filldraw[fill=black] (0.5, 3) circle (2pt);
\filldraw[fill=black] (1, 3) circle (2pt);
\filldraw[fill=black] (1.5, 3) circle (2pt);

\filldraw[fill=white] (0.6, 2) circle (2pt);
\filldraw[fill=white] (1.4, 2) circle (2pt);

\filldraw[fill=white] (-2.4, 1) circle (2pt);
\filldraw[fill=white] (-1.6, 1) circle (2pt);
\filldraw[fill=white] (-0.4, 1) circle (2pt);
\filldraw[fill=white] (0.4, 1) circle (2pt);
\filldraw[fill=white] (1.6, 1) circle (2pt);
\filldraw[fill=white] (2.4, 1) circle (2pt);
\filldraw[fill=white] (3.6, 1) circle (2pt);
\filldraw[fill=white] (4.4, 1) circle (2pt);

\filldraw[fill=black] (-2.5, 0) circle (2pt);
\filldraw[fill=black] (-2, 0) circle (2pt);
\filldraw[fill=black] (-1.5, 0) circle (2pt);

\filldraw[fill=black] (-0.5, 0) circle (2pt);
\filldraw[fill=black] (0, 0) circle (2pt);
\filldraw[fill=black] (0.5, 0) circle (2pt);

\filldraw[fill=black] (1.5, 0) circle (2pt);
\filldraw[fill=black] (2, 0) circle (2pt);
\filldraw[fill=black] (2.5, 0) circle (2pt);

\filldraw[fill=black] (3.5, 0) circle (2pt);
\filldraw[fill=black] (4, 0) circle (2pt);
\filldraw[fill=black] (4.5, 0) circle (2pt);

\node at (1,3.4) {\tiny Core vertices};
\node at (-4,3) {$L_0$};
\node at (-4,2) {$L_1$};
\node at (-4,1) {$L_2$};
\node at (-4,0) {$L_3$};
\node at (-2,-0.5) {$S_1$};
\node at (0,-0.5) {$S_2$};
\node at (2,-0.5) {$S_3$};
\node at (4,-0.5) {$S_4$};
\end{tikzpicture}
\caption{An embedding of $D_{4,3}^2$ with respect to the levels $L_0, L_1, L_2$ and $L_3$.}
\end{figure}
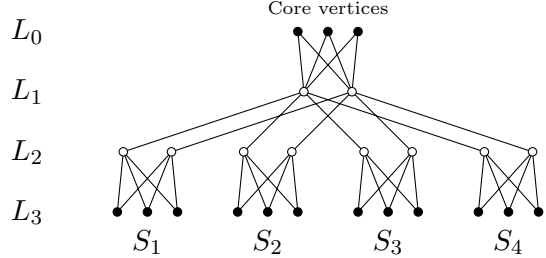

We will embed $S_1$ into $C_1\subseteq L_3$ in a nice manner that we can find $A_1\subseteq L_1$ and $B_1\subseteq L_2$ such that 
$A_1$ is a set of candidates for the images of centre vertices and $B_1$ is a set of candidates for the images of neighbours of $S_1$.
By repeatedly embedding $S_1,\dots, S_i$ into $C_1,\dots, C_i\subseteq L_3$ in an injective manner, we will find candidate sets $A_1\supseteq \dots \supseteq A_i$ for the images of the centre vertices, and the pairwise disjoint candidate sets $B_1, \dots, B_i \subseteq L_2$ for the neighbours of $S_1,\dots, S_i$. After embedding all $t-1$ clusters, if $|A_{t-1}|\geq \ell$ then this will give us a copy of $D_{t-1,s-1}^{\ell}$. \newline

\noindent {\bf Stage 1.}
We first choose a set $L_0$ of $s-1$ vertices which will be the images of the $s-1$ core vertices of $D_{t-1,s-1}^{\ell}$, and a set $L_1$ of vertices which are candidates for the images of the centre vertices of $D_{t-1,s-1}^{\ell}$. As $\delta(G)\geq 4q d$, we have
$$\sum_{L_0\in \binom{V}{s-1}} |N_{G}(L_0)| = \sum_{v\in V} \binom{d_{G}(v)}{s-1} \geq n \cdot \binom{4q d}{s-1}.$$
Hence, by averaging, there exists a vertex set $L_0 \in \binom{V}{s-1}$ with $d_{G}(L_0)\geq \binom{n}{s-1}^{-1} n\cdot \binom{4q d}{s-1} \geq \frac{q d^{s-1}}{n^{s-2}}\stackrel{\eqref{eq:dn}}{\geq} q  n^{\frac{t-1}{st-1}} $.
As $q\gg \ell$, we can choose a set $L_1 \subseteq N_{G}(L_0)$ with $|L_1| = \ell  n^{\frac{t-1}{st-1}}$.
\begin{claim}\label{cl: Gamma}
There exists a collection $\{\Gamma_u \subseteq N_{G}(u)\setminus (L_0\cup L_1) : u\in L_1\}$ of pairwise disjoint vertex sets with $|\Gamma_u| = 2d$.
\end{claim}
\begin{proof}
Note that $N_{G}(u)\cap L_1 =\emptyset$ for each $u\in L_1$  as $G$ is bipartite. We first show that $L_1$ expands: for each $A\subseteq L_1$, we have 
\begin{align}\label{eq:Hall cond}
|\Gamma_{G}(A)\setminus L_0| \geq 2d |A|.
\end{align} 
Suppose that $A\subseteq L_1$ satisfies $|B| < 2d|A|$ where
$B:=\Gamma_{G}(A)\setminus L_0$. 
Let $H$ be a bipartite graph with vertex partition $(A,B)$.
As $q\gg \ell,s,t\ge 2$, for any $v\in A$, we have
$$d_H(v,B)\ge \delta(G) -|L_0| \ge 4q n^{ \frac{(s-1)t-1}{st-1} } -s+1 \geq q n^{\frac{(t-1)(s-1)}{st-1} } \geq  \frac{2|L_1|^{s-1} \ell t}{s!} \geq \frac{2|A|^{s-1} \ell t}{s!}.$$
Hence, we can apply Lemma~\ref{lem:drc} to the bipartite graph $H$ with $4qd-s+1$ and $\ell t$ playing the roles of $d$ and $t$, respectively to obtain $A'\subseteq A$ with 
$$|A'| \geq \frac{|A|(4qd-s+1)}{2|B|} \geq \frac{|A|(4qd-s+1)}{ 2\cdot 2d|A|} \geq \ell + st,$$
such that any $S\in \binom{A'}{s}$ satisfies $d_{G}(S)\geq \ell t$.
We can then apply Lemma~\ref{lem:embed} to $G$ with $A', L_0$ and $s-1$ playing the role of $A, W$ and $s$, respectively to show that $G$ contains $D_{t-1,s-1}^{\ell}$ as a subgraph, a contradiction.
Hence, \eqref{eq:Hall cond} holds. Thus Lemma~\ref{lem:star} implies the existence of desired collection. This proves the claim.
\end{proof}

Let $L_2 := \bigcup_{u \in L_1}\Gamma_u$ and $L_3:= \Gamma_{G}(L_2)\setminus L_1$. As $G$ is bipartite, $L_0, L_1, L_2, L_3$ are pairwise disjoint vertex sets and the vertices in $L_2$ has no edges to $L_0$. Note that since $| L_1| =\ell n^{\frac{t-1}{st-1}}\leq qd$, each vertex $v\in L_2$ satisfies 
\begin{align}\label{eq: mindeg L3}
d_{G}(v, L_3) \geq 4qd- (qd+s-1) \geq 2qd.
\end{align}

\noindent {\bf Stage 2.}
Let $S_1,\dots, S_{t-1}$ be the sets of non-core clusters of $D_{t-1,s-1}^{\ell}$. We will embed these sets into sets $C_1,\dots, C_{t-1}$ in $L_3$. 
The following claim is useful for choosing the set $C_i$ so that we obtain candidate sets $A_i$ and $B_i$ of the correct sizes once we embedded $S_i$ into $C_i$.

\begin{claim}\label{claim:repetition}
Let $A^{\#}\subseteq L_1$, $B^*\subseteq L_2$ and $C^*\subseteq L_3$.
Suppose that $|C^*|\leq (s-1)(t-1)$ and for each $u\in A^{\#}$, we have $|\Gamma_u\cap B^*| \leq t-1$. Then there exist sets $A\subseteq A^{\#}$,
$B\subseteq L_2\setminus B^*$, $C \subseteq L_3\setminus C^*$ and a bijective function $f:A\rightarrow B$ satisfying the following.
\begin{itemize}
\item[$(\rm a)$] $|A|=|B| \geq n^{-\frac{1}{st-1}} |A^{\#}|$  and $|C|=s-1$.
\item[$(\rm b)$] $B\subseteq N_{G}(C)$.
\item[$(\rm c)$] $f(a) \in \Gamma_a$ for all $a\in A$.
\end{itemize}
\end{claim}
\begin{proof}
For each $u\in A^{\#}$, we consider the collection of $(s-1)$-tuples
$$\mathcal{C}_u:= \{ S\subseteq L_3\setminus C^* : |S|=s-1 \enspace \text{and} \enspace N_{G}(S)\cap (\Gamma_u\setminus B^*)\neq \emptyset \}.$$
We claim that for each $u\in A^{\#}$, we have 
\begin{align}\label{eq:mathCu}
|\mathcal{C}_u| \geq d^{s-1} |\Gamma_u\setminus B^*|.
\end{align}
Suppose $u\in A^{\#}$ and $|\mathcal{C}_u| <d^{s-1} |\Gamma_u\setminus B^*|$.
Let $X:= \Gamma_u \setminus B^*$.
Let $H_u$ be an auxiliary bipartite graph with a vertex partition $(X, \mathcal{C}_u)$ and 
$$E(H_u) =\{ wS \in X \times \mathcal{C}_u : w\in N_{G}(S)\}.$$

For each $w\in X$, by \eqref{eq: mindeg L3}, we have
$d_{G}(w, L_3\setminus C^*) = 2qd-|C^*|
\geq 2qd - st \geq qd.$
Since $|X|\le|\Gamma_u|\leq 2d$, we have
$$d_{H_u}(w) \geq \binom{qd}{s-1} \geq \frac{q d^{s-1}}{s^s}  
\geq \frac{2 |X|^{s-1} (\ell t)^{s}}{ s!}.$$ 
Hence, we can apply Lemma~\ref{lem:drc} to $H_u$ with $X$, $\mathcal{C}_u$ $\frac{ qd^{s-1}}{s^s}$ and $(\ell t)^s$ playing the roles of $A$, $B$, $d$ and $t$, respectively. Then we obtain $X' \subseteq \Gamma_{H_u}(S,X)\subseteq X$, where $S \in \mathcal{C}_u$ and 
$$|X'| \geq \frac{q d^{s-1}|X| }{ 2 s^s |\mathcal{C}_u|} \geq \frac{ q d^{s-1}|X|}{2 s^s d^{s-1} |\Gamma_u\setminus B^*|}
\geq (s-1)(t-1)+\ell$$ 
such that the following holds.
\begin{equation}
\begin{minipage}[c]{0.9\textwidth}\em
For any $U \in \binom{X'}{s}$, we have $d_{H_u}(U )\geq (\ell t)^s$. 
\end{minipage}
\end{equation}
Note that an $(s-1)$-set $S'\in N_{H_u}(U)$ if and only if all vertices $z\in S'$ are in $N_{G}(U)$.
Thus,
$$d_{H_u}(U) = \binom{|N_{G}(U, L_3\setminus C^* )|}{s-1}\geq (\ell t)^s,$$
implying that $d_{G}(U, L_3\setminus C^*)\geq \ell t$.
Hence, we can apply Lemma~\ref{lem:embed} to $G$ with $X', S, s-1$ and $t-1$ playing the roles of $A,W, s$ and $t$ to obtain a copy of $D_{t-1,s-1}^{\ell}$ in $G$,
a  contradiction. So \eqref{eq:mathCu} holds.

Now we aim to choose an appropriate $(s-1)$-set $C\subseteq L_3\setminus C^*$.
Let $H$ be an auxiliary bipartite graph with a vertex partition $(A^{\#} , \binom{L_3 \setminus C^*}{s-1})$ and 
$$ E(H):=\left\{ uS \in A^{\#}\times \binom{L_3 \setminus C^*}{s-1} : N_{G}(S)\cap (\Gamma_u\setminus B^*)\neq \emptyset \right\}.$$
In other words, $uS\in E(H)$ if $S\in \mathcal{C}_u$.
Claim~\ref{cl: Gamma} and \eqref{eq:mathCu} imply that
$$e(H) \geq \sum_{u\in A^{\#}} d^{s-1} |\Gamma_u\setminus B^*|
\geq  |A^{\#}|\cdot d^{s-1}\cdot (2d-t) \geq |A^{\#}|\cdot d^{s}.$$
Hence, by average, there exists a set $C\in \binom{|L_3 \setminus C^*|}{s-1}$ with 
$$d_H(C)\geq \binom{n}{s-1}^{-1}  e(H)\geq\binom{n}{s-1}^{-1}
|A^{\#}| d^{s} \geq \frac{d^{s}}{n^{s-1}} |A^{\#}|  \stackrel{\eqref{eq:dn}}{=} n^{-\frac{1}{st-1}} |A^{\#}|.$$

Let $A:= N_{H}(C)$. By the definition of $H$, 
  for each $a\in A$, there exists a vertex $f(a)\in \Gamma_u\setminus B^*$ such that $f(a) \in N_{G}(C)$. Let $B:=f(A)$. As $\Gamma_u\cap \Gamma_{u'}=\emptyset$ for distinct $u, u'\in A$, such a function $f$ is bijective between $A$ and $B$. It is easy to see that $A,B,C$ and $f$ satisfy properties (a)--(c).
\end{proof}

\begin{figure}[h]
\centering
\begin{tikzpicture}[scale=0.8]

\node (r1) at (0.5, 4) {};
\node (r2) at (1, 4) {};
\node (r3) at (1.5, 4) {};

\node (a11) at (-0.5, 2) {};
\node (a12) at (2.5, 2) {};

\node (a21) at (0.75 , 2) {};
\node (a22) at (2.25 , 2) {};

\node (b11) at (-2.5 , 0) {};
\node (b12) at (0.5 , 0) {};

\node (b21) at (2.25 , 0) {};
\node (b22) at (3.75 , 0) {};

\node (s11) at (-1.5, -2) {};
\node (s12) at (-1, -2) {};
\node (s13) at (-0.5, -2) {};

\node (s21) at (2.5, -2) {};
\node (s22) at (3, -2) {};
\node (s23) at (3.5, -2) {};

\fill[fill=gray!20] (r1.center)--(a11.center)--(a12.center);
\fill[fill=gray!30] (r2.center)--(a11.center)--(a12.center);
\fill[fill=gray!40] (r3.center)--(a11.center)--(a12.center);

\fill[fill=gray!20] (a11.center)--(a12.center)--(b12.center)--(b11.center);

\fill[fill=gray!20] (s11.center)--(b11.center)--(b12.center);
\fill[fill=gray!30] (s12.center)--(b11.center)--(b12.center);
\fill[fill=gray!40] (s13.center)--(b11.center)--(b12.center);

\begin{scope}
\clip
  (-1,1.5) rectangle (3,2.5);
  \draw[fill=gray!40] (1,2) ellipse [x radius=1.5,y radius=0.4];
\end{scope}

\begin{scope}
\clip
  (-2.5,-0.5) rectangle (0.5,0.5);
  \draw[fill=gray!40] (-1,0) ellipse [x radius=1.5,y radius=0.4];
\end{scope}

\fill[fill=gray!30] (a21.center)--(a22.center)--(b22.center)--(b21.center);

\fill[fill=gray!20] (s21.center)--(b21.center)--(b22.center);
\fill[fill=gray!30] (s22.center)--(b21.center)--(b22.center);
\fill[fill=gray!40] (s23.center)--(b21.center)--(b22.center);

\begin{scope}
\clip
  (0.75,1.5) rectangle (2.25,2.5);
  \draw[fill=gray!65] (1.5,2) ellipse [x radius=0.75,y radius=0.3];
\end{scope}

\begin{scope}
\clip
  (2.25,-0.5) rectangle (3.75,0.5);
  \draw[fill=gray!65] (3,0) ellipse [x radius=0.75,y radius=0.3];
\end{scope}

\filldraw[fill=black] (0.5, 4) circle (2pt);
\filldraw[fill=black] (1, 4) circle (2pt);
\filldraw[fill=black] (1.5, 4) circle (2pt);

\filldraw[fill=black] (-1.5, -2) circle (2pt);
\filldraw[fill=black] (-1, -2) circle (2pt);
\filldraw[fill=black] (-0.5, -2) circle (2pt);

\filldraw[fill=black] (2.5, -2) circle (2pt);
\filldraw[fill=black] (3, -2) circle (2pt);
\filldraw[fill=black] (3.5, -2) circle (2pt);

\node at (0.25,2) {\tiny $A_1$};
\node at (1.5,2) {\tiny $A_2$};
\node at (-1,0) {\tiny $B_1$};
\node at (3,0) {\tiny $B_2$};

\node at (1,4.4) {\tiny Core vertices};
\node at (-4,4) {$L_0$};
\node at (-4,2) {$L_1$};
\node at (-4,0) {$L_2$};
\node at (-4,-2) {$L_3$};
\node at (-1,-2.45) {\tiny $C_1$};
\node at (3,-2.45) {\tiny $C_2$};
\end{tikzpicture}
\caption{Embedding process of $D_{2,3}^\ell$ using Claim~\ref{claim:repetition}}
\end{figure}
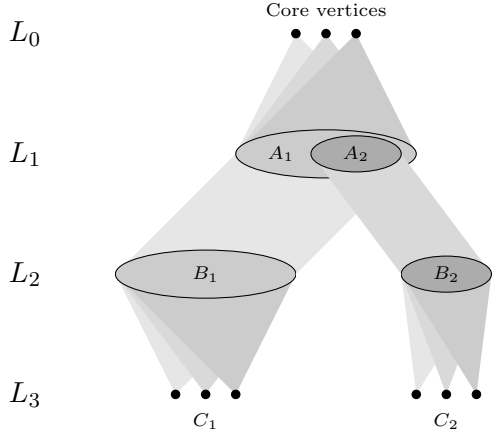

Let $A_0:= L_1$. 
For each $i= 1,2,\dots, t-1$ in order, we apply Claim~\ref{claim:repetition} with $A_{i-1}, \bigcup_{j=1}^{i-1}B_j$ and $\bigcup_{j=1}^{i-1}C_j$ playing the roles of $A^{\#}, B^*$ and $C^*$, respectively to obtain sets $A_i, B_i, C_i$ and $f_i$. This repetition is possible as 
the properties (a) and (c) ensures that $\bigcup_{j=1}^{i-1}B_j = \bigcup_{j=1}^{i-1} f_j(A_j)$ contains at most $i-1$ vertices in $\Gamma_u$ for each $u\in A_{i-1}$, as well as $|\bigcup_{j=1}^{i-1}C_j|\leq  (t-1)(s-1)$ holds.

Then we obtain sets $A_1\supseteq \dots \supseteq A_{t-1}$ and pairwise disjoint sets $B_1, \dots, B_{t-1}, C_{1},\dots, C_{t-1}$ and bijective functions $f_1,\dots, f_{t-1}$ with $f_i: A_i\rightarrow B_i$. Furthermore, for all $i\in [t-1]$ and $a\in A_i$, we have $|A_i| \geq n^{-\frac{i}{st-1}} |A_0| \geq \ell n^{\frac{t-1-i}{st-1}}$ and
$f_i(a)\in B_i\subseteq N_{G}(C_i)$. Moreover, as $A_1\supseteq \dots \supseteq A_{t-1}$, for each $i\in [t-1]$, the function $f_i$ is defined on each of the sets $A_{i+1},\dots, A_{t-1}$.

As $|A_{t-1}|\geq \ell$, we can choose a set $A$ of $\ell$ vertices in $A_{t-1}$.  Note that for each $i\in [t-1]$, the bipartite graph $G[A,f_i(A)]$ contains a perfect matching as we have $f_i(a) \in \Gamma_a$, and $G[C_i, f_i(A)]$ induces a complete bipartite graph $K_{s-1,\ell}$ as $f_i(A)\subseteq B_i \subseteq N_{G}(C_i)$. Since the sets $f_1(A)\subseteq B_{1},\dots, f_{t-1}(A)\subseteq  B_{t-1}$ are pairwise disjoint, the sets $L_0, A, f_1(A), \dots, f_{t-1}(A), C_1,\dots, C_{t-1}$ form a copy of $D_{t-1,s-1}^{\ell}$. More precisely, we can embed a copy of $D_{t-1,s-1}^{\ell}$ in such a way that the core vertices embed into $L_0$, centre vertices embed into $A$ and non-core root vertices embed into $C_1,\dots, C_{t-1}$. This proves the Lemma.
\end{proof}

\section{Reduction theorems}
\label{sec:reduc}
In this section, we will prove that in a certain class of bipartite graphs, the extremal number of a graph can be deduced from the extremal number of another simpler graph.

\subsection{Densification}
For $t\in \mathbb{N}$ and a connected bipartite graph $F$, let $(A,B)$ be its unique bipartition. We consider two disjoint set $R'_1$ and $R'_2$ of $t$ vertices disjoint from $V(F)$; and make the vertices of $R'_2$ adjacent to all vertices in $A$ and the vertices in $R'_1$ adjacent to all vertices in $B$; and add all possible edges between $R'_1$ and $R'_2$.
Let $F(t)$ denote the resulting graph.
If $F$ is a connected bipartite graph rooted on $R$, then we consider $F(t)$ as rooted on $R\cup R'_1\cup R'_2$ and let $F_*(t)$ denote the rooted graph we obtain from $F(t)$ by deleting all edges inside $R\cup R'_1\cup R'_2$, see Figure~\ref{fig-reduct}.

\begin{figure}[h]
\centering
\begin{tikzpicture}[scale=0.8]

\draw
(4,2)--(1,0)
(4,2)--(2,0)
(4,2)--(3,0)
(4,2)--(4,0)

(12,2)--(7,0)
(12,2)--(8,0)
(12,2)--(9,0)
(12,2)--(10,0)
(12,2)--(11,0)
(12,2)--(12,0)
(7,2)--(7,0)
(7,2)--(8,0)
(7,2)--(9,0)
(7,2)--(10,0)
(8,2)--(7,0)
(8,2)--(8,0)
(8,2)--(9,0)
(8,2)--(10,0)
(11,0)--(7,2)
(11,0)--(8,2)
(12,0)--(7,2)
(12,0)--(8,2)

(20,2)--(15,0)
(20,2)--(16,0)
(20,2)--(17,0)
(20,2)--(18,0)
(20,2)--(19,0)
(20,2)--(20,0)
(15,2)--(15,0)
(15,2)--(16,0)
(16,2)--(15,0)
(16,2)--(16,0)
;

\filldraw[fill=white] (4, 2) circle (2pt);
\filldraw[fill=white] (1, 0) circle (2pt);
\filldraw[fill=white] (2, 0) circle (2pt);
\filldraw[fill=black] (3, 0) circle (2pt);
\filldraw[fill=black] (4, 0) circle (2pt);

\filldraw[fill=white] (12, 2) circle (2pt);
\filldraw[fill=white] (7, 0) circle (2pt);
\filldraw[fill=white] (8, 0) circle (2pt);
\filldraw[fill=black] (9, 0) circle (2pt);
\filldraw[fill=black] (10, 0) circle (2pt);
\filldraw[fill=black] (11, 0) circle (2pt);
\filldraw[fill=black] (12, 0) circle (2pt);
\filldraw[fill=black] (7, 2) circle (2pt);
\filldraw[fill=black] (8, 2) circle (2pt);

\filldraw[fill=white] (20, 2) circle (2pt);
\filldraw[fill=white] (15, 0) circle (2pt);
\filldraw[fill=white] (16, 0) circle (2pt);
\filldraw[fill=black] (17, 0) circle (2pt);
\filldraw[fill=black] (18, 0) circle (2pt);
\filldraw[fill=black] (19, 0) circle (2pt);
\filldraw[fill=black] (20, 0) circle (2pt);
\filldraw[fill=black] (15, 2) circle (2pt);
\filldraw[fill=black] (16, 2) circle (2pt);

\node at (3,-0.5) {$w_1$};
\node at (4,-0.5) {$w_2$};
\node at (2.5,-1.2) {$F$};

\node at (9,-0.5) {$w_1$};
\node at (10,-0.5) {$w_2$};
\node at (11,-0.5) {$w_3$};
\node at (12,-0.5) {$w_4$};
\node at (7,2.5) {$w_5$};
\node at (8,2.5) {$w_6$};
\node at (9.5,-1.2) {$F(2)$};

\node at (17,-0.5) {$w_1$};
\node at (18,-0.5) {$w_2$};
\node at (19,-0.5) {$w_3$};
\node at (20,-0.5) {$w_4$};
\node at (15,2.5) {$w_5$};
\node at (16,2.5) {$w_6$};
\node at (17.5,-1.2) {$F_*(2)$};
\end{tikzpicture}
\caption{A connected bipartite graph $F$ with root set $R:=\{w_1,w_2\}$; and connected bipartite graphs $F(2), F_{*}(2)$ with root sets $R\cup R_1'\cup R_2'$, where $R_1':=\{w_3,w_4\}$ and $R_2':=\{w_5,w_6\}$.}\label{fig-reduct}
\end{figure}
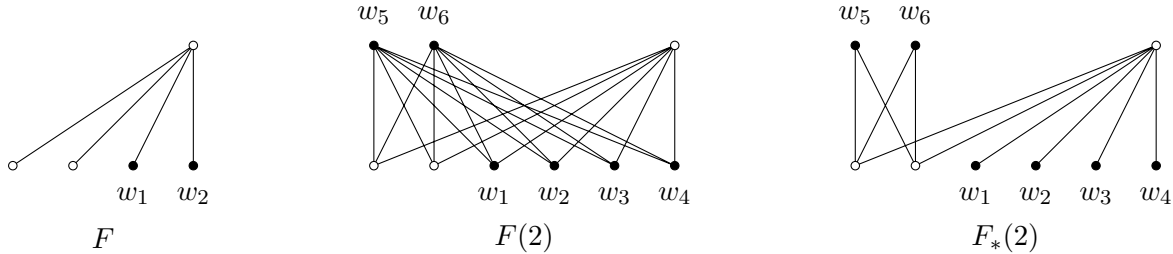

The following reduction theorem by Erd\H{o}s and Simonovits relates the extremal number of bipartite graphs $F$ and $F(t)$.

\begin{theorem}[Erd\H{o}s and Simonovits~\cite{erdos1970}]\label{thm:es_reduction}
Let $t\in \mathbb{N}$ and $F$ be a connected bipartite graph with $\ex(n , F) = O(n^{2-\alpha})$. Then $\ex(n , F(t)) = O(n^{2 - \beta})$ where $\beta^{-1} = \alpha^{-1} + t$.
\end{theorem}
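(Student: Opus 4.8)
The plan is to prove the contrapositive-style counting statement: any $n$-vertex graph $G$ with sufficiently many edges, say $e(G) \ge C n^{2-\beta}$ with $\beta^{-1} = \alpha^{-1}+t$, must contain a copy of $F(t)$. The strategy is to first locate a set $U$ of $t$ vertices with a large common neighbourhood, then find a second set $U'$ of $t$ vertices with a large common neighbourhood \emph{within} $N_G(U)$ (and also complete to $U$), and finally use the extremal-number hypothesis on $F$ to embed a copy of $F$ with its bipartition classes $A,B$ landing inside $N_G(U)\cap N_G(U')$ in the appropriate way. Since $F$ is connected with a unique bipartition $(A,B)$, once $A$ maps into the common neighbourhood of one $t$-set and $B$ into the common neighbourhood of the other, together with all the $K_{t,t}$-type edges among the root sets, the result is exactly a copy of $F(t)$.

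The first step is a standard dependent-random-choice / convexity argument: from $e(G) \ge C n^{2-\beta}$ one extracts, by averaging over $t$-subsets of a random vertex's neighbourhood (or directly via $\sum_v \binom{d(v)}{t} \ge n\binom{2e(G)/n}{t}$), a $t$-set $R_1'$ with $d_G(R_1') = |N_G(R_1')| \ge c n^{1-t\beta} = c n^{1-t\beta}$; the key arithmetic is that $1-t\beta$ is chosen precisely so that the graph $G' := G[N_G(R_1')]$, after also restricting attention to vertices with many neighbours back into a large set, still has "density" corresponding to exponent $2-\alpha$ on its own vertex set. The second step repeats this inside a suitable bipartite-like structure to pull out $R_2'$ of size $t$ whose common neighbourhood meets $N_G(R_1')$ in a set $W$ of size $\gg |N_G(R_1')|^{\,\text{something}}$, with the bookkeeping arranged so that $|W|$ is large enough—larger than the implicit constant in $\ex(|W|,F)=O(|W|^{2-\alpha})$ would allow—forcing a copy of $F$ inside $G[W]$ with the correct sides. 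The precise exponent computation is where $\beta^{-1}=\alpha^{-1}+t$ comes from: we lose a factor of roughly $n^{t\beta}$ in each of the two "conditioning on a common neighbourhood" steps, shrinking the effective ground set from $n$ to $n^{\alpha/\beta\cdot(\dots)}$, and one checks that a graph on $m := n^{\,\alpha\beta^{-1}}$-ish vertices with $\gg m^{2-\alpha}$ edges is guaranteed inside.

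I would carry this out as: (1) fix parameters and reduce $G$ to a bipartite subgraph $G_0$ with halved minimum degree, as is done in the proof of Lemma~\ref{lem:mult_main}; (2) extract $R_1'$ with $|N_G(R_1')|$ large by convexity, controlling also that most vertices of $N_G(R_1')$ retain large degree into $V\setminus N_G(R_1')$; (3) inside the bipartite graph between $N_G(R_1')$ and its outside-neighbourhood, extract $R_2'$ of size $t$ with $|N_G(R_2')\cap N_G(R_1')|$ still a positive power of $n$, and disjoint from $R_1'$; (4) verify the exponent accounting yields a set $W$ with $|W| \ge (\text{density})^{1/\alpha}$ so that $e(G[W]) > \ex(|W|,F)$; (5) invoke the hypothesis to get $F\subseteq G[W]$ with classes $A\subseteq N_G(R_1')$, $B\subseteq N_G(R_2')\cap N_G(R_1')$ — here one must be slightly careful that $F$'s two sides go to the correct common neighbourhoods, which works because $N_G(R_1')\supseteq N_G(R_2')\cap N_G(R_1')$ and $F$ is connected so its bipartition is rigid; (6) assemble $R:=$ roots of $F$, together with $R_1'$ adjacent to all of $B$, $R_2'$ adjacent to all of $A$, and all $R_1'$–$R_2'$ edges, to exhibit $F(t)$.

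The main obstacle I anticipate is the exponent bookkeeping in steps (3)–(4): one has to choose the thresholds for "large degree" carefully so that the losses compound correctly to give exactly $\beta^{-1}=\alpha^{-1}+t$ rather than something weaker, and one must ensure at each conditioning step that enough vertices survive with enough degree into the \emph{next} layer (as opposed to just globally), so that the final set $W$ is genuinely dense and not merely large. A secondary technical point is guaranteeing $R_1', R_2'$ and the image of $F$ are pairwise disjoint and that the $R_1'$–$R_2'$ complete bipartite graph is present; the disjointness is handled by always working in external neighbourhoods (as $G_0$ is bipartite, or by deleting a bounded number of bad vertices), and the $R_1'$–$R_2'$ edges can be secured by choosing $R_2'$ inside $N_G(R_1')$ from the start, or—if $G$ is bipartite—by a minor adjustment adding those edges for free in the target graph $F_*(t)$ version and observing $F(t)\subseteq$ the host once densified. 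None of the individual estimates is hard; the care is entirely in the choice of constants and the order of quantifiers.
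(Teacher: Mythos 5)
The paper does not actually supply a proof of Theorem~\ref{thm:es_reduction}; it is cited verbatim from Erd\H{o}s and Simonovits~\cite{erdos1970}. So your sketch is judged only against what a correct proof of this classical result must look like, and there is a genuine gap at the heart of it. You first reduce $G$ to a bipartite graph with parts $P,Q$, then seek $R_1',R_2'$ inducing a $K_{t,t}$ (so necessarily $R_1'\subseteq P$, $R_2'\subseteq Q$), and then try to embed $F$ inside $W:=N_G(R_1')\cap N_G(R_2')$ (your introduction and your step (5), where $B\subseteq N_G(R_2')\cap N_G(R_1')$). But in a bipartite host with $R_1'\subseteq P$ and $R_2'\subseteq Q$ we have $N_G(R_1')\subseteq Q$ and $N_G(R_2')\subseteq P$, so $N_G(R_1')\cap N_G(R_2')=\emptyset$: no vertex can be in both common neighbourhoods. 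Even if you dropped the bipartite reduction, requiring both classes of $F$ to lie in the common neighbourhood of $R_1'\cup R_2'$ (a $2t$-set) is wasteful and the exponent bookkeeping then gives at best $\beta^{-1}=\alpha^{-1}+2t$, not $\alpha^{-1}+t$: one loses a factor $n^{2t\beta}$ in the ground-set size rather than $n^{t\beta}$.

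The fix is that $F(t)$ only requires $A\subseteq N_G(R_2')$ and $B\subseteq N_G(R_1')$ \emph{separately}, not both classes in the intersection. Concretely one should locate $F$ inside the \emph{bipartite} graph $G[N_G(R_2')\cap P,\;N_G(R_1')\cap Q]$, mapping $A$ into $N_G(R_2')\cap P$ and $B$ into $N_G(R_1')\cap Q$; the $K_{t,t}$ between $R_1'$ and $R_2'$ is obtained, as you suggest, by choosing $R_2'\subseteq N_G(R_1')$. The relevant counting is then of the form $\sum_{T_1\subseteq P,\,T_2\subseteq Q} e\bigl(G[N(T_2)\cap P,\,N(T_1)\cap Q]\bigr)=\sum_{pq\in E}\binom{d(q,P)}{t}\binom{d(p,Q)}{t}$, together with a simultaneous control of $|N(T_1)|$ and $|N(T_2)|$ (it is not enough to take a plain average, since the trivial bound $|N(T_i)|\le n$ is too weak to beat $\ex(\cdot,F)$), and this is exactly where $\beta^{-1}=\alpha^{-1}+t$ emerges. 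Your observation about the ``$F_*(t)$ version'' does not rescue the argument either: $F_*(t)$ is a \emph{subgraph} of $F(t)$, so embedding $F_*(t)$ is a weaker statement and does not yield the required upper bound on $\ex(n,F(t))$. A secondary issue you already flag yourself is that finding $|W|$ large is not the same as finding $G[W]$ (or the relevant bipartite piece) dense; the averaging must be done on edge counts, not just common degrees.
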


Another important tools we use is Lemma~\ref{thm:lowerbound} by Bukh and Conlon. 
To be able to use Theorem~\ref{thm:es_reduction} and Lemma~\ref{thm:lowerbound} in the same framework, we need the following proposition.

\begin{proposition}\label{prop:dense_balanced}
Let $t\in \mathbb{N}$ and $F$ be a balanced rooted bipartite graph. Then both $F(t)$ and $F_*(t)$ are balanced rooted bipartite graph. 
\end{proposition}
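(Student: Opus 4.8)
The plan is to verify the balancedness condition directly from the definition, handling $F_*(t)$ first (since it has no edges among roots, this is the cleaner case) and then deducing the statement for $F(t)$ by the same subset-density bookkeeping that was used after Lemma~\ref{thm:lowerbound}. Write $(A,B)$ for the bipartition of $F$, let $R$ be the original root set, and let $R_1', R_2'$ be the two new $t$-sets of roots. The non-root vertex set of both $F(t)$ and $F_*(t)$ is exactly $V(F)\setminus R$, so the quantity $\rho(\cdot)$ is computed over the same ground set; what changes is that every non-root vertex has picked up extra edges to $R_1'\cup R_2'$. Concretely, in $F_*(t)$ each non-root vertex of $F$ lying in $A$ gains $t$ new edges (to $R_2'$) and each one lying in $B$ gains $t$ new edges (to $R_1'$), so \emph{every} non-root vertex gains exactly $t$ incident edges. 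The key point is therefore the following elementary observation: if $F'$ is obtained from $F$ by adding, for some constant $c\ge 0$, exactly $c$ new edges at each non-root vertex (all new edges going to newly added root vertices), then for any nonempty $S\subseteq V(F)\setminus R$ one has $e^{F'}_S = e^{F}_S + c|S|$, hence $\rho_{F'}(S) = \rho_F(S) + c$. Since adding the same constant $c$ to every $\rho_F(S)$ preserves the property ``$\rho_F(S)\ge \rho(F)$ for all nonempty $S$'', balancedness of $F$ immediately gives balancedness of $F'$; applying this with $F' = F_*(t)$ and $c=t$ settles $F_*(t)$.

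For $F(t)$ the only difference from $F_*(t)$ is the presence of edges inside the root set $R\cup R_1'\cup R_2'$ (the edges among $R$ that were already in $F$, plus all edges between $R_1'$ and $R_2'$). But in the definition of $\rho_F(S)$ for $S\subseteq V(F)\setminus R$ one only counts edges \emph{incident with a vertex of $S$}, and no vertex of $S$ is a root; so edges lying entirely inside the root set contribute nothing to any $e_S$, and in particular $\rho_{F(t)}(S) = \rho_{F_*(t)}(S)$ for every nonempty $S\subseteq V(F)\setminus R$. Hence $F(t)$ is balanced if and only if $F_*(t)$ is, and we are done. (This is exactly the reduction noted in the paragraph following Lemma~\ref{thm:lowerbound}: deleting edges inside the root set changes neither the non-root vertex set nor the relevant densities.) One should also record that $F(t)$ and $F_*(t)$ are still bipartite — placing $R_1'$ on the $A$-side and $R_2'$ on the $B$-side of a two-colouring of $F$ works, since the only new edges go $A\leftrightarrow R_2'$, $B\leftrightarrow R_1'$, and $R_1'\leftrightarrow R_2'$ — and that $R\cup R_1'\cup R_2'$ is a proper subset of the vertex set (so $\rho(\cdot)$ is defined), which holds as long as $F$ has at least one non-root vertex; connectedness of $F$ is used implicitly here to know $(A,B)$ is the unique bipartition, matching the setup of the densification construction.

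The main obstacle, such as it is, is purely bookkeeping: one must be careful that the construction adds exactly $t$ edges at \emph{each} non-root vertex regardless of which side of $(A,B)$ it lies on — this is what makes the ``add a constant $c$ to every $\rho_F(S)$'' trick apply uniformly — and that the new root–root edges genuinely do not enter any $e_S$. There is no hard estimate; the content is entirely in isolating the right invariant ($\rho_{F'}(S) = \rho_F(S)+c$ for every non-root $S$) and observing it is monotone under the operation in question. I would present the one-line lemma about adding $c$ pendant-to-roots edges at each non-root vertex, verify it from $e_S^{F'}=e_S^F+c|S|$, and then spend two sentences checking bipartiteness and the ``edges inside roots are irrelevant'' remark to cover $F(t)$.
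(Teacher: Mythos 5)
Your proposal is correct and takes essentially the same route as the paper: the key identity $\rho_{F(t)}(S)=\rho_F(S)+t$ for every nonempty $S\subseteq V(F)\setminus R$, together with the observation that root–root edges never enter $e_S$, is exactly the paper's argument (the paper reduces $F_*(t)$ to $F(t)$; you reduce $F(t)$ to $F_*(t)$, but the content is identical). Your additional remarks on bipartiteness and well-definedness are routine checks the paper leaves implicit.
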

\begin{proof}
As the edges between roots do not affect the definition of balancedness, it suffices to prove it for $F(t)$.
As every non-root vertices are adjacent to $t$ more vertices in $F(t)$ than $F$, it is easy to check that for any non-empty set $S\subseteq V(F)\setminus R$ we have $\rho_{F(t)}(S) = t+ \rho_F(S)$.
Hence, for every non-empty set $S$ of non-root vertices of $F(t)$, we have
$\rho(F(t)) = \rho(F)+t \leq \rho_F(S)+t = \rho_{F(t)}(S)$
Hence, $F(t)$ and $F_*(t)$ are balanced.
\end{proof}

We say that a number $r \in [1,2)$ is \emph{balancedly realisable by a graph $F$} if there exist a balanced connected rooted bipartite graph $F$ and a positive integer $\ell_0$ satisfying $\rho(F)=\frac{1}{2-r}$ and for every $\ell\geq \ell_0$, we have $\ex(n,F^{\ell})=\Theta(n^{r})$. By combining Theorem~\ref{thm:es_reduction} and Lemma~\ref{thm:lowerbound}, we can prove the following lemma.

\begin{lemma}\label{lem: densify}
For $a,b\in \mathbb{N}$ with $b>a$, if $2- \frac{a}{b}$ is balancedly realisable, then $2- \frac{a}{a+b}$ is also balancedly realisable.
\end{lemma}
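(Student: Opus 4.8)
Suppose $2-\frac{a}{b}$ is balancedly realisable by a balanced connected rooted bipartite graph $F$ with $\rho(F) = \frac{1}{2-(2-a/b)} = \frac{b}{a}$, so that $\ex(n,F^\ell) = \Theta(n^{2-a/b})$ for all large $\ell$. The natural candidate for the new witnessing graph is the densification $F(1)$ (or $F_*(1)$), which by Proposition~\ref{prop:dense_balanced} is again a balanced connected rooted bipartite graph. The strategy is to pin down $\rho(F(1))$, deduce the correct target exponent, apply Theorem~\ref{thm:es_reduction} for the upper bound on $\ex(n, F(1))$, and apply Lemma~\ref{thm:lowerbound} to $F(1)^\ell$ for the matching lower bound. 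However, one must be careful: Theorem~\ref{thm:es_reduction} is stated for the graph $F(t)$ itself, not its blow-up $F(t)^\ell$, so the clean move is instead to take as the new witness the graph $G := F^\ell$ for a suitable fixed large $\ell$, and densify \emph{that}: set the new graph to be $G(1) = (F^\ell)(1)$, rooted appropriately.

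\textbf{Key steps.} First I would record the density computation. By the proof of Proposition~\ref{prop:dense_balanced}, for any non-empty set $S$ of non-root vertices, $\rho_{F(1)}(S) = 1 + \rho_F(S)$, hence $\rho(F(1)) = 1 + \rho(F) = 1 + \frac{b}{a} = \frac{a+b}{a}$. This gives the intended exponent $2 - \frac{1}{\rho(F(1))} = 2 - \frac{a}{a+b}$, which is exactly the number we want to realise. Second, for the upper bound: since $\ex(n, F^\ell) = \Theta(n^{2-a/b}) = O(n^{2-a/b})$ and $F^\ell$ is connected bipartite (it is connected because $F-E(R)$ is connected, being balanced with $\rho\ge 1$, and gluing copies at roots preserves connectedness), Theorem~\ref{thm:es_reduction} with $t=1$ and $\alpha = a/b$ gives $\ex(n, (F^\ell)(1)) = O(n^{2-\beta})$ with $\beta^{-1} = \alpha^{-1}+1 = \frac{b}{a}+1 = \frac{a+b}{a}$, i.e.\ $\ex(n,(F^\ell)(1)) = O(n^{2-\frac{a}{a+b}})$. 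Third, for the lower bound: $(F^\ell)(1)$ is a balanced connected rooted bipartite graph by Proposition~\ref{prop:dense_balanced} (applied to the balanced graph $F^\ell$, which is balanced by Observation~\ref{obs:easy}), with $\rho((F^\ell)(1)) = 1 + \rho(F^\ell) = 1 + \rho(F) = \frac{a+b}{a} > 0$, so Lemma~\ref{thm:lowerbound} gives an $\ell' = \ell'_0((F^\ell)(1))$ such that $\ex(n, ((F^\ell)(1))^{\ell'}) = \Omega(n^{2 - \frac{a}{a+b}})$ for all $\ell' > \ell'_0$.

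\textbf{Reconciling the two bounds.} The remaining point is that the upper bound is for $(F^\ell)(1)$ while the lower bound is for its blow-up $((F^\ell)(1))^{\ell'}$; these agree because $\ex(n, H') \le \ex(n, H)$ whenever $H \subseteq H'$, and each copy of $(F^\ell)(1)$ sits inside $((F^\ell)(1))^{\ell'}$ as a subgraph, so $\ex(n, ((F^\ell)(1))^{\ell'}) = O(n^{2-\frac{a}{a+b}})$ as well. Thus, setting $F' := (F^\ell)(1)$ rooted on $R \cup R_1' \cup R_2'$, we obtain that $F'$ is a balanced connected rooted bipartite graph with $\rho(F') = \frac{a+b}{a} = \frac{1}{2-(2-a/(a+b))}$ and $\ex(n, (F')^{\ell'}) = \Theta(n^{2 - \frac{a}{a+b}})$ for all $\ell' \ge \ell'_0(F')$, which is precisely the statement that $2 - \frac{a}{a+b}$ is balancedly realisable by $F'$.

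\textbf{Expected obstacle.} The only genuine subtlety is bookkeeping about \emph{which} graph Theorem~\ref{thm:es_reduction} applies to and matching it with the blow-up in Lemma~\ref{thm:lowerbound}; the monotonicity observation $\ex(n, H') \le \ex(n, H)$ for subgraphs $H \subseteq H'$ resolves it cleanly. A secondary point to verify carefully is that $F^\ell$ is connected (so that $F^\ell$ is a legal input to the Erd\H{o}s--Simonovits reduction), which follows from the "Moreover" clause of Observation~\ref{obs:easy} guaranteeing $F - E(R)$ is effectively connected through the roots once we know $\rho_F(S) \ge 1$; one should either assume $F-E(R)$ connected as part of "balancedly realisable by $F$" or argue it directly. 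Everything else is the routine density arithmetic already carried out in Proposition~\ref{prop:dense_balanced}.
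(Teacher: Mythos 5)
Your overall plan (densify, compute $\rho$, reduction theorem for the upper bound, Bukh--Conlon for the lower bound) is the right one, and the density arithmetic is correct. However, the \emph{Reconciling the two bounds} step contains a genuine error that breaks the proof: you assert that $\ex(n,H') \le \ex(n,H)$ whenever $H\subseteq H'$, but this monotonicity goes the other way. If $H\subseteq H'$, then every $H$-free graph is automatically $H'$-free (any copy of $H'$ contains a copy of $H$), so the family of $H$-free graphs is contained in the family of $H'$-free graphs and hence $\ex(n,H) \le \ex(n,H')$. In your setup $H=(F^\ell)(1) \subseteq ((F^\ell)(1))^{\ell'}=H'$, so $\ex(n,(F^\ell)(1)) \le \ex(n,((F^\ell)(1))^{\ell'})$; your $O(n^{2-\frac{a}{a+b}})$ upper bound on the fixed graph $(F^\ell)(1)$ therefore does \emph{not} propagate to its blow-up, and the upper bound you need for the claim of balanced realisability is left unproved.

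The paper avoids this by exploiting a commuting identity that you decided to sidestep: for a rooted bipartite graph $F$ (with root set inside the bipartition), one has $F(1)^{\ell} = F^{\ell}(1)$, because the two new root vertices in $F(1)$ are adjacent to entire sides of the bipartition and these sides simply blow up compatibly when passing to $F^{\ell}$. Thus the Erd\H{o}s--Simonovits reduction, applied to $F^{\ell}$ (which is a single connected bipartite graph), controls $\ex(n,F^{\ell}(1)) = \ex(n,F(1)^{\ell})$ directly, giving the needed upper bound on a \emph{blow-up}. There is then a second issue your sketch glosses over: Lemma~\ref{thm:lowerbound} needs the root set to be independent, and $F(1)$ has edges inside its root set (between $R_1'$ and $R_2'$, and from $R_1',R_2'$ to old roots). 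The paper therefore takes the witness to be $F_*(1)$ (edges inside roots deleted), which has the same $\rho$ and an independent root set, so Lemma~\ref{thm:lowerbound} gives the lower bound $\ex(n,F_*(1)^{\ell}) = \Omega(n^{2-\frac{a}{a+b}})$. Now the subgraph inequality, used in the \emph{correct} direction $F_*(1)^{\ell}\subseteq F(1)^{\ell}\Rightarrow \ex(n,F_*(1)^{\ell})\le \ex(n,F(1)^{\ell})$, transfers the upper bound to $F_*(1)^{\ell}$, closing the argument with $F_*(1)$ as the witnessing graph. If you want to keep your framing with $(F^\ell)(1)$ as the witness, you would still need the identity $\bigl((F^\ell)(1)\bigr)^{\ell'} = (F^{\ell\ell'})(1)$ to get the upper bound for the blow-ups, which is exactly the commuting you tried to avoid.
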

\begin{proof}
By the assumption, there exist a balanced connected rooted bipartite graph $F$ and $\ell_0$ such that $\ex(n,F^{\ell})= \Theta( n^{2-\frac{a}{b}})$ for any $\ell\ge \ell_0$ and $\rho(F)=\frac{b}{a}$.

By Proposition~\ref{prop:dense_balanced}, $F_*(1)$ is also a balanced connected rooted bipartite graph with $\rho(F_*(1))= \rho(F)+1= \frac{a+b}{a}$. Hence, Lemma~\ref{thm:lowerbound} implies that there exists $\ell'_0\in \mathbb{N}$ such that for all $\ell \geq \ell'_0$, we have 
$$\ex(n, F_*(1)^{\ell}) = \Omega( n^{2- \frac{1}{\rho(F_*(1))} }) = \Omega( n^{2-\frac{a}{a+b}}).$$
On the other hand, as $F(1)^{\ell}=F^{\ell}(1)$ and $(\frac{a}{a+b})^{-1} = (\frac{a}{b})^{-1}+1$,
Theorem~\ref{thm:es_reduction} with the definition of $\ell_0$ implies that for all $\ell\geq \ell_0$,
$$\ex(n, F(1)^{\ell}) = \ex(n, F^{\ell}(1)) = O(n^{2-\frac{a}{a+b}}).$$

As $F_*(1)$ is a subgraph of $F(1)$, we have $\ex(n,F_*(1)^{\ell})\leq \ex(n,F(1)^{\ell})$. Thus, for any $\ell\geq \max\{\ell_0, \ell'_0\}$, we have
$$\ex(n, F_*(1)^{\ell}) \leq \ex(n,F(1)^{\ell}) = \Theta(n^{2-\frac{a}{a+b}}).$$
Therefore, $2-\frac{a}{a+b}$ is balancedly realisable.
\end{proof}

Now we are ready to prove Theorem~\ref{thm:main}.
\begin{proof}[Proof of Theorem~\ref{thm:main}]
Let $a\in \mathbb{N}$, it is known that the number $2- \frac{a}{a+1}$ is realisable by any large Theta graphs~\cite{faudree1983, conlon2014}, which is a blow-up of path rooted on the two end points. Hence $2- \frac{a}{a+1}$ is balancedly realisable. This with Lemma~\ref{lem: densify} implies that $2- \frac{a}{b}$ is realisable if $b>a$ and $b\equiv 1 ~({\rm mod}\:a)$. By Theorem 3.1, it also follows that $2 - \frac{a}{b}$ is realisable if $b>a$ and $b\equiv -1 ~({\rm mod}\:a)$, completing the proof.
\end{proof}

\subsection{Subdivision conjecture}

To see the motivation behind Conjecture~\ref{conj:subdiv}, suppose that $\ex(n,F) = O(n^{1+\alpha})$. Suppose that we have an $n$-vertex bipartite graph $G$ having no ${\rm sub}(F)$ as a subgraph with $e(G) = C n^{1+\alpha/2}$.
Consider an auxiliary graph $G^*$ with $V(G^*) = V(G)$ and $uv \in E(G^*)$ if and only if there exists a path of length two between $u$ and $v$.
By using a dependent random choice to the bipartite graph $G[\Gamma_{G}(v), \Gamma_{G}^2(v)]$ for each $v\in V(G)$, it is easy to see that $G^*$ contains at least $\Omega(n^{1+\alpha})$ edges, hence contains a copy of $F$. 
Note that this copy of $F$ will correspond to a (possibly degenerate) copy of ${\rm sub}(F)$ in $G$. Indeed, as $G^*$ contains many cliques of size $\Omega(n^{\alpha/2})$, namely $N_{G}(w)$ for each $w\in V(G)$, there is no guarantee that  the copies of $F$ is non-degenerate. However, it is plausible that a non-degenerate copy of ${\rm sub}(F)$ exists if $C$ is sufficiently large.


Conjecture~\ref{conj:es} and Conjecture~\ref{conj:subdiv} seem unrelated. However, much to our surprise, Conjecture~\ref{conj:subdiv} implies Conjecture~\ref{conj:es}. The rest of this section is devoted to show how two conjectures are connected.

\begin{proposition}\label{prop:sub balanced}
Given a balanced bipartite graph $F$ rooted on an independent set $R$ with
$\rho(F)\geq 1$, the $1$-subdivision ${\rm sub}(F)$ is also balanced rooted bipartite graph.
\end{proposition}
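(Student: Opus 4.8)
The plan is to verify the balancedness condition directly by comparing density ratios $\rho_{{\rm sub}(F)}(S)$ for subsets $S$ of non-root vertices of ${\rm sub}(F)$ against $\rho({\rm sub}(F))$. First I would set up notation: write $V(F)\setminus R = \{v_1,\dots,v_p\}$ for the original non-root vertices, and for each edge $e\in E(F)$ let $x_e$ denote the new subdivision vertex. So $V({\rm sub}(F))\setminus R$ consists of the original non-root vertices together with all subdivision vertices $x_e$ for $e$ incident to at least one non-root vertex (the subdivision vertices on edges inside $R$ are roots-adjacent only, but since $R$ is independent there are no such edges — this is where the independence hypothesis is used). Counting: ${\rm sub}(F)$ restricted to non-root vertices has $p + e(F)$ vertices of the relevant type, and the number of edges incident to a non-root vertex of ${\rm sub}(F)$ is $2e(F)$ (each original edge becomes a path of length two, both of whose edges touch a subdivision vertex). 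Hence $\rho({\rm sub}(F)) = \frac{2e(F)}{p+e(F)} = \frac{2\rho(F)}{1+\rho(F)}$, using $\rho(F)=e(F)/p$.

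Next, given a non-empty $S\subseteq V({\rm sub}(F))\setminus R$, I would split $S$ into $S_0 := S\cap (V(F)\setminus R)$ (original non-root vertices) and $S_1 := S\setminus S_0$ (subdivision vertices in $S$). The key observation is monotonicity: adding a subdivision vertex $x_e$ to $S$ can only help if its two endpoints are already "paid for", so the worst case for the ratio should occur when $S$ is "closed" in a suitable sense — specifically, for a fixed $S_0$, the ratio $\rho_{{\rm sub}(F)}$ is minimized by taking $S_1$ to be exactly the set of subdivision vertices $x_e$ where both endpoints of $e$ lie in $S_0 \cup R$ (adding such an $x_e$ adds one vertex and zero new incident edges to the count $e_S$; adding any other $x_e$ adds one vertex and at least one new incident edge, which only increases the ratio toward $1$, and we will see the extremal ratio is at most... well, we need to be careful). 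So I would reduce to the case where $S$ consists of $S_0$ together with all subdivision vertices whose both endpoints are in $S_0\cup R$. For such $S$, letting $e_{S_0}$ denote the number of edges of $F$ incident with $S_0$ and $f_{S_0}$ the number of edges of $F$ with both endpoints in $S_0\cup R$, one computes $|S| = |S_0| + f_{S_0}$ and $e_{S}({\rm sub}(F)) = e_{S_0} + f_{S_0}$ (each of the $f_{S_0}$ "internal" subdivision vertices contributes $2$ incident edges but one of those is already counted among edges incident to $S_0$... I need to recount this carefully, but the upshot is a clean formula). Then balancedness of $F$ gives $e_{S_0}\geq \rho(F)|S_0|$ and also $f_{S_0}\leq e_{S_0}$, and the hypothesis $\rho(F)\geq 1$ enters to push the resulting inequality $\frac{e_{S_0}+f_{S_0}}{|S_0|+f_{S_0}} \geq \frac{2\rho(F)}{1+\rho(F)}$ through.

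The main obstacle I anticipate is the second reduction — showing that the minimum of $\rho_{{\rm sub}(F)}(S)$ over all $S$ with a fixed $S_0$ is attained at the "closure" choice of $S_1$, and then verifying the resulting single-variable inequality. Concretely, after reduction the inequality to check is of the form $\frac{e_{S_0}+f_{S_0}}{|S_0|+f_{S_0}}\geq \frac{2e(F)/p\cdot 1}{1+e(F)/p}$, and because $x\mapsto \frac{A+x}{B+x}$ is monotone in $x$ (decreasing when $A<B$, which holds here since $e_{S_0}\leq |S_0|$ would be false — actually $e_{S_0}\geq |S_0|$ by Observation~\ref{obs:easy} since $F-E(R)$ connected gives $\rho_F(S_0)\geq 1$), the worst case is $f_{S_0}$ as large as possible, i.e.\ $f_{S_0}=e_{S_0}$ when $S_0$ spans all its incident edges; substituting $f_{S_0}=e_{S_0}$ reduces everything to $\frac{2e_{S_0}}{|S_0|+e_{S_0}}\geq \frac{2e(F)}{p+e(F)}$, which after cross-multiplying is equivalent to $e_{S_0}\cdot p \geq e(F)\cdot|S_0|$, i.e.\ $\rho_F(S_0)\geq \rho(F)$ — precisely the balancedness of $F$. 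So the logical skeleton is clean; the care needed is purely in the bookkeeping of which subdivision vertices to include and in confirming the direction of monotonicity via $\rho(F)\geq 1$. I would write this up by first doing the vertex/edge counts for ${\rm sub}(F)$, then handling a general $S$ by the closure argument, then finishing with the one-line reduction to $\rho_F(S_0)\geq\rho(F)$.
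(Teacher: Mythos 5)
Your plan --- fix $S_0 := S\cap V(F)$, reduce to a worst-case ``closure'' choice of subdivision vertices, then optimise a single ratio --- is a genuinely different route from the paper's, which handles an arbitrary $S$ in one shot by partitioning the subdivision vertices of $S$ into three classes (having $0$, $1$, or $2$ neighbours in $S_0$) and manipulating the whole expression at once. But as written your argument has a gap: the closure is misidentified, and the reduction to it is neither proved nor in fact true. The worst-case set of subdivision vertices for fixed $S_0$ is not $\{x_e : \text{both endpoints of } e \text{ lie in } S_0\cup R\}$ but the larger set $\{x_e : e \text{ is incident to } S_0\}$. Two slips cause this: adding $x_e$ with one endpoint in $S_0$ and the other in $R$ adds \emph{one} new incident edge, not zero; and your closure discards $x_e$ whenever the other endpoint of $e$ is a non-root vertex outside $S_0$, even though such an $x_e$ also adds exactly one edge, and including it pushes the ratio further down because the ratio is already above $1$. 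For a concrete counterexample to the claimed reduction, take $F = T_{2,3}$ (the path $r_1 v_1 v_2 r_2$ rooted on its endpoints) and $S_0 = \{v_1\}$: your closure is $\{v_1, x_{r_1 v_1}\}$ with ratio $3/2$, while $\{v_1, x_{r_1 v_1}, x_{v_1 v_2}\}$ has ratio $4/3 < 3/2$, so $\rho_{{\rm sub}(F)}(S) \geq \rho_{{\rm sub}(F)}(\text{your closure})$ fails.

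The silver lining is that your terminal calculation $\frac{2e_{S_0}}{|S_0|+e_{S_0}} \geq \frac{2e(F)}{p+e(F)} \Longleftrightarrow \rho_F(S_0) \geq \rho(F)$ is exactly right, and $\frac{2e_{S_0}}{|S_0|+e_{S_0}}$ is precisely the ratio of the \emph{correct} closure $\tilde S := S_0 \cup \{x_e : e \text{ incident to } S_0\}$. What remains, and where the real work lies, is proving the reduction $\rho_{{\rm sub}(F)}(S) \geq \rho_{{\rm sub}(F)}(\tilde S)$ for an arbitrary $S$ with $S\cap V(F) = S_0$; carrying that out requires observing that the subdivision vertices of $S$ adjacent to some vertex of $S_0$ number at most $e_{S_0}$, and that each subdivision vertex with no neighbour in $S_0$ contributes two edges, which beats $\rho({\rm sub}(F))\le 2$ --- in other words, essentially the same bookkeeping the paper carries out directly. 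So the closure framing is a legitimate reorganisation, but it does not bypass the paper's computation.
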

\begin{proof}
Let $R$ be the set of root vertices of $F$.
Let $b:= |E(F)|$ and $a:= |V(F)\setminus R|$. As $R$ is an independent set, we have $b\geq a$ as $\rho(F)= \frac{b}{a}\geq 1$.
For $S\subseteq V(F)$, let $e_S$ be the number of edges incident with a vertex in $S$ in the graph $F$ and let $e(S)$ be the number of edges whose both endpoints lie in $S$ in the graph $F$.

As $F$ is balanced, for each non-empty set $S\subseteq V(F)\setminus R$, we have 
\begin{align}\label{eq: rho F S}
\rho_F(S) = \frac{e_S}{|S|} \geq \frac{b}{a} = \rho(F)\ge 1.
\end{align}

Let $S\subseteq V({\rm sub}(F))\setminus R$. We aim to show $\rho_{{\rm sub}(F)}(S) \geq \frac{2b}{a+b} = \rho({\rm sub}(F))$. For each $i\in \{0,1,2\}$, let
$$S^*:= S\cap V(F) \enspace \text{and} \enspace S_i:= \{v \in S\setminus S^*: |N_{ {\rm sub}(F)}(v)\cap S^*| =i\}.$$
From these definitions, it is easy to see that the number of edges incident to $S$ in the graph ${\rm sub}(F)$ is
$e_{S^*} + e(S^*) + |S_1|+ 2|S_0|.$

If $S^*=\emptyset$, then $S$ is an independent set with each vertex having degree two, hence $\rho(S) = 2 \geq \frac{2b}{a+b} = \rho({\rm sub}(F)).$ Now we may assume $S^*\neq \emptyset$. Note that $S_1$ corresponds to a set of edges of $F$ incident with only one vertex of $S^*$, thus we have $e_{S^*} \geq e(S^*)  + |S_1|$. 
Also as $S_2$ corresponds to a set of edges whose both endpoints are in $S^*$, we have $|S_2|\leq e(S^*)$.
Thus, $|S_1|+|S_2| \leq e_{S^*}$. Together with $\frac{e_{S^*}}{|S^*|}\geq \rho(F)\ge 1$, we have
\begin{align}\label{eq: ratio}
\frac{ e_{S^*} + e(S^*) + |S_1|}{|S^*| + |S_1|+|S_2| } \geq 
\frac{ e_{S^*} + |S_2|+ |S_1|}{|S^*| + |S_1|+|S_2| } 
\geq
\frac{ e_{S^*}+e_{S^*} }{|S^*| +e_{S^*}} \stackrel{\eqref{eq: rho F S}}{\geq} \frac{2b}{a+b}.
\end{align}
Then, we have
\begin{eqnarray*}
\rho_{{\rm sub}(F)}(S) &=& 
\frac{e_{S^*} + e(S^*) + |S_1|+ 2|S_0|   }{|S^*|  + |S_1|+|S_2| + |S_0|} \geq \frac{e_{S^*} + e(S^*) + |S_1| + \frac{2b}{a+b}|S_0| }{ |S^*|  + |S_1| +|S_2| +|S_0| } \\
&\stackrel{\eqref{eq: ratio}}{\geq} & \frac{\frac{2b}{a+b}|S\setminus S_0| + \frac{2b}{a+b}|S_0| }{ |S\setminus S_0| + |S_0| } = \frac{2b}{a+b} = \rho({\rm sub}(F)).
\end{eqnarray*}
This proves the proposition.
\end{proof}

Let $\mathcal{F}_0$ be the minimal collection of balanced connected rooted bipartite graphs satisfying the following.
\begin{itemize}
\item $\mathcal{F}_0$ includes all stars rooted on the leaves;
\item $\mathcal{F}_0$ is closed under taking $1$-subdivision, i.e.~if $F \in \mathcal{F}_0$, then ${\rm sub}(F)\in \mathcal{F}_0$;
\item If $F \in \mathcal{F}_0$, then $F_{\ast}(1)\in \mathcal{F}_0$.
\end{itemize}

Note that Observation~\ref{obs:easy}, Propositions~\ref{prop:dense_balanced} and~\ref{prop:sub balanced} guarantee that every $F\in \mathcal{F}_0$ is bipartite, balanced and connected, and $\rho(F) \geq 1$. Moreover, for every rooted bipartite graph $(F,R)\in \mathcal{F}_0$, the root set $R$ is always an independent set.

\begin{lemma}\label{lem: sparsify}
Suppose for any $F \in \mathcal{F}_0$, there exists $\ell_0 = \ell_0(F)$ such that Conjecture~\ref{conj:subdiv} holds for $F^{\ell}$ for all $\ell\geq \ell_0$. If $a,b\in \mathbb{N}$, $b>a$, are such that $2 - \frac{a}{b}$ is balancedly realisable by a graph in $\mathcal{F}_0$, then $2- \frac{a+b}{2b}$ is also balancedly realisable by a graph in $\mathcal{F}_0$.
\end{lemma}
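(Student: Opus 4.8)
The plan is to mimic the proof of Lemma~\ref{lem: densify}, but now running the reduction in the ``sparsifying'' direction provided by the Subdivision conjecture rather than the ``densifying'' direction provided by Theorem~\ref{thm:es_reduction}. By hypothesis there is a graph $F\in\mathcal{F}_0$ with $\rho(F)=\frac{b}{a}$ and a threshold $\ell_0$ so that $\ex(n,F^{\ell})=\Theta(n^{2-\frac{a}{b}})$ for all $\ell\geq\ell_0$. I would work with $G:={\rm sub}(F)$. Since $\mathcal{F}_0$ is closed under $1$-subdivision, $G\in\mathcal{F}_0$, and by Proposition~\ref{prop:sub balanced} it is balanced with $\rho(G)=\frac{2b}{a+b}$ (using $b\geq a$, which holds since $R$ is independent and $\rho(F)\geq 1$). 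Hence $\frac{1}{2-r}=\rho(G)$ gives exactly $r=2-\frac{a+b}{2b}$, so this is the right target number; it remains to pin down $\ex(n,G^{\ell})$ up to constants.

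\medskip

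For the lower bound: $G$ is a balanced connected rooted bipartite graph with $\rho(G)=\frac{2b}{a+b}>0$, so Lemma~\ref{thm:lowerbound} gives $\ell_0'$ such that $\ex(n,G^{\ell})=\Omega\bigl(n^{2-\frac{a+b}{2b}}\bigr)$ for all $\ell\geq\ell_0'$. For the upper bound I would use the hypothesis that Conjecture~\ref{conj:subdiv} holds for $F^{\ell}$ when $\ell\geq\ell_0$, together with the identity ${\rm sub}(F^{\ell})=({\rm sub}(F))^{\ell}=G^{\ell}$ (subdividing after taking the rooted blow-up is the same as blowing up the subdivision, since the roots are untouched and the paths of length two between identified endpoints remain internally disjoint). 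Writing $\ex(n,F^{\ell})=O(n^{2-\frac{a}{b}})=O(n^{1+\alpha})$ with $\alpha=1-\frac{a}{b}>0$ (here I use $b>a$), Conjecture~\ref{conj:subdiv} applied to the bipartite graph $F^{\ell}$ yields
$$\ex(n,G^{\ell})=\ex\bigl(n,{\rm sub}(F^{\ell})\bigr)=O\bigl(n^{1+\frac{\alpha}{2}}\bigr)=O\bigl(n^{1+\frac12(1-\frac{a}{b})}\bigr)=O\bigl(n^{2-\frac{a+b}{2b}}\bigr),$$
matching the lower bound. Thus for every $\ell\geq\max\{\ell_0,\ell_0'\}$ we get $\ex(n,G^{\ell})=\Theta(n^{2-\frac{a+b}{2b}})$, and since $G=\mathrm{sub}(F)\in\mathcal{F}_0$ is balanced and connected with $\rho(G)=\frac{1}{2-(2-\frac{a+b}{2b})}$, the number $2-\frac{a+b}{2b}$ is balancedly realisable by a graph in $\mathcal{F}_0$.

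\medskip

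The only genuinely delicate point — and the step I would be most careful about — is the identity ${\rm sub}(F^{\ell})=({\rm sub}(F))^{\ell}$ as \emph{rooted} graphs, so that the hypothesised instance of Conjecture~\ref{conj:subdiv} for $F^{\ell}$ actually has conclusion about $G^{\ell}$ and not some other graph. One must check that the blow-up operation $(\cdot)^{\ell}_R$ glues copies only along root vertices, none of which is an edge-midpoint of a subdivided edge; since the convention in the excerpt is precisely that ${\rm sub}(F)$ inherits the same root set $R$ as $F$, the two operations commute and the equality holds, with the resulting subdivision paths of length two between any identified pair of endpoints remaining pairwise internally disjoint. Everything else is bookkeeping: that $b\geq a$ (so $\rho(G)$ has the claimed value and $\alpha\geq 0$), that $b>a$ (so $\alpha>0$ and Conjecture~\ref{conj:subdiv} is applicable), and that $F^{\ell}$ is bipartite (which it is, as a blow-up of a bipartite graph along an independent set of roots). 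Combining the two bounds and invoking closure of $\mathcal{F}_0$ under $1$-subdivision completes the proof.
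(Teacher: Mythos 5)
Your proposal is correct and follows essentially the same route as the paper: define $G={\rm sub}(F)$, use Proposition~\ref{prop:sub balanced} and closure of $\mathcal{F}_0$ to get $G\in\mathcal{F}_0$ balanced with $\rho(G)=\frac{2b}{a+b}$, get the lower bound from Lemma~\ref{thm:lowerbound}, the upper bound from the hypothesised instance of Conjecture~\ref{conj:subdiv} applied to $F^\ell$, and the identity ${\rm sub}(F^\ell)={\rm sub}(F)^\ell$ (justified by the root set being independent) to conclude. The extra care you devote to checking $b\geq a$, $b>a$, and the commutation of subdivision with rooted blow-up matches the points the paper itself flags.
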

\begin{proof}
By the assumption, there exist a balanced connected rooted bipartite graph $F\in \mathcal{F}_0$ and $\ell_0$ such that $\rho(F)=\frac{b}{a}$ and $\ex(n,F^{\ell})= \Theta( n^{2-\frac{a}{b}})= \Theta( n^{1 + \frac{b-a}{b}})$ for all $\ell\ge \ell_0$.

By Proposition~\ref{prop:sub balanced}, ${\rm sub}(F)\in \mathcal{F}_0$ is also a balanced connected rooted bipartite graph with $\rho({\rm sub}(F)) = \frac{2b}{a+b}$. Hence, Lemma~\ref{thm:lowerbound} implies that there exists some $\ell_1\in \mathbb{N}$ such that, for all $\ell \geq \ell_1$ we have  
$$
\ex(n, {\rm sub}(F)^{\ell }) = \Omega( n^{2- \frac{1}{\rho({\rm sub}(F))} }) = \Omega( n^{2-\frac{a+b}{2b}}).
$$
On the other hand, by assumption, Conjecture~\ref{conj:subdiv} holds for $F^{\ell}$ for all $\ell\ge \ell_0$, i.e.
$$\ex(n, {\rm sub}(F^{\ell})) = O( n^{1+ \frac{b-a}{2b}}) = O(n^{2-\frac{a+b}{2b}}).$$
Note that the root set of $F\in \mathcal{F}_0$ is an independent set, so taking $1$-subdivision of a rooted blow-up of $F$ is the same as taking a rooted blow-up of the $1$-subdivision of $F$, that is, ${\rm sub}(F^{\ell}) = {\rm sub}(F)^{\ell}$. 
Thus, for any $\ell\ge \max\{\ell_0,\ell_1 \}$, 
$\ex(n, {\rm sub}(F)^{\ell })=\Theta( n^{2-\frac{a+b}{2b}})$. Consequently, $2- \frac{a+b}{2b}$ is balancedly realisable by the graph $ {\rm sub}(F)\in \mathcal{F}_0$.
\end{proof}

Now we are ready to prove Theorem~\ref{thm:subd-rational}. In fact, a weaker version of Conjecture~\ref{conj:subdiv} already implies Conjecture~\ref{conj:es} as follows.

\begin{theorem}\label{thm:e-s}
Suppose that for each $F\in \mathcal{F}_0$, there exists $\ell_0=\ell_0(F)$ such that Conjecture~\ref{conj:subdiv} holds for $F^{\ell}$ for all $\ell\geq \ell_0$. Then Conjecture~\ref{conj:es} holds.
\end{theorem}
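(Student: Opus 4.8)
The plan is to derive Conjecture~\ref{conj:es} by showing that the set of balancedly realisable numbers (via graphs in $\mathcal{F}_0$) is dense in $[1,2]$, and then invoking the fact that every balancedly realisable number is in particular realisable. The starting point is that $2-\frac{a}{a+1}$ is balancedly realisable by a large Theta graph for every $a\in\mathbb{N}$; a Theta graph is a rooted blow-up of a path rooted on its two endpoints, which is a star-free member of $\mathcal{F}_0$ only after noting that a path can be built from stars under the two closure operations. More carefully, I would first observe that $2-\frac{a}{a+1}$ for $a\ge 1$ lies in $\mathcal{F}_0$: the path $P_{a}$ rooted on its endpoints is obtained from the edge (a $1$-star) by repeatedly applying $\mathrm{sub}(\cdot)$ and $F_*(1)$, and its blow-up is the Theta graph $\theta_{a,\ell}$, whose extremal number is $\Theta(n^{1+1/a})=\Theta(n^{2-a/(a+1)})$ for large $\ell$ by Faudree--Simonovits and Conlon. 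So $\rho=\frac{a+1}{a}=\frac{1}{2-r}$ with $r=2-\frac{a}{a+1}$, and these are balancedly realisable by graphs in $\mathcal{F}_0$.

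Next, I would exploit the two operations that preserve balanced realisability within $\mathcal{F}_0$: Lemma~\ref{lem: densify} (densification, $2-\frac{a}{b}\mapsto 2-\frac{a}{a+b}$, which only uses $F_*(1)$ and Erd\H{o}s--Simonovits reduction, hence stays in $\mathcal{F}_0$) and Lemma~\ref{lem: sparsify} (subdivision, $2-\frac{a}{b}\mapsto 2-\frac{a+b}{2b}$, which uses the hypothesis that Conjecture~\ref{conj:subdiv} holds for blow-ups of members of $\mathcal{F}_0$). Written multiplicatively in terms of $\rho=\frac{b}{a}$, densification sends $\rho\mapsto\rho+1$ and subdivision sends $\rho\mapsto\frac{2\rho}{\rho+1}$, equivalently $\frac{1}{\rho}\mapsto\frac{1}{2}\bigl(\frac{1}{\rho}+1\bigr)$. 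Setting $x:=\frac{1}{\rho}=2-r\in(0,1]$, the two moves available are $x\mapsto\frac{x}{x+1}$ and $x\mapsto\frac{x+1}{2}$, and we start from the dense-at-$0$ set $\{\frac{1}{a+1}:a\ge 1\}$. The goal becomes: the closure of $\{\frac1{a+1}\}$ under $x\mapsto\frac{x}{1+x}$ and $x\mapsto\frac{x+1}{2}$ is dense in $(0,1)$.

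To prove this density, I would argue as follows. The map $x\mapsto\frac{x+1}{2}$ is an affine contraction toward $1$ with ratio $\frac12$; the map $x\mapsto\frac{x}{1+x}$ conjugated by $y=\frac1x$ becomes $y\mapsto y+1$. So in the $y=1/x$ coordinate the semigroup is generated by $T_1\colon y\mapsto y+1$ and $T_2\colon y\mapsto \frac{2y}{y+1}$, acting on $y\ge 1$, starting from the integers $\{a+1:a\ge1\}=\{2,3,4,\dots\}$, and we want density of the orbit in $(1,\infty)$. Composing, $T_2 T_1^{k}$ sends $y$ to $\frac{2(y+k)}{y+k+1}$; as $k$ ranges over $\mathbb{N}$ these accumulate at $2$ from below, and more generally one can reach values arbitrarily close to any point of $(1,2)$, then push back up by $T_1$ to cover $(1,\infty)$ densely — I would make this precise by showing the orbit is dense near $2$ and that $T_1$-translates of a set dense near $2$ together with further $T_2$-applications fill $(1,\infty)$. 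Concretely: given a target $y^*\in(1,\infty)$ and $\varepsilon>0$, pick a large integer $N$, apply $T_1^{N}$ to some starting integer to land near $N$, then alternately apply $T_2$ (which roughly halves the ``excess over the fixed structure'') and small $T_1$-corrections to home in; the key quantitative point is that the combined maps form a contraction-like family whose orbit mesh shrinks. In the $x$-coordinate this is cleaner: $x\mapsto\frac{x+1}{2}$ alone, iterated, sends any $x_0\in(0,1)$ to a sequence converging to $1$ with the set of partial values together with the images of the dense-at-$0$ initial set under arbitrary words being dense; a standard lemma says that if a set $S\subseteq(0,1)$ has $0$ as a limit point and is closed under an affine contraction $x\mapsto\lambda x+(1-\lambda)$ with fixed point $1$ and another map whose iterates push points back toward $0$, then $S$ is dense — here $x\mapsto\frac{x}{1+x}$ provides arbitrarily small images of any fixed $x$, giving new accumulation points of $S$ throughout $(0,1)$ after re-inflating by $\frac{x+1}{2}$.

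The main obstacle is precisely this last density argument: one must show the two-generator semigroup action genuinely fills a dense subset of $(0,1)$ rather than, say, a Cantor-like set. I expect the cleanest route is to fix any dyadic-type target, observe that $x\mapsto\frac{x+1}{2}$ lets us realise (approximately) any number of the form $1-2^{-m}t$ for $t$ in our current set, and that $x\mapsto\frac{x}{1+x}$ applied to the dense-near-$0$ seed $\{\frac1{a+1}\}$ lets $t$ itself be taken arbitrarily small and of essentially arbitrary ``fractional shape'' (since $\frac{1/(a+1)}{1+1/(a+1)}=\frac{1}{a+2}$, iterating just shifts the index, so one needs to interleave with $\frac{x+1}{2}$ to generate new scales). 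A careful bookkeeping of which $x$-values are reachable — likely phrased as: every number of the form $2-\frac{p}{q}$ with $\frac pq$ in a suitable dense set is balancedly realisable in $\mathcal{F}_0$ — then yields density of realisable numbers, and since each such number is realisable by an actual graph (a blow-up $F^\ell$ for large $\ell$, which is a single graph), Conjecture~\ref{conj:es} follows in its $\Theta$-form. I would close by remarking that only the $\Theta$ (not the asymptotic-constant) version is obtained, matching the statement of Conjecture~\ref{conj:es}.
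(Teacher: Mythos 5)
Your plan has a fatal mismatch between what you aim to prove and what Conjecture~\ref{conj:es} actually asserts. The conjecture demands that \emph{every} rational $r\in[1,2]$ be realisable; a density statement --- that the set of balancedly realisable numbers is dense in $[1,2]$ --- is strictly weaker and does not imply it. A dense subset of $\mathbb{Q}\cap[1,2]$ can easily fail to contain a prescribed rational, and nothing in the problem lets you pass from ``a nearby rational is realisable'' to ``this one is.'' So even if you completed the semigroup-density argument (which you acknowledge you have not: you flag the Cantor-set obstruction yourself), you would not have proved the theorem.

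The paper's proof does something genuinely different and sharper: it shows by induction on $a+b$ that \emph{every} $2-\frac{a}{b}$ with $1\le a<b$ is balancedly realisable by a graph in $\mathcal{F}_0$. In your coordinates $\rho=b/a$, the two forward moves are $T_1\colon\rho\mapsto\rho+1$ (Lemma~\ref{lem: densify}) and $T_2\colon\rho\mapsto\frac{2\rho}{\rho+1}$ (Lemma~\ref{lem: sparsify}); the induction runs these \emph{backwards}. Taking $\gcd(a,b)=1$: if $b>2a$ one applies $T_1^{-1}$ to land at $(a,b-a)$ with smaller sum; if $a<b<2a$ one applies $T_2^{-1}$ to land at $(2a-b,b)$ with smaller sum; the case $b=2a$ forces $a=1$. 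The base case $a=1$ is handled by $k$-stars (unbalanced complete bipartite blow-ups), giving $\rho\in\{1,2,3,\dots\}$ directly. This Euclidean-algorithm descent reaches \emph{every} coprime pair, which is what the conjecture needs. I'd also note a side error in your setup: Theta graphs give $\rho=\frac{a+1}{a}$, i.e.\ $x=2-r=\frac{a}{a+1}$ accumulating at $1$, not the ``dense-at-$0$ set $\{\frac{1}{a+1}\}$'' you write; you appear to have conflated the Theta-graph seed with the star seed. Either seed works for the paper's induction precisely because exactness, not density, is what the descent delivers.
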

\begin{proof}
We will show that for all $a,b\in \mathbb{N}$ with $a< b$, the number $2- \frac{a}{b}$ is balancedly realisable under the assumption of theorem. Note that unbalanced complete bipartite graph (which is a blow-up of a star rooted on its leaves) shows that the number $2-\frac{a}{b}$ is balancedly realisable by a graph in $\mathcal{F}_0$ for $a=1$. We use induction on $a+b$.
Assume that $(a,b)$ is a minimum counterexample. 

If $b> 2a$, then $b-a> a$ and $a+(b-a) < a+b$. Hence,
by the induction hypothesis, $2- \frac{a}{b-a}$ is balancedly realisable by a graph $F\in \mathcal{F}_0$ and Lemma~\ref{lem: densify} implies that $2-\frac{a}{b}$ is also balancedly realisable by $F_*(1)\in \mathcal{F}_0$.

If $a<b<2a$, then $(2a-b)+b  < a+b$ and $2a-b\geq 1$. Hence, by the induction hypothesis, $2 - \frac{2a-b}{b}$ is balancedly realisable by a graph in $\mathcal{F}_0$ and Lemma~\ref{lem: sparsify} implies that $2 -\frac{2a-b+b}{2b} = 2-\frac{a}{b}$ is balancedly realisable by a graph in $\mathcal{F}_0$. Hence, $2-\frac{a}{b}$ is balancedly realisable for all natural numbers $b>a$.
As $1$ and $2$ are trivially realisable, this shows that every rational number $r \in [1,2]$ is realisable if the assumption of Theorem~\ref{thm:e-s} is true.
\end{proof}

\section{Concluding Remarks}\label{sec-remark}

\subsection{Bipartite graphs with large radius}

Our results provide infintely many realisable numbers most of which are somewhat closer to 2 than 1. The reason for this is that the graph $D_{t,s}^{\ell}$ we considered has  radius two and gets denser as we increase the parameter $t$ and $s$, and Lemma~\ref{lem: densify} also produces new realisable number which is bigger than the original. Hence, to attack Conjecture~\ref{conj:es}, we need a method to deal with sparse graphs.

One obvious way to is to prove Conjecture~\ref{conj:subdiv} for blow-ups of balanced rooted bapartite graphs.
As Theorem~\ref{thm:subd-rational}  suggests, this implies Conjecture~\ref{conj:es}. Another natural way to pursue is to consider a balanced tree with large radius, and study its blow-up. Towards this direction, 
we are only able to extend our method slightly to obtain the following result, regarding a blow-up of a balanced tree with radius three. Note that $\frac{10}{7}$ does not provide new realisable sequence as Theorem~\ref{thm:mult_main} shows this is also realisable by $D_{3,1}^{\ell}$. We include its proof in the appendix.

\begin{theorem}\label{thm-T47}
	There exists $\ell_0\in \mathbb{N}$ such that for all $\ell>\ell_0$, we have $\ex(n , T_{4,7}^\ell) = \Theta(n^{10/7})$.
\end{theorem}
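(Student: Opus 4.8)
We follow the same two-sided strategy used for $D_{t,s}^{\ell}$. The lower bound $\ex(n,T_{4,7}^{\ell}) = \Omega(n^{10/7})$ is immediate: $T_{4,7}$ is a balanced rooted tree (as recalled after the definition of $T_{a,b}$) with $\rho(T_{4,7}) = 7/4$, so Lemma~\ref{thm:lowerbound} gives $\ex(n,T_{4,7}^{\ell}) = \Omega(n^{2-\frac{1}{\rho(T_{4,7})}}) = \Omega(n^{2-4/7}) = \Omega(n^{10/7})$ for all $\ell$ large. Hence the whole content is the upper bound: there exist $Q,n_0$ such that every $n$-vertex bipartite graph with $\delta(G)\ge Q n^{3/7}$ contains $T_{4,7}^{\ell}$, and then the usual passage (via \cite{erdos1970} to a dense subgraph, and halving to a bipartite subgraph) upgrades this to $\ex(n,T_{4,7}^{\ell})=O(n^{10/7})$.

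\textbf{Structure of the embedding.} Recall $T_{4,7}$ is a path $v_1v_2v_3v_4$ with root leaves attached so that $v_1$ and $v_4$ each receive two roots and $v_2,v_3$ each receive one (a $7$-edge, $4$-non-root-vertex balanced tree of radius $3$). In the blow-up $T_{4,7}^{\ell}$ the roots split into clusters glued along the $v_i$'s; crucially the path has length three, so unlike $D_{t,s}^{\ell}$ we must build \emph{three} nested "levels" of the path, not two. I would set up disjoint level sets $L_0 \subseteq V$, with $L_1 \subseteq \Gamma(L_0)$, $L_2\subseteq\Gamma(L_1)$, and so on, each far larger than the previous, designed so that: the images of the $v_1$-copies sit in one level, $v_2$-copies in the next, $v_3$-copies in the next, $v_4$-copies in the last, and every root cluster of $T_{4,7}$ has a large "reservoir" of common neighbours hanging off the appropriate $v_i$-level. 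The key reusable gadget is a dependent-random-choice step (Lemma~\ref{lem:drc}) combined with the embedding lemma (Lemma~\ref{lem:embed}, or rather the analogous statement for trees attached along a path): at each level one shows that either the level expands enough that a Hall-type argument (Lemma~\ref{lem:star}) produces the disjoint neighbourhood reservoirs needed to move down one more step, or else a small set with huge common neighbourhood appears, which directly embeds $T_{4,7}^{\ell}$ and contradicts $T_{4,7}^{\ell}\not\subseteq G$. Iterating this once per vertex of the path $v_1,v_2,v_3,v_4$, and book-keeping the exponents so that the surviving candidate set for the $\ell$ path-copies still has size $\ge \ell$ at the end, yields the copy of $T_{4,7}^{\ell}$; the exponent $3/7$ is exactly what makes the arithmetic $d^{?}/n^{?}$ close up after three path-steps, just as $\frac{t}{st-1}$ did after the two-step argument in Lemma~\ref{lem:mult_main}.

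\textbf{The main obstacle.} The delicate part is the \emph{third} level: after descending two steps from $L_0$ one has committed a lot of vertices and the remaining degree/expansion budget is tight, and one also has to ensure the candidate sets for the various root clusters attached at $v_1,v_2,v_3,v_4$ stay pairwise disjoint while still being large enough — this is the analogue of Claim~\ref{claim:repetition} and its repeated application, but now with an extra layer, so the clean "$A_1\supseteq\cdots\supseteq A_{t-1}$" nesting has to be replaced by a more careful interleaving of the descent along the path with the absorption of the clusters. I expect the bulk of the proof to be verifying that the exponents chosen for $|L_0|,|L_1|,|L_2|,|L_3|$ (powers of $n$ with denominators $7$) simultaneously satisfy all the inequalities forced by the three applications of Lemma~\ref{lem:drc} and Lemma~\ref{lem:star}; this is routine but fiddly, which is why the authors defer it to the appendix. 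One further point to watch: because $T_{4,7}$ has radius three rather than two, one cannot simply run dependent random choice "from the top" in one shot, so the argument genuinely needs the layered/iterative form sketched above rather than the shorter two-level version.
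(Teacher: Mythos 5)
Your high-level outline---lower bound from Lemma~\ref{thm:lowerbound}, upper bound via minimum-degree reduction followed by an iterated DRC/Hall argument with exponent bookkeeping---matches the paper's strategy, and you correctly pinpoint the descent through the third level as the crux. However, there are two genuine gaps in the plan. First, you propose to reuse Lemma~\ref{lem:drc} together with Lemma~\ref{lem:embed} ``or rather the analogous statement for trees attached along a path'', but Lemma~\ref{lem:embed} is engineered for the radius-2 graph $D_{t,s}^\ell$ and does not have a near-automatic analogue here; the appendix actually introduces a new expansion lemma (Lemma~\ref{lem:expand}) whose hypotheses $(e(A,B)\geq 4\ell|A|^2$ and $|B|\leq e(A,B)/10\ell)$ and whose direct embedding of the length-3 path are qualitatively different from the common-neighbourhood-of-$s$-sets setup of Lemmas~\ref{lem:drc}/\ref{lem:embed}. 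Second, and more seriously, your sketch has no analogue of the matching structure that makes the three-level descent work: Claim~\ref{cl: clclaim} does not merely shrink a candidate set by a factor $n^{-1/7}$ at each step, it also produces a bijection $f_i\colon A_i\to B_i$ realising a perfect matching between consecutive path-levels, and the final embedding composes $f_1,f_2,f_3$ to pull back a chain of perfect matchings onto sets $B_0',\dots,B_3'$ of size $\geq\ell$. Without that bijection being tracked through the iteration, one cannot guarantee that the $\ell$ path copies are vertex-disjoint.

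Two smaller misalignments are symptomatic of not having fully pinned down the mechanism. Your description of $T_{4,7}$ (two roots at each of $v_1,v_4$, one at each of $v_2,v_3$) gives $6$ root leaves and hence $9$ edges, contradicting your own ``$7$-edge'' count; $T_{4,7}$ in fact has exactly one root leaf at each of the four path vertices. And your claim that the level sets are ``each far larger than the previous'' runs in the wrong direction for this argument: in the paper's proof the candidate set starts at size $\ell d=\ell n^{3/7}$ and \emph{shrinks} by $n^{-1/7}$ at each of the three steps, landing precisely at $\ell$. So the right picture is not growing levels $L_0\subseteq\Gamma^{-1}(L_1)\subseteq\cdots$ as in Lemma~\ref{lem:mult_main}, but a fixed chain of distinguished vertices $u_0,\dots,u_3$ with nested, shrinking, matching-linked candidate sets hanging off them.
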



It would be interesting to generalise Theorem~\ref{thm:mult_main} as follows. For $s, t\in \mathbb{N}$, consider the following balanced tree with large radius. Let $H_{t,s}$ be the rooted tree obtained from a $t$-star by subdividing each edge $s$ times and by attaching a leaf to the centre of the $t$-star; the root set of $H_{t,s}$ is its leaf-set. 
It is easy to check that $H_{t,s}$ is a balanced tree with $\rho(H_{t,s})= \frac{1+ (s+1)t}{ 1+ st}$. The following  seems plausible.

\begin{figure}[h]
\centering
\begin{tikzpicture}[scale=0.8]

\draw
(0,3)--(1,3)
(1,3)--(2.5,2)
(1,3)--(2.5,2.5)
(1,3)--(2.5,3)
(1,3)--(2.5,3.5)
(1,3)--(2.5,4)

(2.5,2)--(3.5,2)
(2.5,2.5)--(3.5,2.5)
(2.5,3)--(3.5,3)
(2.5,3.5)--(3.5,3.5)
(2.5,4)--(3.5,4)

(4.5,2)--(5.5,2)
(4.5,2.5)--(5.5,2.5)
(4.5,3)--(5.5,3)
(4.5,3.5)--(5.5,3.5)
(4.5,4)--(5.5,4)

(5.5,2)--(6.5,2)
(5.5,2.5)--(6.5,2.5)
(5.5,3)--(6.5,3)
(5.5,3.5)--(6.5,3.5)
(5.5,4)--(6.5,4)
;

\filldraw[fill=black] (0, 3) circle (2pt);
\filldraw[fill=white] (1, 3) circle (2pt);

\filldraw[fill=white] (2.5, 2) circle (2pt);
\filldraw[fill=white] (2.5, 2.5) circle (2pt);
\filldraw[fill=white] (2.5, 3) circle (2pt);
\filldraw[fill=white] (2.5, 3.5) circle (2pt);
\filldraw[fill=white] (2.5, 4) circle (2pt);

\filldraw[fill=white] (3.5, 2) circle (2pt);
\filldraw[fill=white] (3.5, 2.5) circle (2pt);
\filldraw[fill=white] (3.5, 3) circle (2pt);
\filldraw[fill=white] (3.5, 3.5) circle (2pt);
\filldraw[fill=white] (3.5, 4) circle (2pt);

\filldraw[fill=white] (4.5, 2) circle (2pt);
\filldraw[fill=white] (4.5, 2.5) circle (2pt);
\filldraw[fill=white] (4.5, 3) circle (2pt);
\filldraw[fill=white] (4.5, 3.5) circle (2pt);
\filldraw[fill=white] (4.5, 4) circle (2pt);

\filldraw[fill=white] (5.5, 2) circle (2pt);
\filldraw[fill=white] (5.5, 2.5) circle (2pt);
\filldraw[fill=white] (5.5, 3) circle (2pt);
\filldraw[fill=white] (5.5, 3.5) circle (2pt);
\filldraw[fill=white] (5.5, 4) circle (2pt);

\filldraw[fill=black] (6.5, 2) circle (2pt);
\filldraw[fill=black] (6.5, 2.5) circle (2pt);
\filldraw[fill=black] (6.5, 3) circle (2pt);
\filldraw[fill=black] (6.5, 3.5) circle (2pt);
\filldraw[fill=black] (6.5, 4) circle (2pt);

\node at (4 , 2) {$\dots$};
\node at (4 , 2.5) {$\dots$};
\node at (4 , 3) {$\dots$};
\node at (4 , 3.5) {$\dots$};
\node at (4 , 4) {$\dots$};

\tikzset{
    position label/.style={
       below = 3pt,
       text height = 1.5ex,
       text depth = 1ex
    },
   brace/.style={
     decoration={brace, mirror},
     decorate
   }
}

\node (r1) at (6.8,2) {};
\node (r2) at (6.8,4) {};
\node (s1) at (2.5 , 1.7) {};
\node (s2) at (5.5 , 1.7) {};
\node at (7.5,3) {$t$};
\node at (4 , 1) {$s$};

\draw [brace] (r1.east) -- (r2.east);
\draw [brace] (s1.south) -- (s2.south);

\end{tikzpicture}
\caption{$H_{t,s}$}
\end{figure}

\begin{problem}\label{conj:longtree}
For any positive integers $s$ and $t$, there exists $\ell_0 = \ell_0(s,t)$ such that
$$\ex(n , H_{t,s}^{\ell}) = \Theta(n^{1 + \frac{t}{1 + (s + 1)t} })$$
for any integer $\ell \geq \ell_0$.
\end{problem}

If this is true, then it would provide infinitely many new limit points $1+ \frac{1}{m}$ in the set of realisable number and this together with Lemma~\ref{lem: densify} would provide more realisable numbers.
The method we used in Lemma~\ref{lem:mult_main} cannot be directly  generalised to this problem. In particular, we need to prove that $i$-th neighbourhood of a vertex has size proportional to the $i$-th power of the average degree of $G$. This seems difficult to prove without a major improvement of the method.

\subsection{The $1$-subdivision of complete bipartite graphs.}
We may consider the 1-subdivision of complete bipartite graphs as an example of Conjecture~\ref{conj:subdiv}. If Conjecture~\ref{conj:subdiv} is true, then 
$$\ex(n,{\rm sub}(K_{s,t})) = \Theta(n^{\frac{3}{2} - \frac{1}{2s} })$$ 
must hold for large $t$, where the lower bound is obtained from Lemma~\ref{thm:lowerbound}. The best known upper bound is  
by Conlon and Lee~\cite{2018conlonlee} $\ex(n,{\rm sub}(K_{s,t})) \leq O(n^{\frac{3}{2} - \frac{1}{12t}})$.
Improving their result, we are able to prove the following proposition with an exponent depending only on $s$. We remark that Janzer~\cite{Janzer2018} independently obtained the same result.

\begin{proposition}\label{prop: subd complete bipartite}
	For $t,s\in \mathbb{N}$ with $t \geq s$, we have
	$\ex(n,{\rm sub}(K_{s,t})) \leq O(n^{\frac{3}{2} - \frac{1}{4s-2}})$.
\end{proposition}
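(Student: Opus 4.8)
The goal is to bound the number of edges in an $n$-vertex graph $G$ that contains no copy of ${\rm sub}(K_{s,t})$, i.e.\ no configuration consisting of $s$ ``left'' vertices and $t$ ``right'' vertices together with $st$ internally disjoint paths of length two joining every left vertex to every right vertex. The natural framework is the one sketched in the motivation paragraph preceding Conjecture~\ref{conj:subdiv}: pass to the auxiliary ``square'' graph $G^*$ on $V(G)$ where $uv\in E(G^*)$ whenever $u,v$ have a common neighbour in $G$, and try to locate a $K_{s,t}$ in $G^*$ whose witnessing paths of length two can be chosen to be internally disjoint. So first I would reduce, by the standard argument (pass to a bipartite subgraph, then to an almost-regular subgraph via the Erd\H{o}s--Simonovits densification as used in the proof of Lemma~\ref{lem:mult_main}), to the case where $G$ is bipartite with parts of size $O(n)$ and every vertex has degree $\Theta(d)$ with $d = n^{1/2 - 1/(4s-2)+o(1)}$, and derive a contradiction.

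The main device will be a dependent-random-choice / counting argument applied to the ``cherries'' (paths of length two) of $G$. Pick a random vertex $w$; its neighbourhood $N(w)$ has expected size $\Theta(d)$ and, more importantly, for a random $s$-subset $W$ of a fixed side one can hope that the common ``cherry-neighbourhood'' — the set of vertices $v$ joined to every vertex of $W$ by a path of length two — is large, while simultaneously no small subset of that cherry-neighbourhood has too large a common cherry-neighbourhood. Concretely I would count, over all pairs consisting of an $s$-set $W$ and a vertex $v$, the number of ways to choose internally disjoint length-two paths from each element of $W$ to $v$; convexity (Jensen) applied to the degree sequence of $G$ and of $G^*$ gives that this count is at least roughly $e(G)^{\text{(something)}}/n^{\text{(something)}}$, and if this exceeds the ``trivial'' count forced by the $K_{s,t}$-freeness of the cherry structure, we get our copy of ${\rm sub}(K_{s,t})$. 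The bookkeeping of internal disjointness is what forces the exponent to depend on $s$ rather than $t$ (as in the idealised $O(n^{3/2-1/(2s)})$): when we embed the $t$ right-vertices greedily one at a time, each new right-vertex together with its $s$ paths uses only $s$ new middle vertices, and the constraint that these middles avoid the (bounded) set used so far is absorbed because $t\ge s$ is constant and $d$ is polynomially large.

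More precisely, the key step I expect to carry out is: show that if $e(G) = C n^{3/2 - 1/(4s-2)}$ with $C$ large, then there is a vertex $w$ and a set $A\subseteq N_G(w)$ of size $\ge n^{1/2 + \Omega(1)}$ such that every $s$-subset $S\subseteq A$ has $d_{G^*}(S)\ge$ some growing function (say $\ge n^{\Omega(1)}$), where $d_{G^*}(S)$ counts vertices reachable from all of $S$ by length-two paths; this is the analogue of Lemma~\ref{lem:drc}, with $G^*$ in place of $G$ but with the crucial extra input that the cherries realising these adjacencies can be taken edge-disjoint from a bounded reservoir. Then a second application (an analogue of Lemma~\ref{lem:embed}) lets us pick the $t$ right-vertices greedily inside $\bigcap_{a\in S} N_{G^*}(a)$ for a well-chosen $s$-set $S$, each time routing $s$ fresh internally disjoint paths, to build ${\rm sub}(K_{s,t})$ — contradiction. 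The hardest part is precisely the disjointness control: one must ensure that when a middle vertex $x$ is used to realise the adjacency $a\sim v$ in $G^*$, it is not simultaneously demanded by another pair, and that the random choices in the dependent-random-choice step can be made to respect a partition of the cherries by their middle vertices. I would handle this by first deleting, for each vertex $x$, all but $O(d)$ of the cherries centred at $x$ (this loses only a constant factor in the cherry count since $G$ is almost $d$-regular), so that every potential $G^*$-edge is realised by boundedly many cherries and a linear-algebra-free greedy matching argument (à la Lemma~\ref{lem:star} / Hall) produces the needed disjoint system; the remaining estimates are routine powers-of-$d$ bookkeeping, tuned so that the exponent $\tfrac12 - \tfrac1{4s-2}$ is exactly the threshold at which the cherry count overtakes the $K_{s,t}$-free bound.
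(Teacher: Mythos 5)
Your approach is genuinely different from the paper's, but it has two concrete gaps.

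The paper does not pass to the square graph $G^*$ at all. Instead it observes that ${\rm sub}(K_{s,t})$ is a subgraph of $D_{s,1}^{t}$ (delete the core root), so a direct application of Theorem~\ref{thm:mult_main} already yields $O(n^{3/2-\frac{1}{4s+2}})$. The improvement to $\frac{1}{4s-2}$ is obtained by rerunning the level-set embedding of Lemma~\ref{lem:mult_main} with a small twist: rather than choosing a "core" set $L_0$ by averaging over $(s-1)$-tuples (which costs a factor), one fixes an arbitrary $u_0$, takes $L_0\subseteq\Gamma_G(u_0)$ of size $qd$, uses Hall to match $L_0$ into $L_1$ via $g$, and then runs Stage~2 verbatim; at the end the vertices of $g^{-1}(A)$ and $u_0$ supply the paths for the last left vertex. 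The dependence on $s$ rather than $t$ comes out mechanically from this level-set bookkeeping, exactly as in Lemma~\ref{lem:mult_main}.

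Two specific problems with your sketch. First, the step "delete, for each vertex $x$, all but $O(d)$ of the cherries centred at $x$ --- this loses only a constant factor since $G$ is almost $d$-regular" is wrong: in an almost $d$-regular graph each vertex centres $\Theta(d^2)$ cherries, so truncating to $O(d)$ of them loses a factor of $d$, which kills the count. Second, and more fundamentally, you never exhibit where the exponent $\frac{1}{4s-2}$ would come from. The $G^*$/cherry approach you describe is essentially the Conlon--Lee argument, and carried out honestly it yields an exponent depending on $t$ (they got $\frac{1}{12t}$), precisely because the internal-disjointness bookkeeping is the bottleneck; an idealised cherry argument would instead aim at the conjectured $\frac{1}{2s}$. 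Asserting that "routine powers-of-$d$ bookkeeping" lands exactly on $\frac{1}{4s-2}$ is not a proof --- that threshold is the content of the proposition, and it is produced by the paper's matching trick inside the $D^{\ell}_{t,s}$ embedding scheme, not by the $G^*$ heuristic.
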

\begin{proof}[Proof sketch]
	Note that ${\rm sub}(K_{s,t})$ is a subgraph of $D_{s,1}^{t}$. Hence a direct application of Theorem~\ref{thm:mult_main} implies $\ex(n,{\rm sub}(K_{s,t})) \leq O(n^{\frac{3}{2} - \frac{1}{4s+2}})$. 
	To improve the number $\frac{1}{4s+2}$ to $\frac{1}{4s-2}$, we can follow the proof of Theorem~\ref{thm:mult_main}  with the following minor modifications.
	
	Let us choose the numbers $n\gg q \gg s,t$.
	We may assume that $G$ is a bipartite graph with $n$ vertices and the minimum degree $4qd$ with $ d := n^{1 - \frac{1}{4s-2}}$. We choose an arbitrary vertex $u_0 \in V(G)$ and $L_{-1} := \{ u_0 \}$. Let $L_0 \subseteq \Gamma_G (u_0)$ with $|L_0| = qd$. By using Hall's theorem, we can find a set $L_1 \subseteq \Gamma_G (L_0) \setminus \{ r_0 \}$ and a perfect matching on $G[L_0,L_1]$. For each $v \in L_0$, let $g(v) \in L_1$ be the vertex adjacent to $v$ in the matching.
	
	For each $w \in L_1$, we can find a collection of pairwise disjoint sets $\{ \Gamma_{w} \subseteq N_G(w)\setminus(L_0\cup L_1) : w\in L_1 \}$ with $|\Gamma_{w}| \geq 2d$. Let $L_2 := \bigcup_{w \in L_1} \Gamma_w$ and $L_3:= \Gamma_G(L_2)\setminus (L_0\cup L_1)$. Now the rest of the proof follows Stage 2 in the proof of Theorem~\ref{thm:mult_main} with $s=2$ and $\ell=t$, except that we have to use vertices of $g^{-1}(A)$ and $u_0$ at the end to obtain a copy of ${\rm sub}(K_{s,t})$.
\end{proof}

\subsection{Phase transition with respect to the number of blown-up copies}
It would be interesting to determine for what $\ell$ the order of magnitude changes in Theorem~\ref{thm:mult_main}. Note that the upper bound 
$$\ex(n , D_{t-1,s-1}^{\ell}) = O(n^{2 - \frac{t}{st-1}})$$ is not tight when $\ell$ is small. Indeed, it is known that 
$$\ex(n , D_{2,1}^2) = \Theta(n^{4/3}) \quad \text{and} \quad \ex(n , D_{2,1}^{\ell}) = \Theta(n^{7/5})$$
for $\ell$ sufficiently large. Indeed, for $D_{2,1}^2$, the lower bound follows from the fact that $D_{2,1}^2$ contains $C_6$ as a subgraph, and the upper bound follows from a reduction theorem of Faudree and Simonovits~\cite{faudree1983}.

Thus, for the blow-up of the graph $D_{2,1}$, the transition happens when the number of copies $\ell$ is larger than 2. This is in contrast to the well-known conjecture for even-cycles, stating that $\ex (n, C_{2k})=\Theta(n^{1+1/k})$. Indeed, even-cycles are theta graphs with two disjoint paths, and $\ex (n,\theta_{k,\ell})=\Theta(n^{1+1/k})$ for large $\ell$. So the even-cycles conjecture suggests that for paths rooted at leaves, the transition happens already at $\ell=2$. Recently, Verstra\"ete and Williford~\cite{verstraete2018} showed that $\ex(n , \theta_{4,3}) = \Theta(n^{5/4})$, giving an evidence to the even-cycle conjecture for $C_8$.

\bibliographystyle{abbrv}
\bibliography{references-ecd}

\begin{thebibliography}{10}

\bibitem{alon1999}
N.~Alon, L.~R\'onyai, and T.~Szab\'o.
\newblock Norm-graphs: variations and applications.
\newblock {\em J. Combin. Theory Ser. B}, 76(2):280--290, 1999.

\bibitem{bukh2015}
B.~Bukh.
\newblock Random algebraic construction of extremal graphs.
\newblock {\em Bull. Lond. Math. Soc.}, 47(6):939--945, 2015.

\bibitem{bukh2018}
B.~Bukh and D.~Conlon.
\newblock Rational exponents in extremal graph theory.
\newblock {\em J. Eur. Math. Soc.}, 20:1747--1757, 2018.

\bibitem{2018Bukhtait}
B.~{Bukh} and M.~{Tait}.
\newblock {Tur\'{a}n number of theta graphs}.
\newblock {\em arXiv preprint arXiv:1804.10014}, Apr. 2018.

\bibitem{conlon2014}
D.~Conlon.
\newblock Graphs with few paths of prescribed length between any two vertices.
\newblock {\em Bull. Lond. Math. Soc.}, 2014.
\newblock to appear.

\bibitem{2018conlonlee}
D.~{Conlon} and J.~{Lee}.
\newblock {On the extremal number of subdivisions}.
\newblock {\em arXiv preprint arXiv:1807.05008}, July 2018.

\bibitem{erdos1981}
P.~Erd\H{o}s.
\newblock On the combinatorial problems which {I} would most like to see
  solved.
\newblock {\em Combinatorica}, 1(1):25--42, 1981.

\bibitem{erdos1988}
P.~Erd\H{o}s.
\newblock Problems and results in combinatorial analysis and graph theory.
\newblock In {\em Proceedings of the {F}irst {J}apan {C}onference on {G}raph
  {T}heory and {A}pplications ({H}akone, 1986)}, volume~72, pages 81--92, 1988.

\bibitem{erdos-simonovits1966}
P.~Erd\H{o}s and M.~Simonovits.
\newblock A limit theorem in graph theory.
\newblock {\em Studia Sci. Math. Hungar.}, 1:51--57, 1966.

\bibitem{erdos1970}
P.~Erd\H{o}s and M.~Simonovits.
\newblock Some extremal problems in graph theory.
\newblock pages 377--390, 1970.

\bibitem{erdos1946}
P.~Erd\H{o}s and A.~H. Stone.
\newblock On the structure of linear graphs.
\newblock {\em Bull. Amer. Math. Soc.}, 52:1087--1091, 1946.

\bibitem{faudree1983}
R.~J. Faudree and M.~Simonovits.
\newblock On a class of degenerate extremal graph problems.
\newblock {\em Combinatorica}, 3(1):83--93, 1983.

\bibitem{2016fitch}
M.~{Fitch}.
\newblock {Rational exponents for hypergraph Turan problems}.
\newblock {\em arXiv preprint arXiv:1607.05788}, July 2016.

\bibitem{fox2011}
J.~Fox and B.~Sudakov.
\newblock Dependent random choice.
\newblock {\em Random Structures Algorithms}, 38(1-2):68--99, 2011.

\bibitem{frankl86}
P.~Frankl.
\newblock All rationals occur as exponents.
\newblock {\em J. Combin. Theory Ser. A}, 42(2):200--206, 1986.

\bibitem{furedi2013}
Z.~F\"uredi and M.~Simonovits.
\newblock The history of degenerate (bipartite) extremal graph problems.
\newblock In {\em Erd\"os centennial}, volume~25 of {\em Bolyai Soc. Math.
  Stud.}, pages 169--264. J\'anos Bolyai Math. Soc., Budapest, 2013.

\bibitem{Janzer2018}
O.~{Janzer}.
\newblock {Improved bounds for the extremal number of subdivision}.
\newblock {\em ArXiv e-prints}, 2018.

\bibitem{2018jiang}
T.~{Jiang}, J.~{Ma}, and L.~{Yepremyan}.
\newblock {On Tur\'{a}n exponents of bipartite graphs}.
\newblock {\em arXiv preprint arXiv:1806.02838}, June 2018.

\bibitem{kollar1996}
J.~Koll\'ar, L.~R\'onyai, and T.~Szab\'o.
\newblock Norm-graphs and bipartite {T}ur\'an numbers.
\newblock {\em Combinatorica}, 16(3):399--406, 1996.

\bibitem{kovari1954}
T.~K\"ovari, V.~T. S\'os, and P.~Tur\'an.
\newblock On a problem of {K}. {Z}arankiewicz.
\newblock {\em Colloquium Math.}, 3:50--57, 1954.

\bibitem{mantel1907}
W.~Mantel.
\newblock Problem 28.
\newblock {\em Wiskundige Opgaven}, 10(60-61):320, 1907.

\bibitem{verstraete2018}
J.~Verstra\"ete and J.~Williford.
\newblock Graphs without {T}heta subgraphs.
\newblock {\em Journal of Combinatorial Theory, Series B}, 2018.

\end{thebibliography}

\appendix
\section{Blow-up of a balanced rooted tree with radius three}

In this appendix, we present the proof of Theorem~\ref{thm-T47}. As $T_{4,7}$ is a balanced rooted tree, by Lemma~\ref{thm:lowerbound}, it suffices to prove the following proposition.
\begin{proposition}\label{prop:t_{4,7}}
	For each $\ell \in \mathbb{N}$, we have $\ex(n , T_{4,7}^\ell) = O(n^{10/7})$.
\end{proposition}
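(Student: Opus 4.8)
The plan is to mimic the proof of Lemma~\ref{lem:mult_main} but with \emph{four} levels of neighbourhoods instead of the two-ish layers used for $D_{t-1,s-1}^\ell$, reflecting the fact that $T_{4,7}$ has radius three. First I would pass, via the Erd\H{o}s--Simonovits degree-regularisation theorem from~\cite{erdos1970} together with the usual bipartite-halving trick, from an $n$-vertex graph with $\Omega(n^{10/7})$ edges to an $\widetilde n$-vertex bipartite graph $G$ with minimum degree at least $Q \widetilde n^{1-3/7} = Q\widetilde n^{4/7}$ for a large constant $Q$; it then suffices to embed $T_{4,7}^\ell$ into such a $G$. Set $d := \widetilde n^{4/7}$, so that the key arithmetic identities are $d^{2}/\widetilde n = \widetilde n^{1/7}$ and $d^{3}/\widetilde n^{2} = 1$ (cf.~\eqref{eq:dn}); these control how fast the candidate sets shrink as we descend through the levels. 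I would recall the structure of $T_{4,7}$: a path $v_1v_2v_3v_4$ of non-root vertices, with root leaves attached so that $v_1$ and $v_4$ each receive some root leaves and the interior vertices receive the rest, for a total of $7$ edges and $4$ non-root vertices, giving $\rho(T_{4,7}) = 7/4$ and hence the target exponent $2 - 4/7 = 10/7$.

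The core of the argument builds a nested sequence of levels $L_0 \supseteq$-style: pick a vertex (or small set) $L_0$ playing the role of the image of $v_1$ and its root cluster, then $L_1 \subseteq \Gamma(L_0)$, $L_2 \subseteq \Gamma(L_1)$, $L_3 \subseteq \Gamma(L_2)$, $L_4\subseteq \Gamma(L_3)$, each substantially larger than the previous, with all $L_i$ pairwise disjoint since $G$ is bipartite (so the parity of $i$ controls which side of the bipartition $L_i$ lies in). The non-root vertices $v_1,v_2,v_3,v_4$ of the $\ell$ copies get embedded along the path $L_0\to L_1\to L_2\to L_3$ (or a similar assignment), and the root clusters at $v_1,\dots,v_4$ get embedded into suitable common-neighbourhood sets; since we are taking $\ell$ copies glued along \emph{all} roots, at each stage I would need, as in Claim~\ref{cl: Gamma} and Claim~\ref{claim:repetition}, to iterate an embedding step $\ell$-many-ish times while maintaining candidate sets of the right size. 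The two workhorses are Lemma~\ref{lem:drc} (dependent random choice) — used to pass from "many vertices of high degree" to "a large set all of whose small subsets have large common degree" — and Lemma~\ref{lem:embed}, which converts such a configuration directly into a copy of $T_{4,7}^\ell$ (after noting $T_{4,7}^\ell$ sits inside an appropriate $D^{\ell}$-type host, or by a direct ad hoc embedding). At each level, if dependent random choice fails to produce the required large, high-common-degree set, then Lemma~\ref{lem:embed} already yields the forbidden $T_{4,7}^\ell$, a contradiction; so the expansion estimates \eqref{eq:Hall cond}-style hold, and Hall's theorem (Lemma~\ref{lem:star}) supplies the disjoint stars connecting consecutive levels.

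The main obstacle — and the reason the paper only claims this for radius three and flags the general $H_{t,s}$ case as an open \emph{Problem} — is controlling the sizes of the deeper neighbourhood levels: one needs that the $i$-th neighbourhood of a typical vertex (or of the current candidate set) has size comparable to $d^{i}$ divided by the appropriate power of $\widetilde n$, i.e.~proportional to the $i$-th power of the average degree, and this is genuinely hard to guarantee for $i=3$ without a clean argument. Concretely, the delicate step is showing that after descending to $L_3$ and $L_4$, the candidate sets for the root clusters attached to $v_4$ (the far end of the path) are still polynomially large — this is where the exponent $10/7$ is exactly tight and where the bookkeeping in the repeated application of a Claim~\ref{claim:repetition}-type lemma is tightest. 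I would expect to spend most of the real work setting up an auxiliary-bipartite-graph argument (as in the proof of Claim~\ref{claim:repetition}) that simultaneously chooses the image of the $v_4$-cluster and certifies that the back-propagated candidate set for the $v_3\to v_2$ connections does not shrink below $\ell$. Everything else — the regularisation reduction, the disjointness of levels, the final assembly of the $\ell$ glued copies — is routine given the lemmas already established, so the write-up would isolate this size-control step as the single nontrivial claim and grind the constants there, deferring the rest to "an easy check as in Lemma~\ref{lem:mult_main}".
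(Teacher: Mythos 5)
Your high-level plan — regularise to a bipartite graph of large minimum degree, build a nested chain of neighbourhood levels, descend by a dependent-random-choice-style step, and track how candidate sets shrink — matches the skeleton of the paper's proof. But there are concrete errors and a missing idea that together constitute a genuine gap. First, the arithmetic: the correct normalisation is $d := n^{3/7}$ (the average degree threshold for an exponent of $10/7$ is $n^{10/7-1}$), not $n^{4/7}$, and the governing identity is $d^2/n = n^{-1/7}$ — a \emph{shrinkage} of $n^{-1/7}$ per descent step — not $d^2/n=n^{1/7}$ or $d^3/n^2=1$ as you claim. Starting from a set of size $\ell d = \ell n^{3/7}$ and applying three descent steps gives $\ell n^{3/7}\cdot n^{-3/7}=\ell$, which is exactly tight; with your $d=n^{4/7}$ the bookkeeping does not close. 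Second, your fallback that ``$T_{4,7}^\ell$ sits inside an appropriate $D^\ell$-type host'' is false: $D_{t,s}$ always has radius two while $T_{4,7}$ has radius three, and one can check directly that $T_{4,7}^\ell$ (for $\ell\ge 2$) is not a subgraph of $D_{3,1}^{\ell'}$ for any $\ell'$, the only $D$-type candidate with the right exponent. So neither Lemma~\ref{lem:drc} nor Lemma~\ref{lem:embed} can be applied off the shelf.

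The missing idea is the mechanism that replaces Lemma~\ref{lem:embed}. In $T_{4,7}^\ell$ the four non-root vertices of each copy form a \emph{path} $v_1v_2v_3v_4$, so the embedding must produce $\ell$ pairwise vertex-disjoint paths of length three between the candidate sets at levels $0$ and $3$, with the $i$-th vertex of each path lying in the neighbourhood of a fixed anchor $u_i$ (the image of the root $r_i$). The paper handles this by proving two bespoke lemmas: Lemma~\ref{lem:expand}, a tailored DRC statement which directly constructs a copy of $T_{4,7}^\ell$ from a local density violation, and Claim~\ref{cl: clclaim}, a one-level descent step that produces not just a shrunken candidate set but also a new anchor vertex $u$ together with a \emph{bijection} $f$ (a partial perfect matching) between the old and new candidate sets. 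Iterating Claim~\ref{cl: clclaim} three times and then pulling the matchings back from level $3$ to level $0$ yields the $\ell$ disjoint anchored paths. Your sketch defers ``the final assembly of the $\ell$ glued copies'' as routine, but this matching-tracking is precisely the new ingredient that the $D_{t,s}^\ell$ proof does not contain, and without it the assembly fails. You correctly diagnose that size control through three levels is the hard part, but the object that has to be controlled is the matching-compatible candidate set, not merely a common neighbourhood as in Claim~\ref{claim:repetition}.
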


To prove Proposition~\ref{prop:t_{4,7}}, we need a variant of dependent random choice.

\begin{lemma}\label{lem:expand}
	Let $\ell \in \mathbb{N}$ and $G$ be a graph.
	Let $u_1 \in V(G)$, $A\subseteq \Gamma_{G}(u_1)$ and $B\subseteq \Gamma_G(A)\setminus \{u_1 \}$. If $e(A,B)\geq 4\ell |A|^2$ and $|B|\leq \frac{e(A,B) }{10\ell}$, then $G$ contains $T_{4,7}^{\ell}$ as a subgraph.
\end{lemma}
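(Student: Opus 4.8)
The plan is to let $u_1$ be the image of the root $r_1$ of $T_{4,7}^{\ell}$. Recall that $T_{4,7}$ is the path $v_1v_2v_3v_4$ with a pendant leaf $r_i$ attached to each $v_i$, and that in $T_{4,7}^{\ell}$ the leaves $r_1,\dots,r_4$ are shared across all $\ell$ copies. Since the hypotheses only guarantee edges between $u_1$ and $A$ and edges between $A$ and $B$, once $u_1$ is placed at $r_1$ any embedding built from these edges is forced to alternate: each $v_1^j$ and $v_3^j$, as well as $r_2$ and $r_4$, must go to $A$, while each $v_2^j$ and $v_4^j$, as well as $r_3$, must go to $B$. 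The edges $u_1v_1^j$ then come for free from $A\subseteq\Gamma_G(u_1)$, and since each $v_1^j$ is a leaf of $T_{4,7}$ it only needs one more neighbour. So it suffices to produce common vertices $\hat r_2,\hat r_4\in A$, $\hat r_3\in B$, and $\ell$ vertex-disjoint paths $\hat v_1^j\hat v_2^j\hat v_3^j\hat v_4^j$ alternating between $A$ and $B$, with $\hat v_2^j\sim\hat r_2$, $\hat v_3^j\sim\hat r_3$, $\hat v_4^j\sim\hat r_4$, and all $4\ell+3$ of these vertices distinct.

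The opening moves are by averaging. From $|B|\le e(A,B)/(10\ell)$ the bipartite graph between $A$ and $B$ has average $B$-degree at least $10\ell$, so $|A|\ge 10\ell$. Delete from $B$ every vertex with fewer than $e(A,B)/(2|B|)\ge 5\ell$ neighbours in $A$; this destroys at most $e(A,B)/2$ edges, so the remaining part, still denoted $B$, has $e(A,B)\ge 2\ell|A|^2$ and every vertex of it has at least $5\ell$ neighbours in $A$. Averaging over $A$ gives $\hat r_2\in A$ with at least $2\ell|A|$ neighbours in $B$; set $B_2:=N_G(\hat r_2)\cap B$, so $|B_2|\ge 2\ell|A|$, every vertex of $B_2$ still has at least $5\ell$ neighbours in $A$, and $e(A,B_2)\ge 5\ell|B_2|$. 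Since $\hat r_2$ alone contributes $|B_2|$ to $e(A,B_2)$, a further averaging over $A\setminus\{\hat r_2\}$ produces $\hat r_4\in A\setminus\{\hat r_2\}$ with at least $8\ell^2$ neighbours in $B_2$; call this set $P$, so $P\subseteq B_2$, each vertex of $P$ is adjacent to both $\hat r_2$ and $\hat r_4$ and has at least $5\ell$ neighbours in $A$. The images $\hat v_2^j,\hat v_4^j$ will be taken from $P$ (the extra edges to $\hat r_2,\hat r_4$ are harmless), and since every vertex of $P$ has many $A$-neighbours, the leaves $\hat v_1^j$ can be appended at the very end, so they may be ignored from now on.

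The crux, which I expect to be the main obstacle, is to find $\hat r_3\in B$ together with $\ell$ pairwise-disjoint paths $\hat v_2^j\hat v_3^j\hat v_4^j$ with $\hat v_3^j\in A\cap N_G(\hat r_3)$ and $\hat v_2^j,\hat v_4^j\in P$. The obstruction is that we have \emph{no} lower bound on the degree of an individual vertex of $A$ into $B$, so the dependent-random-choice step cannot be run from the $A$-side in the usual fashion; it must be routed through the degree-controlled set $P$. Concretely, from $e(A,P)\ge 5\ell|P|$ and the convexity bound $\sum_{a\in A}d_G(a,P)^2\ge e(A,P)^2/|A|$ one controls the number of pairs $(b,b')\in P\times P$ having a common neighbour in $A$; weighting each such pair-with-centre $a$ by the number of $r_3\in B$ adjacent to $a$ and averaging over $r_3\in B$, one extracts $\hat r_3\in B$ for which $A_3:=\{a\in A\cap N_G(\hat r_3):d_G(a,P)\ge 2\ell\}$ has size at least $\ell$. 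This last point needs a case distinction according to the relative sizes of $|A|$ and $|P|$: when $|P|$ is large relative to $|A|$ the averaging over $r_3$ is clean, while when $|A|$ dominates one instead uses that almost every vertex of $A$ is then heavily joined to $B$ (and one separates the pool for $\hat v_2^j$ from that for $\hat v_4^j$) — this is the delicate, constant-chasing part. Given such $A_3$ (and noting $\hat r_3\notin P$ may be assumed), one finishes greedily: for $j=1,\dots,\ell$ take an unused $\hat v_3^j\in A_3$, choose distinct unused vertices $\hat v_2^j,\hat v_4^j$ among its at least $2\ell$ neighbours in $P$ (fewer than $2\ell$ of them have been used in earlier steps), and attach a fresh leaf $\hat v_1^j$ among the at least $5\ell$ neighbours of $\hat v_2^j$ in $A$. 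This yields a copy of $T_{4,7}^{\ell}$.
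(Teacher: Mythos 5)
Your reading of the structure of $T_{4,7}^\ell$ and the target embedding pattern ($v_1^j, v_3^j, r_2, r_4 \to A$; $v_2^j, v_4^j, r_3 \to B$) matches the paper exactly, and your preparatory averaging steps through $\hat r_2, \hat r_4$ and the common-neighbourhood pool $P$ are correctly computed. However, you have reversed the order in which the shared vertices are fixed, and this is precisely what creates the obstacle you then cannot get past. The paper fixes $\hat r_3$ \emph{first}: restrict to $B' := \{b \in B : d(b,A) \ge 2\ell+2\}$ (so $e(A,B') \ge e(A,B)/2$), pick $u \in B'$ uniformly at random, let $X := \Gamma(u,A)$, and run dependent random choice to prune $X$ down to $X'$ with $|X'| \ge 2\ell + 2$ so that \emph{every} pair in $\binom{X'}{2}$ has at least $2\ell+1$ common neighbours in $B'$. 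Then $\hat r_3 := u$, and $\hat r_2, \hat r_4$, and all $\ell$ images of $v_3^j$ are taken inside $X'$. The pairwise common-neighbourhood guarantee on $X'$ directly supplies the images of $v_2^j$ (common neighbour of $\hat r_2, \hat v_3^j$) and $v_4^j$ (common neighbour of $\hat r_4, \hat v_3^j$) in $B'$, and the high-$A$-degree guarantee on $B'$ supplies the pendant leaves $v_1^j$. Everything closes without any case split on $|A|$ versus $|P|$.

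By contrast, in your order you must, after committing to $\hat r_2, \hat r_4$ and $P$, produce $\hat r_3 \in B$ whose $A$-neighbourhood contains $\ell$ vertices each with $\ge 2\ell$ neighbours in $P$. You explicitly flag this as the crux and do not carry it out; and in fact it genuinely breaks in the regime you identify. The set $A^* := \{a \in A : d(a,P) \ge 2\ell\}$ satisfies only $|A^*| \ge 5\ell - 2\ell|A|/|P|$, which can be nonpositive when $|A| \gg |P| \approx \ell^2$, a regime fully compatible with the hypotheses. Your convexity/weighting idea is sound only when $|P|$ is comparable to $|A|$, and the other case is left at the level of ``one instead uses that almost every vertex of $A$ is then heavily joined to $B$,'' which is not a consequence of the hypotheses (the $A$-side degrees into $B$ are controlled only on average, so they can be arbitrarily concentrated). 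So this is a genuine gap, not a routine calculation you omitted; the paper's single application of dependent random choice from $B'$ into $A$ is the idea that makes the lemma go through.
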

\begin{proof}
	Let $r_1,r_2,r_3,r_4$ be the four root vertices of $T_{4,7}^{\ell}$ in such a way that distance between $r_i$ and $r_{i+1}$ is three for each $i\in [3]$.
	For each $i\in [4]$, let $Z_i:=\Gamma_{T_{4,7}^{\ell}}(r_i)$.
	
	Let $B':= \{ b\in B: d_{G}(b,A) \geq 2\ell+2\}$.
	Then we have 
	$$ e(A,B') \geq e(A,B)- (2\ell+1)|B| \geq e(A,B)/2.$$
	
	We choose $u\in B'$ uniformly at random and let $X:=\Gamma_{G}(u,A)$.
	We say a pair $P\in \binom{A}{2}$ is \emph{bad} if $|N_{B'}(P)|\leq 2\ell$.
	Let $Y$ be the expected number of bad pairs in $X$. Then
	$$\mathbb{E}[Y] \leq \sum_{\text{$P$ a bad pair} } \mathbb{P}[ P\subseteq X]
	\leq \frac{2\ell}{|B'|} \binom{|A|}{2}\leq \frac{\ell |A|^2}{|B'|}.$$
	Let $X'$ be a subset of $X$ obtained by deleting an element from each bad pair in $X$. Since $|X'| \geq |X|-Y$ and $|B'|\leq |B|$, we have
	$$\mathbb{E}[|X'|] \geq \mathbb{E}[|X|] - \mathbb{E}[Y] \geq \frac{e(A,B')}{|B'|} -  \frac{\ell |A|^2}{|B'|} \geq \frac{e(A,B)}{4|B'|} \geq 2\ell+2.$$
	Hence, there exist a vertex $u_3\in B'$ and a set $X'\subseteq \Gamma_{G}(u_3)$ with $|X'|\geq 2\ell+2$ such that every pair $P\in \binom{X'}{2}$ has at least $2\ell + 1$ common neighbors in $B'$.
	
	Now, we construct an embedding $\phi$ of $T_{4,7}^{\ell}$ into $G$. We arbitrarily choose two vertices $u_2, u_4, \in X'$ and a subset $U_3$ of $X'\setminus \{u_2,u_4\}$ with $|U_3|=\ell$.
	Let $\phi(r_i)= u_i$ for each $i\in [4]$ and assign $\phi$ in an arbitrary way that $\phi(Z_3) = U_3\subseteq \Gamma_{G}(u_3)$. Note that $\phi$ is injective as $|Z_3|=|U_3|$ and $u_2,u_4\notin U_3$.
	
	For each $i\in \{2,4\}$ and each $x\in Z_i$, let $z_x \in Z_3$ be the unique neighbor of $x$ in $Z_3$. As $\phi(z_x)\in U_3 \subseteq X'$, we have $d_{G}(\{u_i, \phi(z_x)\},B ) \geq 2\ell+1$, we can define $\phi(x)$ in such a way that $\phi(x) \in N_{G}(\{u_1,\phi(z_x)\})\setminus\{u_3\}$ and $\phi$ is still injective. This is possible since the number of neighbours of $r_2$ or $r_4$ is $2\ell$.
	
	For each $x\in Z_1$, let $z'_x$ be the unique neighbour of $x$ in $Z_2$.
	We choose $\phi(x)$ from $\Gamma_{G}(\phi(z'_x), A) \setminus (U_3\cup \{u_2,u_4\})$ in such a way that $\phi$ is injective on $Z_1$. Since $|Z_1|=\ell$, it is possible by the definition of $B'$ as we have $\phi(z'_x)\in B'$. Since every vertex in $A$ is adjacent to $u_1 =\phi(r_1)$, this $\phi$ embeds a copy of $T_{4,7}^{\ell}$ into $G$.
\end{proof}

\begin{proof}[Proof of Proposition~\ref{prop:t_{4,7}}]
	Consider the numbers $n,q$ such that 
	$$ n \gg  q \gg \ell.$$
	Let $d:= n^{3/7}$. As before, it suffices to prove that an $n$-vertex graph $G$ with $\delta(G)\geq qd$ contains $T_{4,7}^{\ell}$ as a subgraph.
	The following claim will be useful for us.
	
	\begin{claim}\label{cl: clclaim}
		Suppose that we are given a vertex $u_0\in V(G)$ and subsets $A^{\#}\subseteq \Gamma_{G}(u_0)$ with $|A^{\#}|\leq \ell d$
		and $C\subseteq V(G)\setminus A^{\#}$ with $|C|\leq 10\ell d$.
		Then, there exist a vertex $u\in V(G)\setminus (A^{\#}\cup C \cup \{u_0\})$, sets $A\subseteq A^{\#}$, $B\subseteq \Gamma_{G}(u) \setminus (A^{\#} \cup C\cup \{u_0\})$ and a bijective function $f:B\rightarrow A$ satisfying the following. 
		For each $a\in A$, we have $f(a)a \in E(G)$ and $|A|=|B|\geq n^{-1/7} |A^{\#}|$.
	\end{claim}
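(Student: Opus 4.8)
The plan is to reduce Claim~\ref{cl: clclaim} to finding a single vertex whose neighbourhood matches well into $A^{\#}$, and then to locate that vertex by counting walks of length two.

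Set $V^{\circ}:=V(G)\setminus(A^{\#}\cup C\cup\{u_0\})$. Since $\delta(G)\ge qd$, $|A^{\#}|\le\ell d$, $|C|\le 10\ell d$ and $q\gg\ell$, every $w\in V^{\circ}$ satisfies $d_G(w,V^{\circ})\ge qd-11\ell d-1\ge qd/2$, and every $a\in A^{\#}$ likewise has $d_G(a,V^{\circ})\ge qd/2$, so $e(V^{\circ},A^{\#})\ge\frac{1}{2} qd\,|A^{\#}|$. It then suffices to find $u\in V^{\circ}$ such that the bipartite graph $H_u:=G[\Gamma_G(u)\cap V^{\circ},\,A^{\#}]$ contains a matching of size at least $n^{-1/7}|A^{\#}|$: the two sides of such a matching play the roles of $B\subseteq\Gamma_G(u)\cap V^{\circ}$ and $A\subseteq A^{\#}$, the induced bijection is $f$, the matching edges witness the required incidence, and $B$ avoids $A^{\#}\cup C\cup\{u_0\}$ since $B\subseteq V^{\circ}$. (If $G$ were a blow-up of a bounded graph one could take $|A|=|A^{\#}|$; the factor $n^{-1/7}$ is essentially what a quasirandom host forces, and thus guides the target.)

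To find $u$, I would double-count the walks $u\,w\,a$ with $u,w\in V^{\circ}$, $a\in A^{\#}$ and $uw,wa\in E(G)$. Summing over the midpoint $w$, and using that any $w$ with $d_G(w,A^{\#})\ge 1$ has $d_G(w,V^{\circ})\ge qd/2$, their number is at least $\frac{1}{2} qd\sum_{w\in V^{\circ}}d_G(w,A^{\#})=\frac{1}{2} qd\,e(V^{\circ},A^{\#})\ge\frac{1}{4} q^2d^2|A^{\#}|$. Averaging over the at most $n$ choices of $u\in V^{\circ}$ gives a vertex $u$ lying on at least $\frac{q^2d^2}{4n}|A^{\#}|=\frac{q^2}{4}n^{-1/7}|A^{\#}|$ of these walks, using $d^2/n=n^{-1/7}$. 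Fix such a $u$. If $H_u$ has a matching of size $n^{-1/7}|A^{\#}|$, we are done; otherwise, the deficiency form of Hall's theorem yields $S\subseteq A^{\#}$ with $|S|>(1-n^{-1/7})|A^{\#}|$ and $|N_{H_u}(S)|<n^{-1/7}|A^{\#}|$, which forces almost all of the walks out of $u$ through a small set of midpoints, or to a small set of endpoints. In either case some vertex is \emph{congested}: either some $w$ has degree $\Omega(q^2)$ into $A^{\#}$, or some $a\in A^{\#}$ has $\Omega(q^2)$ common neighbours with $u$ inside $V^{\circ}$.

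I would then turn such a congested configuration into a copy of $T_{4,7}^{\ell}$ in $G$, by feeding it into the dependent-random-choice lemma (Lemma~\ref{lem:drc}) together with the embedding lemma (Lemma~\ref{lem:embed}), in the style of the proof of Claim~\ref{claim:repetition}; since Proposition~\ref{prop:t_{4,7}} only asks us to exhibit one such copy, this case does no harm, and we may assume the favourable case, obtaining $u,A,B,f$ with $|A|=|B|\ge n^{-1/7}|A^{\#}|$. The hard part will be making this last step rigorous: the plain averaging above loses a polynomial factor when $G$ has a few vertices of very large degree, so one should first restrict to a roughly regular part of $G$ — or equivalently run the whole argument under the standing hypothesis that $G$ is $T_{4,7}^{\ell}$-free and derive a contradiction from the congestion — and then amplify a single congested configuration into the $\ell$ disjoint copies needed for $T_{4,7}^{\ell}$. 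The rest is routine bookkeeping: that $q\gg\ell$ absorbs the lower-order terms, that $d=n^{3/7}$ makes $d^2/n=n^{-1/7}$, and that the matching sides meet the stated disjointness conditions.
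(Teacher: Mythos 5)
Your reduction to finding a vertex $u$ together with a large matching in the bipartite graph $H_u := G[\Gamma_G(u)\cap V^{\circ},\,A^{\#}]$ is a legitimate reformulation of the claim's conclusion, but the walk-counting step does not produce such a $u$, and the gap you defer as ``routine bookkeeping'' is in fact the entire content of the claim. An edge count $e(H_u)\gtrsim q^2 n^{-1/7}|A^{\#}|$ is much too weak to force a matching of size $n^{-1/7}|A^{\#}|$: the degrees on the $\Gamma_G(u)$-side of $H_u$ are only bounded by $|A^{\#}|\le \ell d$ and those on the $A^{\#}$-side by nothing at all, so K\H{o}nig (or the deficiency form of Hall you invoke) only hands you a single vertex of $H_u$-degree roughly $q^2$. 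Unwinding this, you obtain two vertices of $G$ with about $q^2$ common neighbours, i.e.\ a $K_{2,\Omega(q^2)}$ — a \emph{depth-two} structure. Feeding this into Lemma~\ref{lem:drc} and Lemma~\ref{lem:embed} produces radius-two objects in the style of $D_{t,s}^{\ell}$, not the radius-three tree $T_{4,7}^{\ell}$ you need to forbid; and you say nothing about how to amplify one congested configuration into the $\ell$-fold root-identified blow-up. So the contradiction you are counting on in the congestion case is not available, and the favourable case has not been secured.

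The paper's proof never asks for a matching inside a single $H_u$; instead it \emph{pre-builds} an injective choice function. Using the ad hoc ``expansion or $T_{4,7}^{\ell}$'' dichotomy of Lemma~\ref{lem:expand}, it first proves that every $S\subseteq A^{\#}$ satisfies $|\Gamma_G(S,B^{\#})|\ge d|S|$ and then applies Hall via Lemma~\ref{lem:star} to extract pairwise \emph{disjoint} sets $\Gamma_a\subseteq \Gamma_G(a,B^{\#})$, each of size $d$, one per $a\in A^{\#}$. A second invocation of Lemma~\ref{lem:expand} gives $|U_a|\ge d^2$ for $U_a:=\Gamma_G(\Gamma_a)\setminus(A^{\#}\cup C\cup\{u_0\})$. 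Only then does an averaging take place, in the auxiliary bipartite graph where $a\sim w$ iff $w\in U_a$: this produces $u$ with at least $d^2|A^{\#}|/n=n^{-1/7}|A^{\#}|$ neighbours $a$, and for each such $a$ one simply takes $f(a)\in\Gamma_a\cap\Gamma_G(u)$. Injectivity of $f$ comes for free from the disjointness of the $\Gamma_a$, so no matching theorem is needed at the end, and every failure of expansion along the way is channelled directly into a copy of $T_{4,7}^{\ell}$ via Lemma~\ref{lem:expand}. The two ideas you are missing are precisely this pre-allocated disjoint family $\{\Gamma_a\}$ and the two-stage depth-two expansion, both of which are what make the radius-three tree tractable.
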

	\begin{proof}
		Let $B^{\#} := \Gamma_G(A^{\#}) \setminus  (C\cup \{ u_0  \})$. 
		As $\delta(G)\geq qd$, for each $v\in V(G)$, we have 
		\begin{align}\label{eq: mindegr}
		|\Gamma_{G}(v)\setminus (A^{\#}\cup C\cup \{u_0\}) | \geq qd-10\ell d-|A^{\#}|-1 \geq qd/2.
		\end{align}
		
		For each $S\subseteq A^{\#}$, let $B_S:=\Gamma_G(S, B^{\#}) $.
		We claim that for each $S\subseteq A^{\#}$, we have
		\begin{align}\label{eq: qdS Hall}
		|B_S | \geq d |S|.
		\end{align}
		To show this, assume that we have a non-empty set $S\subseteq A^{\#}$ with $|B_S | < d |S|$.
		Since $|S| \leq |A^{\#}| \leq \ell d$ and $q\gg \ell$, by \eqref{eq: mindegr}, we have
		$$e(S,B_S) \geq \sum_{v\in S} d_{G}(v,B^{\#}) \geq qd|S|/2 \geq 4\ell |S|^2,$$
		and $|B_S| < d|S| \leq \frac{qd|S|}{20\ell} \leq \frac{e(S,B_S)}{10 \ell}$.
		Hence, we can apply Lemma~\ref{lem:expand} to $G$ with $u_0, S, B_S$ and $\ell$ playing the roles of $u_1, A, B$ and $\ell$ respectively to obtain a copy of $T_{4,7}^{\ell}$ in $G$, a contradiction. Hence \eqref{eq: qdS Hall} holds for all non-empty subset $S$ of $A^{\#}$.
		
		Thus  Lemma~\ref{lem:star} implies that there exists a collection $\{ \Gamma_a \subseteq \Gamma_{G}(a,B^{\#}): a\in A^{\#}\}$  of pairwise disjoint sets such that $|\Gamma_a| = d$ for all $a\in A^{\#}$.
		
		For each $a\in A^{\#}$, let $U_a:= \Gamma_G(\Gamma_a) \setminus (A^{\#}\cup C\cup \{u_0\})$.
		We claim that for each $a\in A^{\#}$, we have
		\begin{align}\label{eq: Ua size}
		|U_a| \geq d |\Gamma_a|.
		\end{align}
		Suppose there exists a vertex $a\in A^{\#}$ with $|U_a| < d |\Gamma_a|$.
		By \eqref{eq: mindegr}, for each $v\in \Gamma_a$, we have $d_{G}(v,U_a)\geq qd/2$, hence $e_G(\Gamma_a, U_a) \geq qd |\Gamma_a| /2 \geq 4\ell |\Gamma_a|^2$. Moreover, we have
		$$|U_a| < d |\Gamma_a| \leq \frac{qd|\Gamma_a|}{20\ell} \leq \frac{e_G(\Gamma_a, U_a) }{10 \ell}.$$
		Hence, we can apply Lemma~\ref{lem:expand} to $G$ with $a, \Gamma_a, U_a,$ and $\ell$ playing the roles of $u_1, A, B$, and $\ell$, respectively to obtain a copy of $T_{4,7}^{\ell}$ in $G$, a contradiction. Thus we obtain \eqref{eq: Ua size}.
		
		Let $U:= \bigcup_{a\in A^{\#} }U_a$ and consider an auxiliary biparitte graph $H$ with a vertex partition $(A^{\#},U)$ with 
		$$E(H)=\{ aw \in A^{\#}\times U : w\in U_a \}.$$
		For each $a \in A^{\#}$, \eqref{eq: Ua size} implies that $d_H(a) = |U_a| \geq d|\Gamma_a| = d^2$.
		Thus, by averaging, there exists a vertex $u\in U$ with 
		$$ d_{H}(u)\geq \frac{|E(H)|}{|U|} \geq \frac{ d^2 |A^{\#}| }{ n} \geq n^{-1/7}|A^{\#}|.$$
		Let $A:= \Gamma_{H}(u)\subseteq A^{\#}$. For each $a\in A$, 
		choose a vertex $f(a)\in \Gamma_a\cap \Gamma_{G}(u)$ which is a non-empty set by the definition of $H$ and choice of $A$. As $\{\Gamma_a:a \in A^{\#}\}$ is a collection of pairwise disjoint sets, the function $f$ is injective. Let $B:=f(A)$.
		From the construction, it is obvious that $A,B,C,\{u\}$ are pairwise disjoint.  Hence the set $A,B$ and a function $f$ are as desired. This proves the claim.
	\end{proof}

	Let us choose an arbitrary vertex $u_0 \in V(G)$ and a subset $A_0 \subseteq \Gamma_{G}(u_0)$ with $|A_0| = \ell d$.
	Let $B_0 := A_0$ and $C_0 := \emptyset$. 
	
	For each $1 \leq i \leq 3$ in order, let us apply Claim~\ref{cl: clclaim} with $u_{i-1}, B_{i-1}$ and $C_{i-1}$ where
	$$C_{i-1} := \bigcup_{j=0}^{i-2} (B_{j} \cup \{u_j \}) $$
	playing the roles of $u_0, A^{\#}$ and $C$ to obtain a vertex $u_i$, disjoint sets $A_i\subseteq B_{i-1}, B_i\subseteq \Gamma_{G}(u_i)\setminus C_{i-1}$ with $|A_i|=|B_i| \geq n^{-1/7} |B_{i-1}|$.
	and a bijective function $f_i: A_i\rightarrow B_i$ satisfying the following
	$$B_i\subseteq \Gamma_{G}(u_i) \text{ and }  a f_i(a) \in E(G) \text{ for each }a\in A_i.$$
	These repetitive applications of Claim~\ref{cl: clclaim} are possible, since we have $|C_i| \leq  3|A_0| \leq 3\ell d$.

Let $B'_3:=B_3$ and for each $i=2,1,0$ in order, we let
$B'_i:= f^{-1}_{i+1}(B'_{i+1})$. 
As $B'_i\subseteq A_i = B_{i-1}$, it follows that, for each $i\in [3]$,
	\begin{itemize}
	\item  there exists a perfect matching between $B'_{i-1}$ and $B'_i$ via $f_i$,
	\item $|B'_0|=\dots=|B'_3| \geq n^{-3/7}|A_0|$.
	\end{itemize}

	Now the sets $\{u_0\},\dots, \{u_3\}, B'_0,\dots, B'_3$ are all pairwise disjoint, and we claim that they induce a copy of $T_{4,7}^{\ell}$.	
	Indeed, $|B'_0|=\dots=|B'_3|\geq n^{-3/7}|A_0| = \ell$ and
	for each $j\in [3]$, there is a perfect matching between $B'_{j-1}$ and $B'_{j}$ as well as $B'_{i}\subseteq B_{i}\subseteq \Gamma_{G}(u_{i})$.	Hence, we obtain a copy of $T_{4,7}^{\ell}$, completing the proof.
\end{proof}

\end{document}